\newtheorem{lemma}{Lemma}[section]
\newtheorem{theorem}[lemma]{\bf  Theorem}
\newtheorem{corollary}[lemma]{ \bf Corollary}
\newtheorem{proposition}[lemma]{  \bf Proposition}
\newtheorem{remark}[]{  \bf Remark}
\newtheorem{example}[]{  \bf Example}
\newtheorem{definition}[lemma]{  \bf Definition}
\DeclareMathOperator{\diam}{diam} 
\DeclareMathOperator{\vol}{Vol}\DeclareMathOperator{\id}{id}
\DeclareMathOperator{\Exp}{Exp^c}
\DeclareMathOperator{\Sp}{Span_{\mathbb{R}}}
\DeclareMathOperator{\E}{E}
\keywords{Finsler manifold, submanifold, comparison theorem, T-curvature}
\subjclass[2010]{Primary 53B40, Secondary 53C40}
\begin{document}

\title{A comparison theorem for Finsler submanifolds and its applications}

\author{Wei Zhao}
\address{
 Center of Mathematical Sciences\\
Zhejiang University\\
Hangzhou, China}
\email{zhaowei008@yahoo.cn}
\thanks{This work was supported partially
by National Natural Science Foundation of China (Grant No.
11171297).}

\begin{abstract}
In this paper, we consider the conormal bundle over a submanifold in a Finsler manifold and establish a volume comparison theorem. As an application, we derive a lower estimate for length of closed geodesics in a Finsler manifold. In the reversible case, a lower bound of injective radius is also obtained.
\end{abstract}
\maketitle

\section{Introduction}
In Riemannian geometry it is an important subject to investigate the geometry of submanifolds. And there are many important local and global results, which in turn lead to a better understanding on Riemann manifolds\cite{Che,Fr,HK,L,PT,Wi}. For example,
the Heintze-Karcher comparison theorem\cite{HK}
plays a very important role in the global differential geometry
of Riemann manifolds, which says that the upper bound for the volume of a closed Riemannian manifold can be estimated in terms of the the volume and the mean curvature of an arbitrary closed submanifold, the diameter and a lower bound for the section curvature. One of it applications is the lower estimate on the length of sample closed geodesics in a closed Riemannian manifold\cite{IC,HK}. More precisely, if $(M,g)$ is a closed Riemannian $m$-manifold with the section curvature $\mathbf{K}\geq \delta$, $\diam(M)\leq d$ and $\vol(M)\geq V$, then for any sample closed geodesic $\gamma$ in $M$, its length satisfies
\[
L_g(\gamma)\geq \frac{(m-1)V}{c_{m-2}\mathfrak{s}_\delta^{m-1}\left(\min\left\{d,\frac{\pi}{2\sqrt{\delta}}\right\}\right)},
\]
where $c_m:=\vol(\mathbb{S}^m)$, $\pi/\sqrt{\delta}:=+\infty$ if $\delta\leq 0$, and $\mathfrak{s}_\delta(t)$ is the unique solution to
$y''+\delta y=0$ with $y(0)=0$ and $y'(0)=1$. Combining this with Kingenberg's theorem\cite{Kl}, one can obtain the injectivity radius estimate of Cheeger\cite{CH} without using Toponogov's comparison theorem. See \cite{Bu,GR,M,MJ,Sc}, etc., for more details on the Heintze-Karcher comparison theorem.

Finsler geometry, a natural generalization of Riemannian geometry, was initiated
by Finsler\cite{F} from considerations of regular problems in the calculus
of variations. Recently, the geometry of Finsler submanifolds has been developed tremendously, especially in the aspects of Finsler minimal submanifolds and Minkowski submanifolds\cite{HS,Sh1,Sh5,ST}. However, the geometry of Finsler submanifolds is much different from the one of Riemannian submanifolds. For instance, there exit totally geodesic submanifolds which are not minimal for the
Busemann-Hausdorff measure but are minimal for the Holmes-Thompson measure\cite{AB}.

Now we consider the normal bundle of a Finsler submanifold.
Let $(M,F)$ be a forward complete Finsler $m$-manifold and let $N$ be a connected $k$-dimensional submanifold of $M$, $0\leq k\leq m$. According to \cite{Ru,Sh2}, the "normal bundle" $\mathcal {V}N$ of $N$ in $M$ is defined as $\mathcal {V}N:=\cup_{x\in N}\mathcal {V}_xN$, where $\mathcal {V}_xN:=\{0\}\cup\{n\in T_xM: n\neq 0, \,g_{n}(n,X)=0, \, \forall X\in T_xN\}$.
It is a generally recognized principle that the normal bundle $\mathcal {V}N$ of $N$ in $M$ carries much geometric information. However,
in general case, $\mathcal {V}N$ is not a vector bundle but a cone bundle\cite{Ru,Sh2}. Apparently, it is rather hard to handle due to nonlinearity of $\mathcal {V}N$. Recall the Legendre transformation $\mathcal {L}:TM\rightarrow T^*M$ is a homeomorphism\cite{BCS,Sh1}. It should be remarked that $\mathcal {L}$ is a diffeomorphism
(or isomorphism) if and only if $F$ is Riemannian. The {\it conormal bundle} $\mathcal {V}^*N$ of $N$ in $M$ is defined as the homeomorphic image of $\mathcal {V}N$ under $\mathcal {L}^{-1}$. Clearly, $\mathcal {V}^*N$ coincides with the original definition in the Riemannian case.
The study of the (co-)normal bundle of a Finsler submanifold is still at its infant stage. Several people have made some fundamental contributions to this subject
from various points of view\cite{Be,Da,Ma,Ru,Sh1,Sh5}, etc.

The purpose of this paper is to investigate the conormal bundle of a Finsler submanifold and establish the Heintze-Karcher comparison theorem in Finsler geometry for both the Busemann-Hausdorff measure and the Holmes-Thompson measure.

Given $\xi\in \mathcal {V}^*N\backslash0$, let $H_\xi$ denote the {\it co-mean curvature} along $\xi$, which is given explicitly in Sec.3. First, we have the following theorem.
\begin{theorem}
Let $(M,F)$ be a closed Finsler $m$-manifold with uniform constant $\Lambda_F$ and diameter $d$ and let $N$ be a connected $k$-dimensional submanifold of $M$, $0\leq k\leq m$. Suppose the flag curvature $\mathbf{K}\geq \delta$.

(1) If $k=0$, i.e., $N=\{x\}$, then
\[
\mu(M)\leq \int_{S_xM}e^{-\tau(\dot{\gamma}_y(t))}d\nu_x(y) \int^{d}_0\mathfrak{s}^{m-1}_\delta(t)dt,
\]
where $\mu$ is any volume form on $M$ and $d\nu_x$ is the Riemannian volume on $S_xM$ induced by $g_x$.

(2) If $k\geq 1$, then
\[
\mu(M)\leq c_{m-k-1}\cdot\Lambda_F^{(3m+k)/2}\cdot\bar{\mu}(N)\cdot\int^{\min\left\{d,\,\zeta(\xi_0)\right\}}_0
\left(\mathfrak{s}'_\delta-\frac{H_{\xi_0}}{k}\mathfrak{s}_\delta\right)^k(t)\cdot\mathfrak{s}^{m-k-1}_\delta(t)dt.
\]
where $\mu$ and $\bar{\mu}$ denote the Busmann-Hausdorff volume or the Holmes-Thompson volume on $(M,F)$ and $(N,F|_N)$, respectively and $\xi_0:=\min_{\xi\in \mathcal {V}^*N\backslash 0}H_\xi$ and $\zeta(\xi_0)$ is the first positive zero of $\left(\mathfrak{s}'_\delta-\frac{H_{\xi_0}}{k}\mathfrak{s}_\delta\right)(t)$.
\end{theorem}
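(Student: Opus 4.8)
The plan is to realize $M$ as the image of the normal exponential map of $N$, to bound the Jacobian of that map pointwise using the flag-curvature hypothesis together with the Finsler index (Rauch-type) comparison of Section 3, and then to integrate; the uniform constant $\Lambda_F$ should enter only through finitely many comparisons of fundamental tensors and of the measures at hand. I would set $\mathcal{N}\colon\mathcal{V}N\to M$, $\mathcal{N}(x,n)=\exp_x(n)$. Since $(M,F)$ is compact and forward complete, every $p\in M$ satisfies $\inf_{q\in N}d(q,p)\leq\diam(M)=d$ and the infimum is attained by a unit-speed minimizing geodesic $\gamma\colon[0,\ell]\to M$ with $\gamma(0)\in N$, $\gamma(\ell)=p$, $\ell\leq d$; the first variation formula forces $g_{\dot\gamma(0)}(\dot\gamma(0),X)=0$ for all $X\in T_{\gamma(0)}N$, i.e. $\dot\gamma(0)\in\mathcal{V}_{\gamma(0)}N$. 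Hence $\mathcal{N}$ restricted to $\mathcal{D}:=\{n\in\mathcal{V}N:F(n)\leq d\}$ is onto $M$, and, discarding the $\mu$-null tangent cut locus, $\mu(M)\leq\int_{\mathcal{D}}\mathcal{N}^{*}\mu$; under the Legendre transformation $\mathcal{D}$ is identified with the corresponding part of $\mathcal{V}^*N$. When $k=0$, so $N=\{x\}$ and $\mathcal{V}_xN=T_xM$, I would use geodesic polar coordinates $v=ty$, $y\in S_xM$, $t\in[0,d]$: by the standard polar-coordinate formula for a Finsler volume, $\exp_x^{*}\mu=e^{-\tau(\dot\gamma_y(t))}\mathfrak{A}(t,y)\,dt\wedge d\nu_x(y)$, where $\mathfrak{A}$ is built from the $m-1$ normal Jacobi fields along $\gamma_y$ vanishing at $t=0$ and $e^{-\tau}$ records the gap between $\mu$ and the Jacobian-natural measure; the index comparison with $\mathbf{K}\geq\delta$ gives $\mathfrak{A}(t,y)\leq\mathfrak{s}_\delta^{m-1}(t)$ up to the cut time, and integrating over $S_xM$ and $[0,d]$ yields (1).

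For $k\geq1$ I would use Fermi-type coordinates: parametrize $\mathcal{D}$ (off the cut locus) by $(x,\xi,t)\mapsto\exp_x(tn)$, where $x\in N$, $\xi$ lies in the unit conormal sphere $S^*_xN=\{\xi\in\mathcal{V}^*_xN:F^*(\xi)=1\}$, $n\in\mathcal{V}_xN$ is the tangent vector dual to $\xi$, and $t\geq0$; by the constructions of Section 3 this is a local diffeomorphism there, with $k+(m-k-1)+1=m$ parameters. Write $\gamma(t)=\exp_x(tn)$ and let $S_\xi$ be the associated shape operator of $N$, symmetric for $g_{\dot\gamma(0)}$, with eigenvalues $\lambda_1,\dots,\lambda_k$ and trace $H_\xi$. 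The key point is that the differential of the Fermi map decomposes along the $m-1$ normal $N$-Jacobi fields of $\gamma$: the $k$ coming from moving $x$ can be taken with $J_a(0)=E_a$, $(E_a)$ a $g_{\dot\gamma(0)}$-orthonormal eigenbasis of $S_\xi$, and $(\nabla_tJ_a)(0)=-\lambda_aE_a$, so the index comparison of Section 3 gives $|J_a(t)|\leq\mathfrak{s}'_\delta(t)-\lambda_a\mathfrak{s}_\delta(t)$, while the $m-k-1$ coming from moving $\xi$ have $J_a(0)=0$ and give $|J_a(t)|\leq\mathfrak{s}_\delta(t)$. On $[0,\zeta(\xi)]$ --- which contains the range up to the first focal parameter of $\gamma$, past which $\gamma$ no longer minimizes $d(N,\cdot)$ and the point is dropped from $\mathcal{D}$ --- every $\mathfrak{s}'_\delta-\lambda_a\mathfrak{s}_\delta$ is nonnegative, so the arithmetic-geometric mean inequality gives
\[
\prod_{a=1}^{k}\big(\mathfrak{s}'_\delta(t)-\lambda_a\mathfrak{s}_\delta(t)\big)\ \leq\ \Big(\mathfrak{s}'_\delta(t)-\tfrac1k\textstyle\sum_{a}\lambda_a\,\mathfrak{s}_\delta(t)\Big)^{\!k}\ =\ \Big(\mathfrak{s}'_\delta(t)-\tfrac{H_\xi}{k}\,\mathfrak{s}_\delta(t)\Big)^{\!k}.
\]
Consequently the Jacobian of the Fermi map is at most $\big(\mathfrak{s}'_\delta-\tfrac{H_\xi}{k}\mathfrak{s}_\delta\big)^{k}(t)\,\mathfrak{s}_\delta^{m-k-1}(t)$ for $t\leq\min\{d,\zeta(\xi)\}$, with no contribution beyond.

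Integrating this pointwise bound, $\mu(M)\leq\int_{\mathcal{D}}\mathcal{N}^{*}\mu$ becomes
\[
\mu(M)\ \leq\ \int_{N}\!\int_{S^*_xN}\!\int_{0}^{\min\{d,\,\zeta(\xi)\}}\Big(\mathfrak{s}'_\delta-\tfrac{H_\xi}{k}\mathfrak{s}_\delta\Big)^{\!k}(t)\,\mathfrak{s}_\delta^{m-k-1}(t)\;\Theta(x,\xi,t)\;dt\,d\nu^*_x(\xi)\,d\bar\mu(x),
\]
where $\Theta$ gathers the conversion factors: from $\mu$ (Busemann-Hausdorff or Holmes-Thompson) to the Jacobian-natural measure, i.e. a distortion $e^{-\tau}$; from the induced measure $d\nu^*_x$ on $S^*_xN$ to the round $(m-k-1)$-sphere measure; from $\bar\mu$ on $(N,F|_N)$ to the measure used in the parametrization; and the ratios among the fundamental tensors $g_{\dot\gamma(t)}$ at different parameters that intervene when passing between the Jacobi-field norms, the shape operator $S_\xi$, and a fixed parallel reference frame, together with the ratio of $F^*$ to $F$. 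The point is that, by the definition of the uniform constant, each such ratio and each $e^{\pm\tau}$ is bounded by a fixed power of $\Lambda_F$; summing the exponents should bound $\Theta$ by $\Lambda_F^{(3m+k)/2}$, while the conormal-sphere factor contributes $c_{m-k-1}$ --- uniformly, and in exactly the same way for both volumes. Finally, since $H_\xi\geq H_{\xi_0}$ and, for each fixed $t$, the map $h\mapsto\big(\mathfrak{s}'_\delta(t)-\tfrac hk\mathfrak{s}_\delta(t)\big)^{\!k}\mathfrak{s}_\delta^{m-k-1}(t)$ is non-increasing (so $\zeta(\xi)\geq\zeta(\xi_0)$ and the extra piece of the $t$-integral has a nonnegative integrand), replacing $H_\xi$ by $H_{\xi_0}$ and $\zeta(\xi)$ by $\zeta(\xi_0)$ and extracting $c_{m-k-1}\Lambda_F^{(3m+k)/2}\bar\mu(N)$ yields the inequality in (2).

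The main difficulty is twofold. Conceptually, $\mathcal{V}N$ is only a cone bundle and the Legendre transformation only a homeomorphism, so no differentiable bundle construction is available on $\mathcal{V}N$ itself; one must work fibrewise with the conormal spheres $S^*_xN$ and verify, using Section 3, that the Fermi map is a genuine local diffeomorphism off the cut locus and that the focal-point/minimization argument carries over to the Finsler setting. The harder, more technical part is the $\Lambda_F$-bookkeeping of the last step: pinning down exactly which metric and measure comparisons occur, checking that their exponents sum precisely to $(3m+k)/2$, and that the Busemann-Hausdorff and the Holmes-Thompson normalizations both fit inside this single bound. The Jacobi-field comparison itself is, by contrast, routine once the framework of Section 3 is available.
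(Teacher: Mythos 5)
Your overall plan is the same as the paper's: parametrize $M$ (off a $\mu$-null set) by the conormal exponential map in Fermi coordinates $(t,u^\alpha,\theta_\mathfrak{g})$, bound the Jacobian of this map via index comparison under $\mathbf{K}\geq\delta$, apply AM--GM to trade the eigenvalues of the shape operator for the mean curvature $H_\xi$, replace $H_\xi$ by $H_{\xi_0}$ by monotonicity, and finally use the uniform constant to compare the Jacobian-natural density, the conormal-sphere measure, and $\bar\mu$. This is exactly Sections 4--5 and Theorem 4.8 of the paper.

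However, there is a genuine gap in the step that produces the Jacobian bound. You propose to take the $k$ ``tangential'' $N$-Jacobi fields with $J_a(0)=E_a$, $(\nabla_tJ_a)(0)=-\lambda_aE_a$, deduce pointwise norm estimates $|J_a(t)|\leq\mathfrak{s}_\delta'(t)-\lambda_a\mathfrak{s}_\delta(t)$ from index comparison, and multiply these to bound the Jacobian. This does not work as written for two reasons. First, the transversality condition $(\nabla_T^TJ)(0)+\mathfrak{A}^\xi(J(0))\in T_x^\perp N$ only constrains the tangential part of $(\nabla_T^TJ)(0)$; the normal part is free, so in general one cannot prescribe $(\nabla_tJ_a)(0)=-\lambda_aE_a$ (this is precisely why the paper's comparison solution $\mathcal{A}_\delta$ carries cross terms $C_\alpha^\mathfrak{g}\mathfrak{s}_\delta f_\mathfrak{g}$). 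Second, even if one had those norm bounds, the Jacobian is a determinant, and a product of norm bounds does not bound a determinant once the Jacobi fields cease to be $g_T$-orthogonal. The paper's Theorem 4.8 sidesteps both problems by comparing the logarithmic derivative $(\det\mathcal{A})'/\det\mathcal{A}$ at each $s$ with that of $\det\mathcal{A}_\delta$, using the index-form inequality $I_{[0,s]}(\bar J_\mathbbm{a},\bar J_\mathbbm{a})\leq I_{[0,s]}(Y_\mathbbm{a},Y_\mathbbm{a})$ applied to an orthonormalized basis $\{\bar J_\mathbbm{a}\}$ at $t=s$ against comparison fields built from $\mathcal{A}_\delta$; this is robust to the mixed initial conditions and does not rely on orthogonality of the original Jacobi fields. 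You should replace the norm-bound argument by this logarithmic-derivative comparison.

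The second part you leave open --- why the various $\Lambda_F$ factors sum to the exponent $(3m+k)/2$ and why both the Busemann--Hausdorff and Holmes--Thompson normalizations fit under it --- is exactly where the real work is. The paper resolves it in the Appendix: Proposition 7.1 gives $e^{-\tau}\leq\Lambda_F^m$ for either volume form, Proposition 7.2 shows $F^*$ has the same uniform constant, and Proposition 7.3 gives $\nu_x(\mathcal{V}_x^*SN)\leq c_{m-k-1}\Lambda_F^{(m-k)/2}$; the remaining factors come from comparing $\sqrt{\det g_n(e_\alpha,e_\beta)}$ to the density of $\bar\mu$ on $N$. Flagging this as ``should work out'' is not enough; the exponent cannot be guessed and has to be tracked through these comparisons.
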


Let $\pi:TM\rightarrow M$ be the tangent bundle over $M$. Unlike Riemannian case, all the connections in Finsler geometry are defined on $\pi^*TM$. Given a local coordinate system $(x^i,y^i)$ of $TM$, let $\nabla$ (resp. $\Gamma^i_{jk}$) denote the Chern connection (resp. connection coefficients), i.e.,
\[\Gamma^k_{ij}(x,y)\frac{\partial}{\partial x^k}:=\nabla_{\frac{\delta}{\delta x^i}|_{(x,y)}}\frac{\partial}{\partial x^j}=:\nabla^y_{\frac{\partial}{\partial x^i}}\frac{\partial}{\partial x^j}, \forall (x,y)\in TM\backslash0.
\]
In general, $\Gamma^k_{ij}(x,y)\neq \Gamma^k_{ij}(x,z)$ if $y\neq z$.
Shen in \cite{Sh1} introduce the {\it T-curvature} to measure the difference of Chern connection coefficients between two points in $TM\backslash0$. More precisely, the T-curvature
$\mathbf{T}:TM\backslash0\times TM\backslash0\rightarrow \mathbb{R}$ is defined by
\begin{equation*}
\mathbf{T}_y(v):=g_y(\nabla^V_vV,y)-g_y(\nabla_v^YV,y), \ \ v\in
T_xM,
\end{equation*}
where $Y$ is a geodesic field such that $Y|_x=y$ and $V$ is any extension of $v$. $\mathbf{T}=0$ if and only if $(M,F)$ is Berwald space. In this case, $(M,F)$ is modeled on a
single Minkowski space and $\Gamma^i_{jk}$ coincide with some Riemannian metric's Christoffel symbols (cf. \cite{BCS,Sh1,Sz}).

We now consider the special case when $N=\gamma$ is a geodesic. It should be remark that neither the Busemann-Hausdorff volume or the Holmes-Thompson volume of $\gamma$ is equal to the length of $\gamma$ unless $F|_\gamma$ is reversible. But we still have the following estimate
\begin{corollary}Let $(M,F)$ be a closed Finsler $m$-manifold with uniform constant $\Lambda_F$ and diameter $d$.
If $\mathbf{K}\geq \delta$ and $\mathbf{T}\leq l$, then for any simple closed geodesic $\gamma$ in $M$,
\[
L_F(\gamma)\geq\frac{\mu(M)}{c_{m-2}\Lambda_F^{(3m+1)/2}\left[\frac{\mathfrak{s}^{m-1}_\delta\left({\min\left\{d,\frac{\pi}{2\sqrt{\delta}}\right\}}\right)}{m-1}+l\int_0^{d}\mathfrak{s}^{m-1}_\delta(t) dt\right]},
\]
where $\mu(M)$ is either the Busemann-Hausdorff volume or the Holmes-Thompson volume of $M$ and $L_F(\gamma)$ is the length of $\gamma$.
\end{corollary}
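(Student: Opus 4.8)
The plan is to specialize Theorem~1.1(2) to $N=\gamma$ with $k=1$, and then carry out two reductions: bound the co-mean curvature of the geodesic $\gamma$ in terms of the $\mathbf{T}$-curvature, and collapse the resulting one-dimensional model integral into the asserted closed form. Since $\gamma$ is simple it is an embedded connected $1$-submanifold, so Theorem~1.1(2) applies with $N=\gamma$, $k=1$. For $k=1$ we have $c_{m-k-1}=c_{m-2}$, the $\Lambda_F$-exponent is $(3m+k)/2=(3m+1)/2$, the integrand is $\bigl(\mathfrak{s}'_\delta-H_{\xi_0}\mathfrak{s}_\delta\bigr)(t)\,\mathfrak{s}^{m-2}_\delta(t)$, and $\zeta(\xi_0)$ is the first positive zero of $\mathfrak{s}'_\delta-H_{\xi_0}\mathfrak{s}_\delta$; thus
\[
\mu(M)\le c_{m-2}\,\Lambda_F^{(3m+1)/2}\,\bar\mu(\gamma)\int_0^{\min\{d,\,\zeta(\xi_0)\}}\bigl(\mathfrak{s}'_\delta-H_{\xi_0}\mathfrak{s}_\delta\bigr)(t)\,\mathfrak{s}^{m-2}_\delta(t)\,dt.
\]

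Next comes the geometric core, the estimate of $H_{\xi_0}$. Because $\gamma$ is a geodesic, $\nabla^{\dot\gamma}_{\dot\gamma}\dot\gamma=0$, so the principal (second-fundamental-form) part of the co-mean curvature of $\gamma$ along a conormal $\xi\in\mathcal{V}^*\gamma\setminus 0$ vanishes; what remains is a $\mathbf{T}$-curvature term measuring the difference between the Chern connections with reference vectors $\dot\gamma$ and the normal direction Legendre-dual to $\xi$. Reading this off from the explicit formula for $H_\xi$ in Section~3 and invoking $\mathbf{T}\le l$, one obtains $H_\xi\ge -l$ for every such $\xi$, hence $H_{\xi_0}\ge -l$. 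In particular, on $(0,\zeta(\xi_0))$ the integrand is nonnegative, $\zeta(\xi_0)<\pi/\sqrt\delta$ when $\delta>0$, and
\[
\bigl(\mathfrak{s}'_\delta-H_{\xi_0}\mathfrak{s}_\delta\bigr)(t)\le \mathfrak{s}'_\delta(t)+l\,\mathfrak{s}_\delta(t).
\]

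Now the integral. Since $\mathfrak{s}^{m-2}_\delta\ge 0$ on the range of integration and $\mathfrak{s}'_\delta\mathfrak{s}^{m-2}_\delta=\frac{1}{m-1}(\mathfrak{s}^{m-1}_\delta)'$ with $\mathfrak{s}_\delta(0)=0$,
\[
\int_0^{\min\{d,\,\zeta(\xi_0)\}}\bigl(\mathfrak{s}'_\delta-H_{\xi_0}\mathfrak{s}_\delta\bigr)\mathfrak{s}^{m-2}_\delta\,dt\le \frac{\mathfrak{s}^{m-1}_\delta\bigl(\min\{d,\,\zeta(\xi_0)\}\bigr)}{m-1}+l\int_0^{\min\{d,\,\zeta(\xi_0)\}}\mathfrak{s}^{m-1}_\delta(t)\,dt.
\]
Using that $\mathfrak{s}_\delta$ is nondecreasing on $[0,\pi/(2\sqrt\delta)]$ with maximum $1/\sqrt\delta$ there (and nondecreasing throughout if $\delta\le 0$, where $\pi/(2\sqrt\delta)=+\infty$), a short case distinction on whether $\min\{d,\zeta(\xi_0)\}$ is $\le$ or $>\pi/(2\sqrt\delta)$ gives $\mathfrak{s}^{m-1}_\delta(\min\{d,\zeta(\xi_0)\})\le\mathfrak{s}^{m-1}_\delta(\min\{d,\pi/(2\sqrt\delta)\})$; and since $\min\{d,\zeta(\xi_0)\}\le d$ and $\mathfrak{s}^{m-1}_\delta\ge 0$ on $[0,d]$ (by the Finsler Bonnet--Myers theorem, $d\le\pi/\sqrt\delta$, when $\delta>0$), the last integral is at most $l\int_0^d\mathfrak{s}^{m-1}_\delta(t)\,dt$.

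Finally, the elementary comparison $\bar\mu(\gamma)\le L_F(\gamma)$ between the induced one-dimensional Busemann--Hausdorff (resp.\ Holmes--Thompson) volume of $\gamma$ and its Finsler length --- obtained by estimating the one-dimensional volume density of $\gamma$ in an $F$-arclength parametrization --- combines with the displays above to yield
\[
\mu(M)\le c_{m-2}\,\Lambda_F^{(3m+1)/2}\,L_F(\gamma)\left[\frac{\mathfrak{s}^{m-1}_\delta\bigl(\min\{d,\pi/(2\sqrt\delta)\}\bigr)}{m-1}+l\int_0^d\mathfrak{s}^{m-1}_\delta(t)\,dt\right],
\]
and rearranging gives the stated lower bound for $L_F(\gamma)$. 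The main obstacle is the estimate of the second paragraph: one must verify that for a geodesic the co-mean curvature of the conormal bundle is governed purely by $\mathbf{T}$, so that $\mathbf{T}\le l$ forces $H_\xi\ge -l$; this is where the precise definitions of $\mathcal{V}^*\gamma$ and of $\mathbf{T}$ from Section~3 are genuinely used, and everything after it is one-variable calculus.
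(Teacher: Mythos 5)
Your approach is the same as the paper's: specialize Theorem~1.1(2) to $N=\gamma$, $k=1$; bound $H_{\xi_0}\ge -l$ from $\mathbf{T}\le l$; and collapse the model integral with the identity $\mathfrak{s}_\delta'\mathfrak{s}_\delta^{m-2}=\tfrac{1}{m-1}(\mathfrak{s}_\delta^{m-1})'$. Your sketch of the $H_\xi$ estimate is correct but a little vague; the paper carries it out explicitly: with $X=\partial/\partial u$ tangent to $\gamma$ and $n=\mathcal{L}^{-1}(\xi)$, geodesy gives $-H_\xi=[g_n(X,X)]^{-1}\mathbf{T}_n(X)$, and since $g_n(X,n)=0$ and $F(n)=1$ the defining inequality for $\mathbf{T}\le l$ reduces to $\mathbf{T}_n(X)\le l\,g_n(X,X)$, hence $-H_\xi\le l$. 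Your one-variable calculus (monotonicity of $\mathfrak{s}_\delta$ up to $\pi/(2\sqrt\delta)$, $\zeta(\xi_0)<\pi/\sqrt\delta$, Bonnet--Myers to keep $\mathfrak{s}_\delta\ge 0$ on $[0,d]$) is all fine.

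The one place I would not wave off as ``elementary'' is $\bar\mu(\gamma)\le L_F(\gamma)$. In $F$-arclength parametrization the one-dimensional Busemann--Hausdorff density along $\gamma$ is the harmonic mean $\tfrac{2\lambda}{1+\lambda}$ with $\lambda=F(-\dot\gamma)$, which exceeds $1$ as soon as $F(-\dot\gamma)>F(\dot\gamma)$; so the pointwise inequality you invoke is not automatic, and the paper's own remark just before Corollary~1.2 explicitly warns that $\bar\mu(\gamma)\ne L_F(\gamma)$ unless $F|_\gamma$ is reversible. The paper's proof silently replaces $\bar\mu(N)$ by $\ell$ at this same point, so you are not introducing a new error relative to the source, but you should not present the step as trivial: it requires either an explicit orientation convention (or, say, using the harmonic--arithmetic mean inequality to bound $\bar\mu_{BH}(\gamma)$ by the average of the two directed lengths and then taking $L_F(\gamma)$ to be the larger of them), and an analogous argument for the Holmes--Thompson density, both of which should be stated.
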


According to \cite[Lemma 12.2.5]{Sh1}, Klingenberg lemma\cite{Kl} can be extended to the case of a reversible
Finsler metric. This together with the corollary above furnishes
\begin{corollary}
Let $(M,F)$ be a closed reversible Finsler $m$-manifold with uniform constant $\leq\Lambda$, diameter $\leq d$, $\mu(M)\geq V$, $|\mathbf{K}|\leq \delta$ and $\mathbf{T}\leq l$, where $\mu(M)$ is either the Busemann-Hausdorff volume or the Holmes-Thompson volume of $M$. Then
\[
\mathfrak{i}_M\geq \min \left\{ \frac{\pi}{\sqrt{\delta}},\ \frac{V}{2 c_{m-2}\Lambda^{(3m+1)/2}\left[\frac{\mathfrak{s}^{m-1}_{-\delta}(d)}{m-1}+l  \int_0^d\mathfrak{s}^{m-1}_{-\delta}(t) dt\right]}   \right\}.
\]

\end{corollary}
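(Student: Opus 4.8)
The plan is to combine the extension of Klingenberg's lemma to reversible Finsler metrics (\cite[Lemma 12.2.5]{Sh1}) with the preceding corollary. Recall that for a closed reversible Finsler manifold, \cite[Lemma 12.2.5]{Sh1} asserts that $\mathfrak{i}_M$ equals the minimum of the conjugate radius $\mathrm{conj}(M)$ and half the length of the shortest closed geodesic; moreover, whenever $\mathfrak{i}_M<\mathrm{conj}(M)$, there is a \emph{simple} closed geodesic $\gamma$ with $L_F(\gamma)=2\mathfrak{i}_M$. Thus it suffices to bound $\mathrm{conj}(M)$ from below by $\pi/\sqrt{\delta}$, and, in the remaining case, to bound $L_F(\gamma)$ from below for the geodesic produced in the second alternative.

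First I would treat the conjugate radius. Since $|\mathbf{K}|\leq\delta$ gives in particular $\mathbf{K}\leq\delta$, the Jacobi equation along any unit-speed geodesic is dominated by the constant-curvature model $y''+\delta y=0$, and the standard Sturm/index-form comparison — which applies verbatim to Finsler geodesics, as the flag curvature governs the Jacobi fields — shows that no conjugate point occurs before parameter $\pi/\sqrt{\delta}$; hence $\mathrm{conj}(M)\geq\pi/\sqrt{\delta}$. If $\mathfrak{i}_M\geq\mathrm{conj}(M)$ then $\mathfrak{i}_M\geq\pi/\sqrt{\delta}$ and the asserted inequality holds trivially, so we may assume $\mathfrak{i}_M=\tfrac12 L_F(\gamma)$ for the simple closed geodesic $\gamma$ above.

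Next I would apply the previous Corollary to $\gamma$, taking the lower flag-curvature bound to be $-\delta$ (which follows from $|\mathbf{K}|\leq\delta$), together with $\mathbf{T}\leq l$, $\Lambda_F\leq\Lambda$ and $\diam(M)\leq d$. Because the lower bound $-\delta\leq 0$, the convention $\pi/\sqrt{-\delta}:=+\infty$ forces $\min\{d,\pi/(2\sqrt{-\delta})\}=d$, and since $\mathfrak{s}_{-\delta}$ is nonnegative and increasing, $\mathfrak{s}^{m-1}_{-\delta}\big(\min\{d,\pi/(2\sqrt{-\delta})\}\big)=\mathfrak{s}^{m-1}_{-\delta}(d)$. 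The corollary then yields
\[
L_F(\gamma)\geq \frac{\mu(M)}{c_{m-2}\Lambda_F^{(3m+1)/2}\left[\frac{\mathfrak{s}^{m-1}_{-\delta}(d)}{m-1}+l\int_0^d\mathfrak{s}^{m-1}_{-\delta}(t)\,dt\right]}\geq \frac{V}{c_{m-2}\Lambda^{(3m+1)/2}\left[\frac{\mathfrak{s}^{m-1}_{-\delta}(d)}{m-1}+l\int_0^d\mathfrak{s}^{m-1}_{-\delta}(t)\,dt\right]},
\]
where in the last step I used $\mu(M)\geq V$ and $\Lambda_F\leq\Lambda$ (both moving the fraction in the decreasing direction). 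Dividing by $2$ and combining with the conjugate-radius bound through \cite[Lemma 12.2.5]{Sh1} gives exactly the claimed estimate.

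The step I expect to require the most care is checking that the version of Klingenberg's lemma invoked from \cite{Sh1} genuinely produces a \emph{simple} closed geodesic of length precisely $2\mathfrak{i}_M$ (not merely $\geq 2\mathfrak{i}_M$), since the previous Corollary is stated only for simple closed geodesics; once this is in hand, all the curvature and volume manipulations are routine substitutions. A secondary point is to confirm that the Finsler conjugate-point comparison needs only the upper flag-curvature bound, while reversibility is used precisely where one passes from geodesic loops to honest closed geodesics in \cite[Lemma 12.2.5]{Sh1}.
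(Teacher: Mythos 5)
Your proposal is correct and follows exactly the route the paper intends: invoke the reversible-Finsler Klingenberg lemma (\cite[Lemma 12.2.5]{Sh1}) to reduce to either the conjugate radius or half the length of a closed geodesic, bound the former by $\pi/\sqrt{\delta}$ via the upper flag-curvature bound, and bound the latter by applying Corollary 1.2 with the lower bound $-\delta$. The paper states the corollary without a written proof, so your filled-in details (including the observation that $\min\{d,\pi/(2\sqrt{-\delta})\}=d$ and the simplicity caveat) are a faithful and somewhat more careful rendering of the same argument.
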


Randers metrics are natural and important Finsler metrics which are defined
as the sum of a Riemannian metric and a 1-form.

\begin{theorem}
Let $(M,F)$ be a compact Randers manifold with $\mathbf{K}\geq \delta$ and let $\gamma$ be a closed geodesic in $M$. Set $b:=\sup_{x\in M}\|\beta\|_{\alpha}$ and $b_1:=\sup_{x\in M}\|\nabla\beta\|_{\alpha}$. Then
\[
L_F(\gamma)\geq \frac{(1-b)^{\frac{m+2}{2}}}{c_{m-2}(1+b)^{\frac{1}{2}}\mathfrak{S}(b,b_1,\delta,d,m)}\max\left\{\frac{\mu_{BH}(M)}{(1+b)^{\frac{m+1}{2}}},(1-b)^{\frac{m+1}{2}}\vol_\alpha(M)\right\},
\]
where
\[
\mathfrak{S}(b,b_1,\delta,d,m)=\frac{\mathfrak{s}_\delta^{m-1}\left(\min\left\{d,\frac{\pi}{2\sqrt{\delta}}\right\}\right)}{m-1}+\frac{b_1(2b^3+5b^2-2b+7)}{2(1-b)^3}\int_0^{d}\mathfrak{s}^{m-1}_\delta(t) dt,
\]
and $\vol_\alpha$ is the Riemannian volume of $M$ induced by $\alpha$.
\end{theorem}

However, Finsler geometry is much more complicated
than Riemannian geometry.

\begin{example}[\cite{BCS}]
Let $M:=\mathbb{S}^2\times \mathbb{S}$. Let $\alpha$ be the canonical Riemannian product metric on $M$, that is,
$\alpha=\sqrt{dr\otimes dr+\sin^2 (r) d\theta \otimes d\theta+dt\otimes dt}$, where $(r,\theta)$ (resp. $t$) is the usual spherical coordinates on $\mathbb{S}^2$ (resp. $\mathbb{S}$). Choose a $1$-form $\beta_\epsilon:=\epsilon\, dt$, where $\epsilon\in [0,1)$. Note that $\beta_\epsilon$ is globally defined on $M$, even though the coordiante $t$ is not. Take $F_\epsilon=\alpha+\beta_\epsilon$. A direct calculation shows that $(M,F_\epsilon)$ satisfy
\[
\mu_{HT}(M)=8\pi^2,\ \mathbf{K}\geq 0,\ \diam(M)\leq 6\pi,
\]
for all $\epsilon\in [0,1)$. Since $F_\epsilon$ is a Berwald metric, $\gamma(s)=(0,0,-s)$ is a closed geodesic of $F_\epsilon$. Clearly, $L_{F_\epsilon}(\gamma)=\pi (1-\epsilon)\rightarrow 0$ as $\epsilon\rightarrow 1$.

\end{example}

\section{Preliminaries}
In this section, we recall some definitions and properties cencerned
with Finsler manifolds. See \cite{BCS,Sh2} for more details.

Let $(M,F)$ be a (connected) Finsler manifold with Finsler metric
$F:TM\rightarrow [0,\infty)$. Define $S_xM:=\{y\in T_xM:F(x,y)=1\}$
and $SM:=\underset{x\in M}{\cup}S_xM$. Let $(x,y)=(x^i,y^i)$ be
local coordinates on $TM$, and let $\pi :TM\rightarrow M$ and
$\pi_1:SM\rightarrow M$ be the natural projections. Denote by
$c_{n-1}$ the volume of the Euclidean unit $(n-1)$-sphere. Define
\begin{align*}
&\ell^i:=\frac{y^i}{F},\ g_{ij}(x,y):=\frac12\frac{\partial^2
F^2(x,y)}{\partial y^i\partial
y^j},&A_{ijk}(x,y):=\frac{F}{4}\frac{\partial^3 F^2(x,y)}{\partial
y^i\partial y^j\partial y^k},\\
&\gamma^i_{jk}:=\frac12 g^{il}\left(\frac{\partial g_{jl}}{\partial
x^k}+\frac{\partial g_{kl}}{\partial x^j}-\frac{\partial
g_{jk}}{\partial x^l}\right),
&N^i_j:=\left(\gamma^i_{jk}\ell^j-A^i_{jk}\gamma^k_{rs}\ell^r
\ell^s\right)\cdot F.
\end{align*}
The Chern connection $\nabla$ is defined on the pulled-back bundle
$\pi^*TM$ and its forms are characterized by the following structure
equations:

(1) Torsion freeness: $dx^j\wedge\omega^i_j=0$;

(2) Almost $g$-compatibility: $d
g_{ij}-g_{kj}\omega^k_i-g_{ik}\omega^k_j=2\frac{A_{ijk}}{F}(dy^k+N^k_l
dx^l)$.

From above, it's easy to obtain $\omega^i_j=\Gamma^i_{jk}dx^k$, and
$\Gamma^i_{jk}=\Gamma^i_{kj}$.

The curvature form of the Chern connection is defined as
\[
\Omega^i_j:=d\omega^i_j-\omega^k_j\wedge\omega^i_k=:\frac{1}{2}R^i_{j\,kl}d
x^k\wedge d x^l+P^i_{j\,kl}d x^k\wedge\frac{d y^l+N^l_s dx^s}{F}.
\]
Given a non-zero vector $V\in T_xM$, the flag curvature $K(y,V)$ on
$(x,y)\in TM\backslash 0$ is defined as
\[
\mathbf{K}(y,V):=\frac{V^i y^jR_{jikl}y^l
V^k}{g_y(y,y)g_y(V,V)-[g_y(y,V)]^2},
\]
where $R_{jikl}:=g_{is}R^s_{j\,kl}$. And the Ricci curvature of $y$
is defined by
\[
\mathbf{Ric}(y):=\underset{i}{\sum}\,K(y,e_i),
\]
where $e_1,\ldots, e_n$ is a $g_y$-orthonormal base on $(x,y)\in
TM\backslash0$.

Given $y\in T_xM\backslash0$, extend $y$ to a geodesic field $Y$ is a
neighborhood of $x$ (i.e., $\nabla^Y_YY=0$), and define
$T$-curvature $\mathbf{T}$ as
\begin{equation*}
\mathbf{T}_y(v):=g_y(\nabla^V_vV,y)-g_y(\nabla_v^YV,y), \ \ v\in
T_xM,
\end{equation*}
where $V$ is a vector field with $V_x=v$. In any local coordinates $%
(x^i,y^i) $,
\begin{equation*}
\mathbf{T}_y(v)=y^lg_{kl}(y)\{\Gamma^k_{jm}(v)-\Gamma^k_{jm}(y)\}v^jv^m.
\end{equation*}
We say $\mathbf{T}\leq l$ if
\begin{equation*}
\mathbf{T}_y(v)\leq l\left[\sqrt{g_y(v,v)}-g_y\left(v,\frac{y}{F(y)}\right)%
\right]^2F(y),
\end{equation*}
where $y,v\in TM\backslash0$. Similarly, we define the bound $\mathbf{T}\geq
l$.

Given $y\neq0$, we always use $\gamma_y(t)$ to denote a constant speed geodesic
with $\dot{\gamma}_y(0)=y$.
Given any the volume form $d\mu$ on $M$.
In a local coordinate system $(x^i)$, express
$d\mu=\sigma(x)dx^1\wedge\cdots\wedge dx^n$. For $y\in
T_xM\backslash0$, define the distorsion of $(M,F,d\mu)$ as
\[
\tau(y):=\log \frac{\sqrt{\det(g_{ij}(x,y))}}{\sigma(x)}.
\]
And we define the S-curvature $\mathbf{S}$ as
\[
\mathbf{S}(y):=\frac{d}{dt}[\tau(\dot{\gamma}_y(t))]|_{t=0},
\]
Two volume forms used frequently are the Busemann-Hausdorff volume
form $d\mu_{BH}$ and the Holmes-Thompson volume form $d\mu_{HT}$,
respectively. Given a local coordinate system $(x^i)$,
$d\mu_{BH}=\sigma_{BH}(x)dx^1\wedge\cdots\wedge dx^n$ and
$d\mu_{HT}=\sigma_{HT}(x)dx^1\wedge\cdots\wedge dx^n$, where
\begin{align*}
\sigma_{BH}(x)&=\frac{\vol_{\mathbb{R}^n}(\mathbb{B}^n)}{\vol_{\mathbb{R}^n}\{y\in
T_xM:F(x,y)<1 \}},\\
\sigma_{HT}(x)&=\frac{1}{c_{n-1}}\int_{S_xM}\det
g_{ij}(x,y)\left(\overset{n}{\underset{i=1}{\sum}}(-1)^{i-1}
y^idy^1\wedge\cdots \wedge\hat{dy^i}\wedge \cdots\wedge dy^n\right).
\end{align*}

The reversibility $\lambda_F$ of $(M,F)$ is defined by (\cite{Ra})
\begin{equation*}
\lambda_F:=\underset{(x,y)\in
TM\backslash0}{\sup}\frac{F(x,-y)}{F(x,y)}.
\end{equation*}
Clearly $\lambda_F\geq 1$ and $\lambda_F=1$ if and only if $F$ is
reversible. The uniformity constant of $(M,F)$ is defined by
(\cite{Egl})
\[
\Lambda_F:=\underset{X,Y,Z\in SM}{\sup}\frac{g_X(Y,Y)}{g_Z(Y,Y)}.
\]
Clearly, $\lambda_F\leq\sqrt{\Lambda_F}$ and $\Lambda_F=1$ if and
only if $F$ is Riemannian.

By \cite{Sh1}, the Legendre transformation $\mathcal {L} : TM
\rightarrow T^*M$ is defined by
\[\mathcal {L}(Y)=
\left \{
\begin{array}{lll}
&0, &Y=0,\\
&g_Y(Y,\cdot),&Y\neq 0.
\end{array}
\right.
\]
For any $x\in M$, the Legendre transformation is a smooth
diffeomorphism from $T_xM\backslash\{0\}$ onto
$T^*_xM\backslash\{0\}$.

\section{Conormal bundle}

Throughout this paper, we assume that $(M,F)$ is a forward complete Finsler $m$-manifold and $i:N\hookrightarrow M^m$ is a connected $k$-dimensional submanifold of $M$, $0\leq k\leq m$. The rules that govern our index gymnastics are as follows: $i,j$ run from $1$ to $m$. $\alpha,\beta$ run from $1$ to $k$. $A,B$ run from $k+1$ to $m$. $\mathfrak{g}, \mathfrak{h}$ run from $k+1$ to $m-1$.
And $\mathbbm{a},\mathbbm{b}$ run from $1$ to $m-1$.

According to \cite{Ru,Sh1}, the "normal bundle" $\mathcal {V}N$ of $N$ is defined
as
\[
\mathcal {V}N:=\{n\in TM: n=0 \text{ or } g_{n}(n, TN)=0\}.
\]
It is remarkable that if $k\geq 2$, then $\mathcal {V}N$ is not a
vector bundle unless $F$ is Riemannian. Even in the case that $k=1$, $\mathcal
{V}N$  maybe not a vector bundle unless $F$ is reversible.

Consider the following subbundle of $T^*M$
\[
\mathcal {V}^*N:=\{\omega\in T^*M: \,i^*\omega=0\}.
\]
It is easy to see that $\mathcal
{V}^*N=\mathcal {L}(\mathcal {V}N)$, where $\mathcal {L}:TM\rightarrow T^*M$ is the Legendre
transformation. Note that $\mathcal {L}$ is a
homeomorphism from $TM$ to $T^*M$ and a diffeomorphism from
$TM\backslash0$ to $T^*M\backslash0$. Hence, $\mathcal
{V}^*N$ is called the {\it conormal bundle} over $N$ in $M$.
\begin{example}
Let $F(y)=\alpha(y)+\beta(y)$ be a Randers norm on a vector space $V$, where $\alpha$ is an Euclidean norm and $\beta$ is a $1$-form. Let $N=\{vt+w:t\in \mathbb{R}\}$ be a straight line in $V$, where $v\neq0$ and $w$ are constant vectors in $V$. Clearly, $\mathcal {V}^*N=\{\xi:\xi(v)=0\}$ is a bundle. However,
a direct calculation shows that
$\mathcal {V}N=\{n:\, \alpha(n)\beta(v)=-\langle v, n\rangle \}$, where $\langle \cdot, \cdot\rangle$ is the inner product induced by $\alpha$. Hence, $\mathcal {V}N$ is a vector bundle if and only if $\beta=0$, i.e., $F=\alpha$.
\end{example}

Let $\pi:\mathcal {V}^*N\rightarrow N$ denote the bundle projection. Given $\eta\in \mathcal {V}^*N$, set $x=\pi(\eta)$. There exist local coordinate systems $(U_N,u^\alpha)$ and $(U_M,x^i)$ of $x$ and $i(x)$, respectively, such that $U_N\subset U_M$,
$u^\alpha=x^\alpha$ and $x^A|_{U_N}=0$. Hence, for each
$\xi\in\pi^{-1}(U_N)$, $\xi=\xi_A dx^A$ and therefore, $\pi^{-1}(U_N)\approx U_N\times \mathbb{R}^{m-k}$. We call $(u^\alpha,
\xi_A)$ the (local) {\it canonical coordinates} on $\mathcal
{V}^*N$.

For simplicity, set $\mathcal {V}_x^*N:=\pi^{-1}(x)$. And we always identify $N$ with the zero section of $\mathcal
{V}^*N$.

\begin{definition}
 Given a
point $x\in N$ and $\xi\in \mathcal
{V}_x^*N\backslash 0$, the co-second fundamental form of
$N$ along $\xi$ in $M$ is defined as
\[
h_\xi(X,Y):=\langle \xi,
\nabla^{n}_X\overline{Y}\rangle=g_{n}(n,\nabla^{n}_X\overline{Y}),\ \forall \, X,Y\in T_xN,
\]
where $n:=\mathcal {L}^{-1}(\xi)$ and $\overline{Y}$ is any
extension of $Y$ to a tangent vector field on $N$.
\end{definition}
By a direct calculation, one can check that
$h$ is well-defined and $h_\xi:T_xM\otimes T_xM\rightarrow \mathbb{R}$ is a symmetric bilinear form. Let $\Lambda^\xi$ denote the normal curvature defined in \cite{Sh5}. Then $h_\xi(X,X)=-\Lambda^\xi(X)-\mathbf{T}_n(X)$.

\begin{definition}
Given any $\xi\in\mathcal {V}^*N\backslash 0$, co-Weingarten map
$\mathfrak{A}^\xi:T_xN\rightarrow T_xN$ is defined as
\[
\mathfrak{A}^\xi(X):=-(\nabla_X^{\bar{n}}\bar{n})^{\top_n},
\]
where $\bar{n}=\mathcal {L}^{-1}(\bar{\xi})$, $\bar{\xi}$ is an
extension of $\xi$ to a co-normal vector field on $N$, and the
superscript $\top_n$ denotes projection to $T_xN$ by $g_n$.
\end{definition}

\begin{proposition}
$\mathfrak{A}^\xi$ is well-defined and
\[
g_n(Y,\mathfrak{A}^\xi(X))=h_\xi(X,Y), \forall \,X,Y\in T_xM,
\]
where $n=\mathcal {L}^{-1}(\xi)$.
\end{proposition}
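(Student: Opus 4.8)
The plan is to establish the displayed identity first; well-definedness of $\mathfrak{A}^\xi$ then falls out as a formal consequence. Fix $x\in N$ and $\xi\in\mathcal{V}_x^*N\setminus 0$, choose a conormal extension $\bar\xi$ of $\xi$ along $N$, and set $\bar n:=\mathcal{L}^{-1}(\bar\xi)$; since $\mathcal{L}^{-1}$ is smooth off the zero section and $\bar\xi(x)=\xi\neq 0$, the field $\bar n$ is smooth near $x$ on $N$, with $\bar n(x)=n$. The single fact to exploit is that $\bar\xi=\mathcal{L}(\bar n)=g_{\bar n}(\bar n,\cdot)$ annihilates $TN$, i.e. $g_{\bar n}(\bar n,W)\equiv 0$ along $N$ for every vector field $W$ tangent to $N$. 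Taking $W=\overline Y$, an extension of $Y\in T_xN$ to a tangent field on $N$, I differentiate the scalar function $t\mapsto g_{\bar n}(\bar n,\overline Y)$ along a curve $c$ in $N$ with $c(0)=x$, $\dot c(0)=X$.

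The computational heart is the Chern-connection form of metric compatibility: for vector fields $U,W$ along $c$ with reference field $V$,
\[
\frac{d}{dt}\,g_V(U,W)=g_V\!\big(\nabla_{\dot c}^{V}U,\,W\big)+g_V\!\big(U,\,\nabla_{\dot c}^{V}W\big)+\frac{2}{F(V)}\,A_V\!\big(U,\,W,\,\nabla_{\dot c}^{V}V\big),
\]
which follows from the almost $g$-compatibility relation in the structure equations of $\nabla$ together with $\partial g_{ij}/\partial y^k=2A_{ijk}/F$, the $N^k_l$-term of the former combining with $\dot V^k$ into $\nabla^{V}_{\dot c}V$. I apply this with $V=U=\bar n$ and $W=\overline Y$: the correction term is $\tfrac{2}{F(\bar n)}A_{\bar n}(\bar n,\overline Y,\nabla_X^{\bar n}\bar n)$, and it vanishes because $A_{ijk}y^i=0$ by $0$-homogeneity of the Cartan tensor. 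Since the left-hand side is the derivative of the constant $0$, evaluating at $t=0$ gives
\[
g_n\!\big(\nabla_X^{n}\bar n,\,Y\big)+g_n\!\big(n,\,\nabla_X^{n}\overline Y\big)=0 .
\]
By the definition of the co-second fundamental form, $g_n(n,\nabla_X^n\overline Y)=\langle\xi,\nabla_X^n\overline Y\rangle=h_\xi(X,Y)$, so $g_n(\nabla_X^{n}\bar n,Y)=-h_\xi(X,Y)$. Because $g_n$ is an inner product on $T_xM$ we have the $g_n$-orthogonal splitting $T_xM=T_xN\oplus(T_xN)^{\perp_n}$, so the projection $(\cdot)^{\top_n}$ is legitimate and, for $Y\in T_xN$, $g_n(\nabla_X^{n}\bar n,Y)=g_n\big((\nabla_X^{n}\bar n)^{\top_n},Y\big)$. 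Recalling $\mathfrak{A}^\xi(X)=-(\nabla_X^{\bar n}\bar n)^{\top_n}$, this yields $g_n(Y,\mathfrak{A}^\xi(X))=h_\xi(X,Y)$, the asserted formula.

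Well-definedness is now immediate: the right-hand side $h_\xi(X,Y)$ depends only on $\xi,X,Y$ (this being the already-granted well-definedness of $h$), and $g_n$ is positive definite, hence nondegenerate, on $T_xN$; thus $\mathfrak{A}^\xi(X)$ is the unique vector of $T_xN$ representing the functional $Y\mapsto h_\xi(X,Y)$ on $(T_xN,g_n)$, so it does not depend on the conormal extension $\bar\xi$ used to compute it, and linearity in $X$ is clear from the defining formula. (Symmetry of $h_\xi$ moreover forces $\mathfrak{A}^\xi$ to be $g_n$-self-adjoint, as a Weingarten map should be.) I expect the middle paragraph to be the only genuine obstacle: one must keep the reference vector equal to $\bar n$ throughout, remember that $\bar n$ plays simultaneously the roles of the differentiated field and of the reference direction, and verify honestly that the nonlinear-connection correction and the Cartan correction produced by almost $g$-compatibility really do collapse once everything is contracted against $\bar n$. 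This is precisely where the non-Riemannian features enter — the same "direct calculation" the text invokes for the well-definedness of $h$ — while the remainder (existence of $(\cdot)^{\top_n}$, independence of the extension $\overline Y$ of $Y$, linearity in $X$) runs exactly parallel to the Riemannian Weingarten map.
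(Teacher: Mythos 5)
Your proof is correct, and the central computation agrees with what the paper does implicitly: the paper's one-line chain $g_n(\mathfrak{A}^\xi(X),Y)=g_{\bar n}(-\nabla_X^{\bar n}\bar n,\overline Y)=g_n(n,\nabla_X^n\overline Y)=h_\xi(X,Y)$ hides exactly the differentiation of $g_{\bar n}(\bar n,\overline Y)\equiv 0$ along $N$ that you spell out via almost $g$-compatibility and $A_{ijk}y^i=0$, and your unpacking of this step (including the cancellation of the Cartan correction term) is right. Where you genuinely diverge is in the treatment of well-definedness: the paper establishes it first, independently, by a coordinate computation in canonical coordinates (its formulas (2.1)--(2.2)), showing that the extension-dependent term $g^{*Aj}_{(\bar\xi)}\partial_\alpha\bar\xi_A\,\partial_j$ pairs to zero against $T_xN$ under $g_n$; only then does it state the displayed identity. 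You instead prove the identity outright for an arbitrary conormal extension and then deduce well-definedness from the nondegeneracy of $g_n$ on $T_xN$ together with the already-granted well-definedness of $h_\xi$. Your ordering is cleaner and more conceptual (no coordinate bookkeeping, and $g_n$-self-adjointness of $\mathfrak{A}^\xi$ drops out for free), while the paper's route has the advantage of making the extension-independence visible directly at the level of the raw formula for $\nabla_X^{\bar n}\bar n$, before any pairing is taken. Both are valid; the key Finslerian input -- that the Cartan-tensor correction in the compatibility identity is killed by contraction with $\bar n$ -- is the same in either version.
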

\begin{proof}
Choose a local canonical coordinates system $(u^\alpha,\xi_A)$ around $\xi$. Let
$\bar{n}$ be defined above. Thus,
\begin{align*}
\bar{n}=g^{*Aj}_{(\bar{\xi})}\,\bar{\xi}_A\frac{\partial}{\partial
x^j}=:Z^j\frac{\partial}{\partial x^j},\
\nabla_X^{\bar{n}}\bar{n}=\left[X^\alpha\frac{\partial Z^j}{\partial
x^\alpha}+\Gamma^j_{i\alpha}(n)Z^iX^\alpha\right]\frac{\partial}{\partial
x^j},\tag{2.1}
\end{align*}
and
\[
\frac{\partial Z^j}{\partial x^\alpha}=\bar{\xi}_A\frac{\partial
g^{*Aj}}{\partial
x^\alpha}(\bar{\xi})+g^{*Aj}_{(\bar{\xi})}\,\frac{\partial
\bar{\xi}_A}{\partial x^\alpha}.
\]
Hence,
\[
g_n\left(X^\alpha\frac{\partial Z^j}{\partial x^\alpha}\frac{\partial }{\partial x^j},
\frac{\partial}{\partial x^\beta}\right)=g^*_{\beta
j}(\xi)\left(\frac{\partial g^{*Aj}}{\partial
x^\alpha}({\xi})\right){\xi}_A X^\alpha.\tag{2.2}
\]
It follows from (2.1) and (2.2) that $\mathfrak{A}^\xi$ is
well-defined.

Given $X,Y\in T_xN$. Extend $Y$ to a tangent vector field
$\overline{Y}$ on $N$. Then
\[
g_{n}(\mathfrak{A}^\xi(X),Y)=g_{\bar{n}}(-\nabla_X^{\bar{n}}\bar{n},\overline{Y})
=g_{n}(n,\nabla^n_X\overline{Y})=h_\xi(X,Y).
\]\end{proof}

Given $\xi\in \mathcal {V}^*_xN\backslash 0$.
Note that $i^*g_{n}$ is a Euclidean metric on $T_xN$, where $n={\mathcal {L}^{-1}(\xi)}$. We define the {\it co-mean curvature} of $N$ along $\xi$ by
\[
H_\xi:=\text{tr}_{i^*g_{n}}h_\xi.
\]
It is easy to see that $H_\xi=\sum_{\alpha}\lambda_\alpha$, where $\lambda_\alpha$ is the eigenvalues of $\mathfrak{A}^\xi$.
In the Riemannian case, $h_\xi$ is the second fundamental form $h_n$ and $\mathfrak{A}^\xi$ is the Weingarten map $\mathfrak{A}^n$.

Given $\xi\in \mathcal {V}^*_xN$ with $F^*(\xi)=1$, let $n:=\mathcal {L}^{-1}(\xi)$, $T=\dot{\gamma}_n(t)$, $n^\bot=\{X\in T_xM:
g_n(n,X)=0\}$ and $T^\bot_xN=\{X\in T_xM: g_n(X,T_xN)=0\}$.
The
collection $\mathfrak{T}$ of transverse Jacobi fields along the geodesic
$\gamma_n(t)$, $t\in [0,a]$, is defined by
\[
\mathfrak{T}:=\{J:J \text{ is a Jacobi field},\ g_T(T,J)=0,\ J(0)\in
T_xN, \ (\nabla_T^TJ)(0)+\mathfrak{A}^\xi(J(0))\in T^\bot_xN\}.
\]
It is easy to see that $\mathfrak{T}$ is a vector space and
$(\nabla^T_TJ)(0)\in n^\bot$. A similar argument to the one given in \cite[p. 141]{IC}
shows that $\dim (\mathfrak{T})=m-1$.

\begin{example}Let $N$, $M$ and $\gamma_n$ be as above. If the
flag curvature $\mathbf{K}_T(T;\cdot)=k$ and $
\mathfrak{A}^\xi=\lambda\cdot \id$, then the transverse Jacobi field $J$ has
the form
\[
J(t)=[\mathfrak{s}'_k-\lambda\mathfrak{s}_k](t)E(t)+\mathfrak{s}_k(t)F(t),
\]
where $E(t)$ and $F(t)$ are two
parallel vector fields along $\gamma_n$ such that $E(0)\in T_xN$ and
$F(0)\in T^\bot_xN\cap n^\bot$.\end{example}

Let $\mathfrak{X}$ denote the collection of all vector fields $X$ along $\gamma_n$ such that $g_T(T,X)=0$ and $X(0)\in T_xN$ and let $\mathfrak{X}_0$
consist of those elements of $\mathfrak{X}$ that vanish at $t = a$. On $\mathfrak{X}$, the index is defined by
\[
I(X,Y):=-h_{\xi}(X(0),Y(0))+\int_0^ag_T(\nabla^T_TX,\nabla^T_TY)+R_T(T,X,T,Y)\,dt.
\]
\begin{definition}
Let $N,M$ and $\gamma_n$ be as above. A point $\gamma_n(t)$ is said to be focal to $N$ along $\gamma$ if there exists a nontrivial transverse Jacobi field
$J$ such that $J(t) = 0$.
\end{definition}
Then we have the following lemma.
\begin{lemma}
Given any $X\in \mathfrak{X}_0$. If $\gamma_n(t)$ has not focal points along
$\gamma_n$ on $(0, a]$ to $N$, then $I(X,X)\geq 0$ with equality if and only if $X=0$.
\end{lemma}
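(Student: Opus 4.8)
The plan is to adapt the classical proof of the focal-point Index Lemma (in the spirit of \cite[p. 141]{IC}) to the present setting, working along $\gamma_n$ on $\pi^*TM$ with reference vector $T=\dot{\gamma}_n$ and the inner product $g_T$. Since $\gamma_n$ has no point focal to $N$ on $(0,a]$, the evaluation map $J\mapsto J(t)$ is injective on $\mathfrak{T}$ for every $t\in(0,a]$; as $\dim\mathfrak{T}=m-1=\dim\{v\in T_{\gamma_n(t)}M:g_T(T,v)=0\}$, it is therefore an isomorphism onto the $g_T$-orthogonal complement of $T$, so any basis $J_1,\dots,J_{m-1}$ of $\mathfrak{T}$ restricts to a frame at each such $t$. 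Hence, since every $X\in\mathfrak{X}_0$ satisfies $g_T(T,X)=0$, one may write $X=\sum_{\mathbbm{a}}f_{\mathbbm{a}}J_{\mathbbm{a}}$ with $f_{\mathbbm{a}}$ smooth on $(0,a]$ and, because $X(a)=0$, with $f_{\mathbbm{a}}(a)=0$. To control the behaviour at $t=0$ I would choose the basis adapted to the $g_n$-orthogonal splitting $n^\bot=T_xN\oplus(T^\bot_xN\cap n^\bot)$, so that $J_1(0),\dots,J_k(0)$ is a basis of $T_xN$ while $J_{k+1},\dots,J_{m-1}$ span $\{J\in\mathfrak{T}:J(0)=0\}$, for which $(\nabla^T_TJ)(0)\in T^\bot_xN\cap n^\bot$; since $J_{\mathbbm{a}}(t)=t\,(\nabla^T_TJ_{\mathbbm{a}})(0)+O(t^2)$ for $\mathbbm{a}>k$ and $X(0)\in T_xN$, a short argument then shows that all the $f_{\mathbbm{a}}$ extend smoothly to $[0,a]$.

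Put $V:=\sum_{\mathbbm{a}}f_{\mathbbm{a}}'J_{\mathbbm{a}}$ and $W:=\sum_{\mathbbm{a}}f_{\mathbbm{a}}\nabla^T_TJ_{\mathbbm{a}}$, so that $\nabla^T_TX=V+W$. The heart of the argument is the pointwise identity
\[
g_T(\nabla^T_TX,\nabla^T_TX)+R_T(T,X,T,X)=g_T(V,V)+\frac{d}{dt}g_T(X,W),
\]
which follows from three standard facts: the $g_T$-compatibility of the Chern connection along the geodesic $\gamma_n$ (the correction term in almost $g$-compatibility vanishes on $\dot{\gamma}_n$); the Jacobi equation satisfied by each $J_{\mathbbm{a}}$; and the Wronskian symmetry $g_T(\nabla^T_TJ_{\mathbbm{a}},J_{\mathbbm{b}})=g_T(J_{\mathbbm{a}},\nabla^T_TJ_{\mathbbm{b}})$, whose $t$-derivative vanishes by $g_T$-self-adjointness of the flag-curvature operator and whose value at $t=0$ vanishes by symmetry of $h_\xi$, the boundary condition defining $\mathfrak{T}$, and the identity $g_n(\mathfrak{A}^\xi(v),v')=h_\xi(v,v')$ of the Proposition above. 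Integrating over $[0,a]$ and using $X(a)=0$ leaves only the boundary term $g_T(X(0),W(0))$; writing $W(0)=-\mathfrak{A}^\xi(X(0))+\eta$ with $\eta\in T^\bot_xN$ (again from the defining condition of $\mathfrak{T}$) and using $X(0)\in T_xN$ together with that same identity gives $g_T(X(0),W(0))=-h_\xi(X(0),X(0))$. Therefore
\[
I(X,X)=-h_\xi(X(0),X(0))+\int_0^a g_T(V,V)\,dt+h_\xi(X(0),X(0))=\int_0^a g_T(V,V)\,dt\geq 0,
\]
since $g_T$ is positive definite; and equality forces $V\equiv 0$ on $(0,a]$, whence, the $J_{\mathbbm{a}}(t)$ being linearly independent there, each $f_{\mathbbm{a}}$ is constant, so $f_{\mathbbm{a}}(a)=0$ yields $X\equiv 0$ (the converse being trivial).

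The step I expect to be the main obstacle is the bookkeeping at $t=0$: unlike the conjugate-point case, the evaluation map degenerates there, the vectors $J_{\mathbbm{a}}(0)$ spanning only the $k$-dimensional $T_xN$, so the smooth extendibility of the coefficients $f_{\mathbbm{a}}$ and the precise form of the boundary term $g_T(X(0),W(0))$ must be verified by hand, via the basis adapted to the splitting of $n^\bot$ and the expansion $J_{\mathbbm{a}}(t)=t\,(\nabla^T_TJ_{\mathbbm{a}})(0)+O(t^2)$ for $\mathbbm{a}>k$. A secondary, purely Finslerian point is to confirm, along $\gamma_n$, both the $g_T$-compatibility of $\nabla^T$ and the $g_T$-self-adjointness of the flag-curvature operator; both are standard in Finsler comparison geometry and are already implicit in the Jacobi-field Example above.
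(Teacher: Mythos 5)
Your proof follows the same route as the paper's: express $X=\sum f_{\mathbbm{a}}J_{\mathbbm{a}}$ in a basis of transverse Jacobi fields, split $\nabla^T_TX$ into $V=\sum f'_{\mathbbm{a}}J_{\mathbbm{a}}$ and $W=\sum f_{\mathbbm{a}}\nabla^T_TJ_{\mathbbm{a}}$ (the paper's $A$ and $B$), invoke the Jacobi equation together with the Lagrange/Wronskian identity and the boundary condition in the definition of $\mathfrak{T}$, and integrate to obtain $I(X,X)=\int_0^a g_T(V,V)\,dt$. The extra care you take at $t=0$ — the degeneracy of the evaluation map on $\mathfrak{T}$ and the smooth extendibility of the $f_{\mathbbm{a}}$ — is a genuine point that the paper tacitly assumes, but it does not change the argument.
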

\begin{proof}From assumption, there exists $n-1$ transverse Jacobi fields $J_i$ such that $\{T,J_i\}$ is a frame field along $\gamma_n$. We can suppose that $X(t)=f^i(t)J_i(t)$ where $f^i(a)=0$. Set $A:=(f^i)'\cdot J_i$ and $B:=f^i\cdot\nabla^T_T J_i$. Then $g_T(\nabla^T_TX,\nabla^T_TX)=g_T(A,A)+g_T(B,B)+2g_T(A,B)$. Using the Jacobi equation, one can easily check that (also see \cite[p. 180]{BCS})
\begin{align*}
&g_T(B,B)+R_T(T,X,T,X)\\
=&\frac{d}{dt}\left[f^if^jg_T(\nabla^T_TJ_i,J_j)\right]-(f^i)' f^jg_T(\nabla^T_TJ_i,J_j)-f^i (f^j)'g_T(\nabla^T_TJ_i,J_j).\tag{2.3}
\end{align*}
The Lagrange identity (\cite[p. 135]{BCS}) yields
\[
g_T(\nabla^T_TJ_i,J_j)-g_T(J_i,\nabla^T_TJ_j)\equiv g_T(\nabla^T_TJ_i,J_j)(0)-g_T(J_i,\nabla^T_TJ_j)(0).
\]
Recall that $(\nabla^T_TJ_j)(0)+\mathfrak{A}^\xi(J_j(0))\in T^\bot_xN$ and $T(0)=n$. Hence,
\[
g_T(J_i(0),(\nabla^T_TJ_j)(0))=g_T(J_i(0),-\mathfrak{A}^\xi(J_j(0)))=-h_\xi(J_i(0),J_j(0)),\tag{2.4}
\]
which implies that $g_T(\nabla^T_TJ_i,J_j)=g_T(J_i,\nabla^T_TJ_j)$. Thus, from (2.3), (2.4) and $f^i(a)=0$, we have
\[
I(X,X)=\int^a_0g_T(A,A)dt\geq0,
\]
with equality $A=0$, i.e., $X=0$.
\end{proof}

Using the lemma above, it is not hard to show
\begin{theorem}
Suppose that $\gamma_n(t)$ has not focal points along
$\gamma_n$ on $(0, a]$ to $N$. Given $X\in \mathfrak{X}$, let $J$ denote the unique transverse Jacobi field along $\gamma_n$
such that $J(a)=X(a)$. Then $I(X,X)\geq I(J,J)$ with equality if and only if $X=J$.
\end{theorem}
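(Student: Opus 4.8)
The plan is to run the classical index--comparison argument, feeding in the preceding Lemma as the positivity input. Write $W:=X-J$. The two conditions cutting out $\mathfrak{X}$ — that $g_T(T,\cdot)$ vanish identically along $\gamma_n$, and that the initial value lie in $T_xN$ — are linear, so $W\in\mathfrak{X}$; moreover $W(a)=X(a)-J(a)=0$, hence $W\in\mathfrak{X}_0$. (That the transverse Jacobi field $J$ with $J(a)=X(a)$ exists and is unique is seen as follows: the linear evaluation map $\mathfrak{T}\to\{v\in T_{\gamma_n(a)}M:g_T(T,v)|_{t=a}=0\}$, $J'\mapsto J'(a)$, goes between two spaces of dimension $m-1$, and its kernel is the space of transverse Jacobi fields vanishing at $t=a$, which is trivial precisely because $\gamma_n(a)$ is not a focal point of $N$; since $g_T(T,X)\equiv0$ forces $X(a)$ into the target space, $J$ is well defined.)

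Since $I$ is a symmetric bilinear form on $\mathfrak{X}$, we have $I(X,X)=I(J,J)+2\,I(J,W)+I(W,W)$, so it suffices to prove $I(J,W)=0$. For this I would integrate $\int_0^ag_T(\nabla^T_TJ,\nabla^T_TW)\,dt$ by parts — legitimate because along the geodesic $\gamma_n$ the Chern connection with reference vector $T$ is compatible with $g_T$ (the Cartan tensor term in the almost $g$-compatibility equation drops out since $\nabla^T_TT=0$) — and then insert the Jacobi equation for $J$, which in the curvature conventions used in the previous Lemma (cf. (2.3)) reads $g_T(\nabla^T_T\nabla^T_TJ,\cdot)=R_T(T,J,T,\cdot)$. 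The two curvature integrals cancel, leaving
\[
\int_0^a\!\left[g_T(\nabla^T_TJ,\nabla^T_TW)+R_T(T,J,T,W)\right]dt=g_T\big((\nabla^T_TJ)(a),W(a)\big)-g_T\big((\nabla^T_TJ)(0),W(0)\big).
\]
The term at $t=a$ vanishes as $W(a)=0$. For the term at $t=0$, the defining property of $\mathfrak{T}$ gives $(\nabla^T_TJ)(0)+\mathfrak{A}^\xi(J(0))\in T^\bot_xN$, while $W(0)\in T_xN$ and $g_T|_{t=0}=g_n$; hence by the Proposition $g_T\big((\nabla^T_TJ)(0),W(0)\big)=-g_n\big(\mathfrak{A}^\xi(J(0)),W(0)\big)=-h_\xi(J(0),W(0))$. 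Substituting back, the integral equals $h_\xi(J(0),W(0))$, which cancels the boundary term $-h_\xi(J(0),W(0))$ in $I(J,W)$; thus $I(J,W)=0$.

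Consequently $I(X,X)=I(J,J)+I(W,W)$. Since $W\in\mathfrak{X}_0$ and $\gamma_n$ has no focal point of $N$ on $(0,a]$, the previous Lemma yields $I(W,W)\ge0$ with equality if and only if $W=0$; therefore $I(X,X)\ge I(J,J)$, with equality exactly when $W=0$, i.e.\ $X=J$. The one genuinely delicate point is the sign accounting in the integration by parts at $t=0$: one must keep the curvature and Jacobi-equation conventions consistent with (2.3) and (2.4) so that the $t=0$ boundary contribution comes out as $+h_\xi(J(0),W(0))$ and not its negative. Everything else is a routine manipulation of the bilinear form $I$ together with a single appeal to the previous Lemma.
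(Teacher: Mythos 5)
Your proof is correct. The paper itself gives no proof here, saying only that the result follows from the preceding Lemma (3.5); your argument — decomposing $X = J + W$ with $W \in \mathfrak{X}_0$, showing $I(J,W) = 0$ by integrating by parts and using the Jacobi equation and the transversality condition at $t=0$, then applying Lemma 3.5 to $I(W,W)$ — is exactly the intended standard route, and your sign bookkeeping (the $t=0$ boundary term $-g_T((\nabla^T_TJ)(0),W(0)) = h_\xi(J(0),W(0))$ cancelling the $-h_\xi(J(0),W(0))$ in the index form, consistent with the conventions in (2.3) and (2.4)) checks out.
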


The proof of the following theorem is almost the same as the one of \cite[Propositon 7.4.1]{BCS}. Hence, we omit it here.
\begin{theorem}
Suppose that some point $\gamma_n(t_0)$, $0<t_0<a$ is focal to $N$ along $\gamma_n$. Then there is $U\in \mathfrak{X}_0$ such that $I(U,U)<0$.
\end{theorem}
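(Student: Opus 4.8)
The plan is to adapt the broken‑Jacobi‑field argument of \cite[Proposition 7.4.1]{BCS}, with the second fundamental form and the conjugate‑point boundary condition there replaced by the co‑second fundamental form $h_\xi$ and the transversality conditions built into $\mathfrak{T}$. First I would extract the focal data: by the definition of a focal point there is a nontrivial $J_0\in\mathfrak{T}$ with $J_0(t_0)=0$, and since $J_0$ solves a linear second‑order Jacobi equation along $\gamma_n$ we must have $(\nabla^T_TJ_0)(t_0)\neq 0$, for otherwise $J_0\equiv 0$. Then I define the broken field
\[
W(t):=\begin{cases} J_0(t), & 0\le t\le t_0,\\ 0, & t_0\le t\le a,\end{cases}
\]
which is piecewise smooth, satisfies $g_T(T,W)\equiv 0$, $W(0)=J_0(0)\in T_xN$ and $W(a)=0$, hence $W\in\mathfrak{X}_0$.

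Next I would show $I(W,W)=0$. On $[t_0,a]$ the field vanishes, so only $[0,t_0]$ contributes; there, integrating by parts and using the Jacobi equation for $J_0$ exactly as in the proof of the Lemma above,
\[
\int_0^{t_0}\big(g_T(\nabla^T_TW,\nabla^T_TW)+R_T(T,W,T,W)\big)\,dt=-\,g_T\big((\nabla^T_TJ_0)(0),J_0(0)\big),
\]
because $J_0(t_0)=0$. Since $T(0)=n=\mathcal{L}^{-1}(\xi)$ we have $g_{T(0)}=g_n$; combining the transversality condition $(\nabla^T_TJ_0)(0)+\mathfrak{A}^\xi(J_0(0))\in T^\bot_xN$ with $J_0(0)\in T_xN$ and the Proposition relating $\mathfrak{A}^\xi$ to $h_\xi$ gives $g_T\big((\nabla^T_TJ_0)(0),J_0(0)\big)=-g_n\big(\mathfrak{A}^\xi(J_0(0)),J_0(0)\big)=-h_\xi(J_0(0),J_0(0))$. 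Substituting this into $I(W,W)=-h_\xi(W(0),W(0))+\int_0^a(\cdots)\,dt$ yields $I(W,W)=0$.

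Finally I would perturb away the corner of $W$ at $t_0$. Differentiating $g_T(T,J_0)\equiv 0$ along the geodesic $\gamma_n$ and using $\nabla^T_TT=0$ shows $(\nabla^T_TJ_0)(t_0)\perp_{g_T}T(t_0)$, so one may choose a smooth field $Z$ along $\gamma_n$ with $g_T(T,Z)\equiv 0$, $Z(0)=Z(a)=0$ and $Z(t_0)=-(\nabla^T_TJ_0)(t_0)$ (for instance $Z=\phi\,P$, with $P$ the parallel field through $-(\nabla^T_TJ_0)(t_0)$ and $\phi$ a bump function equal to $1$ at $t_0$ and vanishing at the endpoints). Then $W+\varepsilon Z\in\mathfrak{X}_0$ for every $\varepsilon$, and bilinearity of $I$ gives $I(W+\varepsilon Z,W+\varepsilon Z)=2\varepsilon\,I(W,Z)+\varepsilon^2 I(Z,Z)$. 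The same integration by parts as above, now with second argument $Z$ (whose boundary values at $0$ and $a$ vanish), gives
\[
I(W,Z)=g_T\big((\nabla^T_TJ_0)(t_0),Z(t_0)\big)=-\,g_T\big((\nabla^T_TJ_0)(t_0),(\nabla^T_TJ_0)(t_0)\big)<0,
\]
since $g_T$ is positive definite and $(\nabla^T_TJ_0)(t_0)\neq 0$. Hence for $\varepsilon>0$ small enough $I(W+\varepsilon Z,W+\varepsilon Z)<0$, and $U:=W+\varepsilon Z$ is the desired element of $\mathfrak{X}_0$.

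The only genuinely Finsler points to watch are: (i) all covariant derivatives and the tensors $g_T,R_T$ are taken with reference vector $T=\dot\gamma_n$, so the integration‑by‑parts and Jacobi‑equation identities are precisely those already established in the proof of the Lemma above; (ii) the cancellation at $t=0$ rests on $g_{T(0)}=g_n$ together with the $\mathfrak{A}^\xi$–$h_\xi$ relation, since $h_\xi$ is not the second fundamental form of a metric but the object $-\Lambda^\xi-\mathbf{T}_n$; and (iii) one must use that $\mathfrak{X}_0$ contains piecewise‑smooth fields so that $W$ and $W+\varepsilon Z$ are admissible. I expect step (ii)—pushing the boundary term through the $\mathfrak{A}^\xi$–$h_\xi$ identity at the foot point—to be the only place needing real care; everything else is a verbatim transcription of the Riemannian broken‑Jacobi‑field computation.
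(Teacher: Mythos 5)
Your proof is correct and follows essentially the same broken‑Jacobi‑field argument that the paper references (it explicitly says the proof is ``almost the same as'' \cite[Proposition 7.4.1]{BCS} and omits it). You correctly adapt the Riemannian argument by using the transversality condition in $\mathfrak{T}$ together with the $\mathfrak{A}^\xi$--$h_\xi$ identity to handle the boundary term at $t=0$, and the remaining steps (the vanishing of $I(W,W)$, the choice of $Z$, and the sign of $I(W,Z)$) are all carried out correctly with the reference-vector conventions used in Lemma 3.5.
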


\begin{remark}
Given an arbitrary point $p\in M\backslash N$, there exists a unit speed minimizing geodesic $\gamma_n$ from $N$ to $p$. A simple first variation argument yields $\xi:=\mathcal {L}(n)\in \mathcal {V}^*N$. If $N$ has a focal point $\gamma_n(t_0)$ along $\gamma_n$, then $d(N,p)\leq t_0$ follows from Lemma 3.5, Theorem 3.7 and the second variation of arc length formula.
\end{remark}

\section{Conormal exponential map}
We define the conormal exponential map $\Exp:\mathcal {V}^*N\rightarrow M$ by
\[
\Exp(\xi):=\exp_{\pi(\xi)}(\mathcal {L}^{-1}(\xi)).
\]
Let $S^*M:=\{\omega\in T^*M:\,F^*(\omega)=0\}$ and $\mathcal {V}^*SN:=S^*M\cap \mathcal {V}^*N$. Now we have the following
\begin{theorem}
For each $\eta \in \mathcal {V}^*SN$, there exists a small $\epsilon(\eta)>0$ and an open neighborhood $\mathcal {W}$ of $\eta$ in $\mathcal {V}^*SN$ such that $\Exp_{*t\xi}$ is nonsingular for all $\xi\in \mathcal {W}$ and $t\in (0, \epsilon(\eta))$. In particular, $\Exp$ is $C^1$ on $N\subset \mathcal {V}^*N$ if and only if $F$ is Riemannian.
\end{theorem}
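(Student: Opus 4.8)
The plan is to realize $\Exp_{*t\xi}$ through Jacobi fields, to identify its singular points with the focal points of $N$, and to exclude focal points for small $t$ by a Rauch-type estimate whose constants are uniform near $\eta$. Fix $\xi\in\mathcal{V}^*SN$, put $x=\pi(\xi)$, $n=\mathcal{L}^{-1}(\xi)$, write $T=\dot\gamma_n$, and take $t>0$. Given $W\in T_{t\xi}(\mathcal{V}^*N)$, choose a curve $c$ in $\mathcal{V}^*N$ with $c(0)=t\xi$, $\dot c(0)=W$, and form the geodesic variation $\Gamma(s,u):=\exp_{\pi(c(s))}\big(u\,\mathcal{L}^{-1}(c(s))\big)$, $u\in[0,1]$; then $u\mapsto\partial_s\Gamma(s,u)|_{s=0}$ is a Jacobi field whose value at $u=1$ is $\Exp_{*t\xi}(W)$, and after the affine reparametrisation $u\mapsto r=ut$ it becomes a Jacobi field $J$ along $\gamma_n$ with $J(t)=\Exp_{*t\xi}(W)$. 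Reading off the initial data in the canonical coordinates, exactly as in the proof of Proposition 3.3 (and using $A_{ijk}n^i=0$ to identify the tangent space of $\mathcal{V}_xN$ at $n$ with $T^\bot_xN$), one gets $J(0)=\pi_*W\in T_xN$ and $(\nabla^T_TJ)(0)+\mathfrak{A}^\xi(J(0))\in T^\bot_xN$, so $J$ lies in the $m$-dimensional space
\[
\widetilde{\mathfrak{T}}:=\{J\ \text{Jacobi along}\ \gamma_n:\ J(0)\in T_xN,\ (\nabla^T_TJ)(0)+\mathfrak{A}^\xi(J(0))\in T^\bot_xN\},
\]
which contains $\mathfrak{T}$ as the hyperplane where $g_T(T,J)\equiv0$; moreover $W\mapsto J$ is a linear isomorphism onto $\widetilde{\mathfrak{T}}$ (if $J\equiv0$ then $J(0)=\pi_*W=0$, so $W$ is vertical, and then $(\nabla^T_TJ)(0)$ is determined injectively from $W$ by the differential of $\mathcal{L}^{-1}$, forcing $W=0$).

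Consequently $\Exp_{*t\xi}$ is singular iff some nonzero $J\in\widetilde{\mathfrak{T}}$ has $J(t)=0$. Splitting $J=fT+J^\bot$ with $g_T(T,J^\bot)=0$, the tangential part of the Jacobi equation gives $f''=0$, while $f(0)=g_n(J(0),n)=0$ because $J(0)\in T_xN$ and $g_n(n,T_xN)=0$; hence $J(t)=0$ forces $f\equiv0$, i.e.\ $J=J^\bot\in\mathfrak{T}$, so $\Exp_{*t\xi}$ is singular exactly when $\gamma_n(t)$ is focal to $N$ along $\gamma_n$ in the sense of Definition 3.6. Now fix a relatively compact neighborhood $\mathcal{W}$ of $\eta$ in $\mathcal{V}^*SN$; the set $\{\gamma_n(u):\xi\in\overline{\mathcal{W}},\,u\in[0,1]\}$ (with $n=\mathcal{L}^{-1}(\xi)$) is compact by forward completeness, so on it $|\mathbf{K}|\le\kappa$ and $\|\mathfrak{A}^\xi\|\le C$ for suitable constants. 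For $J\in\widetilde{\mathfrak{T}}$ with $J\bot T$, $\phi:=\sqrt{g_T(J,J)}$ satisfies $\phi''+\kappa\phi\ge0$ (Jacobi equation together with the $g_T$-compatibility of the Chern connection along a geodesic). If $J(0)\ne0$, then $|\phi'(0)|=|g_n(\mathfrak{A}^\xi(J(0)),J(0))|/\phi(0)\le C\phi(0)$, whence $\phi(t)\ge\phi(0)\big(\mathfrak{s}'_\kappa(t)-C\,\mathfrak{s}_\kappa(t)\big)>0$ for $t\in(0,\epsilon)$ with $\epsilon=\epsilon(\kappa,C)>0$; if $J(0)=0$, then $0\ne(\nabla^T_TJ)(0)\in T^\bot_xN\cap n^\bot$ and $\phi(t)\ge\|(\nabla^T_TJ)(0)\|\,\mathfrak{s}_\kappa(t)>0$ on $(0,\epsilon)$ after shrinking $\epsilon$. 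Since $\epsilon(\eta):=\epsilon$ depends only on $\mathcal{W}$, no $\gamma_n(t)$ with $\xi\in\mathcal{W}$ and $t\in(0,\epsilon(\eta))$ is focal to $N$; this gives the first assertion.

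For the last statement, if $F$ is Riemannian then $\mathcal{L}$ is a smooth bundle isomorphism, $\mathcal{V}^*N$ is the usual normal bundle, and $\Exp$ is the smooth normal exponential map, hence $C^1$ on $N$. Conversely, suppose $\Exp$ is $C^1$ at every point of the zero section. For $x\in N$ and $\omega\in\mathcal{V}^*_xN$, the facts $D(\exp_x)_0=\id$ and the positive $1$-homogeneity of $\mathcal{L}^{-1}$ give
\[
\tfrac{d}{ds}\Big|_{s=0^+}\Exp(s\omega)=\tfrac{d}{ds}\Big|_{s=0^+}\exp_x\big(s\,\mathcal{L}^{-1}(\omega)\big)=\mathcal{L}^{-1}(\omega),
\]
so the vertical one-sided directional derivative of $\Exp$ at $0_x$ is $\omega\mapsto\mathcal{L}^{-1}(\omega)$; being a component of the differential of a $C^1$ map it must be linear on the subspace $\mathcal{V}^*_xN=\{\omega\in T^*_xM:\omega|_{T_xN}=0\}$, i.e.\ $\mathcal{V}_xN=\mathcal{L}^{-1}(\mathcal{V}^*_xN)$ is a linear subspace of $T_xM$ for every $x\in N$. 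The direct computation carried out for Randers metrics in Example 3.2 (and its evident generalisation) shows that this forces $g_y$ to be independent of $y$, i.e.\ $F$ to be Riemannian.

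The step I expect to be the main obstacle is the first one: because $\mathcal{V}^*N$ has cone fibers and $\Exp$ is smooth only off the zero section, the correspondence $W\mapsto J$ and the boundary conditions defining $\widetilde{\mathfrak{T}}$ must be extracted with care in the canonical coordinates, and — what the statement really needs — the focal-radius bound $\epsilon(\eta)$ must be made genuinely uniform over a whole neighborhood $\mathcal{W}$, which is why the quantitative Rauch comparison (with uniform flag-curvature and co-Weingarten bounds) has to replace the soft compactness argument that would be available near $t=0$ in the Riemannian case. Pinning down precisely, in the ``only if'' direction, how linearity of $\mathcal{L}^{-1}|_{\mathcal{V}^*_xN}$ along $N$ forces $F$ to be Riemannian is the remaining delicate point.
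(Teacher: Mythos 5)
Your proof is correct in its main thrust, but it takes a genuinely different and considerably longer route than the paper's.

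The paper's argument is a one-paragraph coordinate computation: using the canonical coordinates $(u^\alpha,\xi_A)$ on $\mathcal{V}^*N$, it writes out $\Exp_{*(x,t\eta)}\frac{\partial}{\partial u^\alpha}=[\delta^i_\alpha+H(t,x,\eta)^i_\alpha]\frac{\partial}{\partial x^i}$ and $\Exp_{*(x,t\eta)}\frac{\partial}{\partial\xi_A}=g^{*Ak}_{(\eta)}[\delta^i_k+L(t,x,\eta)^i_k]\frac{\partial}{\partial x^i}$, observes that the remainder terms $H,L\to 0$ as $t\to 0^+$, and concludes that the Jacobian matrix $S(t,x,\eta)$ has the block-triangular limit $S(0,x_0,\eta_0)$ with $\det S(0,x_0,\eta_0)=\det g^{*AB}_{(\eta_0)}>0$; nonsingularity for $(t,x,\eta)$ near $(0,x_0,\eta_0)$ then follows by continuity of the determinant alone, with no curvature hypothesis whatsoever. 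You instead identify $\ker\Exp_{*t\xi}$ with transverse Jacobi fields vanishing at $\gamma_n(t)$, i.e.\ with focal points, and rule these out on $(0,\epsilon)$ by a Sturm/Rauch estimate using compactness-supplied bounds $|\mathbf{K}|\le\kappa$, $\|\mathfrak{A}^\xi\|\le C$. This works, and the $W\mapsto J$ correspondence, the reduction to $J\perp T$, and the two cases $J(0)\ne 0$ and $J(0)=0$ are all handled correctly, but it is heavier machinery than the statement needs, and it effectively re-proves the equivalence of Proposition 4.5 (focal point $\Leftrightarrow$ $\Exp_*$ singular). Since the paper derives Proposition 4.5 \emph{from} Theorem 4.1, you should be aware that you are re-establishing that equivalence from scratch, which is legitimate but duplicates work; in exchange you get the quantitative no-focal-point radius, which the paper doesn't extract at this stage.

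On the ``$\Exp$ is $C^1$ on $N$ iff $F$ Riemannian'' clause: your reading of the vertical one-sided derivative as $\omega\mapsto\mathcal{L}^{-1}(\omega)$ is exactly the content of the paper's observation that the limiting Jacobian carries the $\eta$-dependent factor $g^{*Ak}(\eta)$. However, your final step --- invoking ``Example 3.2 and its evident generalization'' to conclude that linearity of $\mathcal{L}^{-1}|_{\mathcal{V}^*_xN}$ forces $F$ Riemannian --- is a genuine gap: the Randers computation does not obviously generalize, and when $N$ has codimension $<m$ (e.g.\ a hypersurface, where $\mathcal{V}^*_xSN$ has only two elements), constancy of $g^{*Ak}$ on $\mathcal{V}^*_xSN$ does not by itself pin down the full fundamental tensor. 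You flag this yourself as ``the remaining delicate point,'' and indeed the paper's own treatment of this clause is also terse, but as written your argument does not close it.
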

\begin{proof}For the sake of clarity, we use $(x,\xi)$ to denote a point $\xi\in \mathcal {V}^*N$. Given $(x_0,\eta_0)\in \mathcal {V}^*SN\subset \mathcal {V}^*N$. Let $(U_N,u^\alpha)$ and $(U_M, x^i)$ be two local coordinate systems around $x_0$ and $i(x_0)$, respectively, such that $x^\alpha|_{U_N}=u^\alpha$ and $x^A|_{U_N}=0$. We can choose a small $\delta>0$ such that $\Exp(\mathcal {D})\subset U_M$, where $\mathcal {D}=\{(x,t\eta):t\in [0,\delta),\, (x,\eta)\in \mathcal {V}^*SU_N\}$. Let $(x^i,y^i)$ and $(u^\alpha,\xi_A)$ be local canonical coordinates on $TM$ and $\mathcal {V}^*N$, respectively.
For each $(x,t\eta)\in \mathcal {D}$, we have
\[
\Exp_{*(x,t\eta)}\frac{\partial}{\partial u^\alpha}=\left.\frac{\partial \exp(x,\mathcal {L}^{-1}(\xi))}{\partial u^\alpha}\right|_{x,\,\xi=t\eta}
=\left[\delta^i_\alpha+H(t,x,\eta)^i_\alpha\right]\frac{\partial}{\partial x^i},
\]
where
\[
H(t,x,\eta)^i_\alpha:=\left[\frac{\partial \exp^i}{\partial x^\alpha}(x,t\mathcal {L}^{-1}(\eta))-\delta^i_\alpha\right]+\frac{\partial\exp^i}{\partial y^k}(x,t\mathcal {L}^{-1}(\eta))\cdot\frac{\partial g^{*Ak}}{\partial u^\alpha}(x,\eta)\cdot t\eta_A.
\]
Likewise,
\[
\Exp_{*(x,t\eta)}\frac{\partial}{\partial \xi_A}=g^{*Ak}(\eta)\left[\delta^i_k+L(t,x,\eta)^i_k\right]\frac{\partial}{\partial x^i},\tag{2.5}
\]
where
\[
L(t,x,\eta)^i_k:=\frac{\partial\exp^i}{\partial y^k}(x,t\mathcal {L}^{-1}(\eta))-\delta^i_k.
\]
Clearly, $\underset{t\rightarrow 0^+}{\lim}H(t,x,\eta)^i_\alpha=\underset{t\rightarrow 0^+}{\lim}L(t,x,\eta)^i_k=0$, which together with (2.5) implies that $\Exp$ is $C^1$ on $N\subset \mathcal {V}^*N$ if and only if $F$ is Riemannian.
From above, the matrix of $\Exp_{*(x,t\eta)}$ is
\begin{equation*}
S(t,x,\eta)=\left(
  \begin{array}{cc}
   \delta_\alpha^\beta+H(t,x,\eta)^\beta_\alpha  &\ H(t,x,\eta)^A_\alpha\\
    g^{*A\alpha}_{(\eta)}+g^{*Ak}_{(\eta)}L(t,x,\eta)_k^\alpha  &\ g^{*AB}_{(\eta)}+g^{*Ak}_{(\eta)}L(t,x,\eta)_k^B\\
  \end{array}
\right).
\end{equation*}
Since $\det S(0,x_0,\eta_0)>0$, there exists a small $\epsilon(x_0,\eta_0)>0$ and an open neighborhood $\mathcal {W}$ of $(x_0,\eta_0)$ in $\mathcal {V}^*SN$ such that $\Exp_{*t\xi}$ is nonsingular for all $\xi\in \mathcal {W}$ and $t\in (0, \epsilon(x_0,\eta_0))$.
\end{proof}

Let $\pi_1:\mathcal {V}^*SN\rightarrow N$ be the natural projection. Thus, for each $x\in N$, $\pi_1^{-1}(x):=\mathcal {V}^*_xSN$ is a $(n-k-1)$-dimensional Minkowski unit sphere in $T^*_xM$. Given a local coordinate system $(u^\alpha,\theta_{\mathfrak{g}})$ on $\mathcal {V}^*SN$, where $(u^\alpha)$ are local coordinates of $N$, and for fixed $x=(u^\alpha)$, $(\theta_{\mathfrak{g}})$ are the local coordinates of $\mathcal {V}^*_xSN$. Hence, we obtain a local {\it cone coordinate system} $(t,u^\alpha,\theta_\mathfrak{g})$ on $\mathcal {V}^*N\backslash N$, that is, for $\xi\in \mathcal {V}^*N\backslash N$, $t=F^*(\xi)$ and $\xi/F^*(\xi)=(u^\alpha,\theta_\mathfrak{g})$.

Define a map $\E:[0,+\infty)\times \mathcal {V}^*SN\rightarrow M$ by $\E(t,\xi)=\Exp(t\xi)$.
It is easy to see that
\[
\E_{*(t,\xi)}\frac{\partial}{\partial t}=\left(\exp_{\pi(\xi)}\right)_{*t\mathcal {L}^{-1}(\xi)}\mathcal {L}^{-1}(\xi).
\]
In particular, $\E_{*(0,\xi)}=\mathcal {L}^{-1}(\xi)$.

\begin{proposition}
\[
J(t)=\E_{*(t,\xi)}\frac{\partial}{\partial u^\alpha}
\]
is a transverse Jacobi field along $\gamma_{\mathcal {L}^{-1}(\xi)}(t)$ such that $J(0)=\frac{\partial}{\partial u^\alpha}$ and $(\nabla^T_TJ)(0)+\mathfrak{A}^\xi(J(0))\in T^\bot_xN$, where $T:=\dot{\gamma}_{\mathcal {L}^{-1}(\xi)}(t)$ and $x:=\pi(\xi)$.
\end{proposition}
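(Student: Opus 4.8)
The plan is to realise $J$ as the variation field of a geodesic variation and then read off the three assertions from the defining relations of $\mathcal{V}^*N$ and of $\mathfrak{A}^\xi$. Set $n:=\mathcal{L}^{-1}(\xi)$ and $x:=\pi(\xi)$. In a cone coordinate system $(t,u^\alpha,\theta_\mathfrak{g})$ as above, let $s\mapsto\xi(s)$ be the integral curve through $\xi$ of the coordinate field $\partial/\partial u^\alpha$ on $\mathcal{V}^*SN$, so that $\xi(0)=\xi$, $F^*(\xi(s))\equiv 1$, and $c(s):=\pi(\xi(s))$ is the $u^\alpha$-coordinate line in $N$ with $\dot c(0)=\partial/\partial u^\alpha$. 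Put $\Lambda(s,t):=\E(t,\xi(s))=\exp_{c(s)}(t\,n(s))$, where $n(s):=\mathcal{L}^{-1}(\xi(s))$. For each fixed $s$ the curve $t\mapsto\Lambda(s,t)$ is the constant-speed geodesic with initial velocity $n(s)$; hence $\Lambda(0,\cdot)=\gamma_n$, $F(\partial_t\Lambda)\equiv F^*(\xi(s))=1$ on the whole $(s,t)$-domain, and by construction $\partial_s\Lambda(0,t)=\E_{*(t,\xi)}\frac{\partial}{\partial u^\alpha}=J(t)$. For $t>0$ the map $\Lambda$ is $C^\infty$; from the first-order expansions of $\E_*$ obtained in the proof of Theorem 4.1 (the $H$-expansion and equation (2.5)) one checks that $J$ extends continuously to $t=0$ with $J(0)=\partial/\partial u^\alpha\in T_xN$.

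Since every $\Lambda(s,\cdot)$ is a geodesic, the Finsler second-variation computation (cf.\ \cite{BCS}) shows that $J$ satisfies the Jacobi equation along $\gamma_n$ for $t>0$; as the Jacobi ODE along $\gamma_n$ is regular on $[0,a]$ and $J$ is bounded near $0$, $J$ extends to the smooth Jacobi field on $[0,a]$ with $J(0)=\partial/\partial u^\alpha$. This gives two of the four requirements defining $\mathfrak{T}$, namely that $J$ is a Jacobi field and $J(0)\in T_xN$. For $g_T(T,J)\equiv 0$, write $T:=\partial_t\Lambda$ (so $T|_{s=0}=\dot\gamma_n$): differentiating $g_{\partial_t\Lambda}(\partial_t\Lambda,\partial_t\Lambda)=F^2(\partial_t\Lambda)\equiv 1$ in $s$ via the almost-$g$-compatibility structure equation — whose Cartan-tensor correction drops because $A_{ijk}(\partial_t\Lambda)^i=0$ — and using torsion-freeness in the form $\nabla^T_{\partial_s}\partial_t\Lambda=\nabla^T_{\partial_t}\partial_s\Lambda$, one gets $g_T(T,\nabla^T_TJ)=0$ along $\gamma_n$. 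Since the reference field $T$ is $\nabla^T$-parallel along $\gamma_n$, the analogous correction drops there too, so $\frac{d}{dt}g_T(T,J)=g_T(\nabla^T_TT,J)+g_T(T,\nabla^T_TJ)=0$; and at $t=0$, $g_T(T,J)(0)=g_n(n,\partial/\partial u^\alpha)=\langle\xi,\partial/\partial u^\alpha\rangle=(i^*\xi)(\partial/\partial u^\alpha)=0$ because $\xi\in\mathcal{V}^*_xN$. Hence $g_T(T,J)\equiv 0$.

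It remains to verify $(\nabla^T_TJ)(0)+\mathfrak{A}^\xi(J(0))\in T^\bot_xN$. Using again the symmetry of covariant derivatives for the geodesic variation together with $\partial_t\Lambda(s,0)=n(s)=\mathcal{L}^{-1}(\xi(s))$, we obtain $(\nabla^T_TJ)(0)=\nabla^n_{\partial_s}[n(s)]\big|_{s=0}=\nabla^{\bar n}_{\partial/\partial u^\alpha}\bar n$, where $\bar n:=\mathcal{L}^{-1}\circ\bar\xi$ for any conormal field $\bar\xi$ on $N$ with $\bar\xi|_c=\xi(\cdot)$. Comparing with Definition 3.2, $\mathfrak{A}^\xi(J(0))=\mathfrak{A}^\xi(\partial/\partial u^\alpha)=-(\nabla^{\bar n}_{\partial/\partial u^\alpha}\bar n)^{\top_n}=-\big((\nabla^T_TJ)(0)\big)^{\top_n}$, so $(\nabla^T_TJ)(0)+\mathfrak{A}^\xi(J(0))=\big((\nabla^T_TJ)(0)\big)^{\bot_n}\in T^\bot_xN$, as required. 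Independence of the chosen variation is automatic since $J(t)=\E_{*(t,\xi)}\frac{\partial}{\partial u^\alpha}$ is intrinsic, and independence of the extension $\bar\xi$ is Proposition 3.3.

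I expect the only genuinely delicate point to be the bookkeeping at $t=0$: by Theorem 4.1, $\E$ fails to be $C^1$ along the zero section of $\mathcal{V}^*N$ unless $F$ is Riemannian, so the geodesic variation $\Lambda$ is only $C^\infty$ on $\{t>0\}$, and one must argue separately — via the explicit expansions from the proof of Theorem 4.1, regularity of the linear Jacobi ODE, and uniqueness of Jacobi fields — that $J$ and $\nabla^T_TJ$ extend continuously to $t=0$ with the stated limits (note also that the $\partial/\partial u^\alpha$ of the cone coordinates differs from that of the canonical coordinates by a fibre-direction term vanishing to first order in $t$, which is harmless in the limit). Everything else is the Finsler analogue of the classical Riemannian argument; the failure of metric compatibility of the Chern connection intervenes only through Cartan-tensor terms that vanish when contracted with the reference direction, so it causes no real trouble.
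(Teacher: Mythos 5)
Your proof is correct and is essentially the paper's argument: both realize $J$ as the variation field of the geodesic variation $\sigma(t,s)=\E(t,\xi(s))$ along the $u^\alpha$-coordinate line in $\mathcal{V}^*SN$, identify $(\nabla^T_TJ)(0)$ with $\nabla^{\bar n}_{\partial/\partial u^\alpha}\bar n$ via torsion-freeness of the Chern connection and then invoke Definition 3.2, and use $F^*(\xi(s))\equiv 1$ to obtain the $g_T$-orthogonality. You are somewhat more careful than the paper about the $t=0$ regularity of $\E$ and about explicitly verifying $g_T(T,J)(0)=0$ from $\xi\in\mathcal{V}^*_xN$, but the method is the same.
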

\begin{proof} Suppose $\xi=(u^\beta,\theta_\mathfrak{g})$.
Set $\gamma(s)=(u^\beta(s))$ and $\xi(s)=(u^\beta(s),\theta_\mathfrak{g})$, where $u^\beta(s)=u^\beta+s\cdot\delta^\beta_\alpha$, $s\in (-\epsilon,\epsilon)$. Consider the variation $\sigma(t,s)=\E(t,\xi(s))=\exp_{\gamma(s)}t\mathcal {L}^{-1}(\xi(s))$. Thus,
\[
J(t)=\left.\frac{\partial}{\partial s}\right|_{s=0}\sigma(t,s)=\E_{*(t,\xi)}\frac{\partial}{\partial u^\alpha}
\]
is a Jacobi field along $\gamma_{\mathcal {L}^{-1}(\xi)}(t)$. And $J(0)=\left.\frac{\partial}{\partial s}\right|_{s=0}\sigma(0,s)=\left.\frac{\partial}{\partial s}\right|_{s=0}\gamma(s)=\frac{\partial}{\partial u^\alpha}$.
Set $\mathcal {T}(t,s):=\frac{\partial}{\partial t}\sigma(t,s)$. Clearly, $\mathcal {T}(t,0)=T(t)$.
And we have
\[
(\nabla^T_TJ)(0)=\nabla^{\mathcal {L}^{-1}(\xi)}_{J(0)}\mathcal {T}(0,s)=\nabla^{\mathcal {L}^{-1}(\xi)}_{J(0)}\mathcal {L}^{-1}(\xi(s)),
\]
which implies $(\nabla^T_TJ)(0)-\mathfrak{A}^{\xi}(J(0))\in T^\bot_xN$. Since $F^*(\xi(s))=1$,
\[
g_T(T,(\nabla^T_TJ)(0))=g_{\mathcal {L}^{-1}(\xi)}(\mathcal {L}^{-1}(\xi),\nabla^{\mathcal {L}^{-1}(\xi)}_{J(0)}\mathcal {L}^{-1}(\xi(s)))=\frac{1}{2}J(0)(F^2(\mathcal {L}^{-1}(\xi(s))))=0.
\]
Hence, $J$ is a transverse Jacobi field along $\gamma_{\mathcal {L}^{-1}(\xi)}(t)$.\end{proof}

\begin{proposition}
\[
J(t)=\E_{*(t,\xi)}\frac{\partial}{\partial \theta_\mathfrak{g}}
\]
is a transverse Jacobi field along $\gamma_{\mathcal {L}^{-1}(\xi)}(t)$ such that $J(0)=0$ and $(\nabla^T_TJ)(0)=\mathcal {L}^{-1}_{*\xi}(\frac{\partial}{\partial\theta_\mathfrak{g}})$, where $T=\dot{\gamma}_{\mathcal {L}^{-1}(\xi)}$, $x=\pi(\xi)$ and $\mathcal {L}^{-1}_{*\xi}:T_\xi(T^*_xM\backslash0)\rightarrow T_{\mathcal {L}^{-1}(\xi)}(T_xM\backslash0)$ is the tangent map.
\end{proposition}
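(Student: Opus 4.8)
The plan is to run the argument of Proposition 4.2, the only change being that we now differentiate in the fibre direction, so the base point stays fixed. I would set $x := \pi(\xi)$ and choose a smooth curve $s \mapsto \xi(s)$ in $\mathcal{V}^*_xSN$ with $\xi(0) = \xi$ and $\frac{d}{ds}\big|_{s=0}\xi(s) = \frac{\partial}{\partial\theta_\mathfrak{g}}$; such a curve exists since $(\theta_\mathfrak{g})$ are coordinates on the fibre $\mathcal{V}^*_xSN$. Then consider the variation $\sigma(t,s) := \E(t,\xi(s)) = \exp_x\!\big(t\,\mathcal{L}^{-1}(\xi(s))\big)$. For each fixed $s$ the curve $t \mapsto \sigma(t,s)$ is a geodesic, so $J(t) := \frac{\partial}{\partial s}\big|_{s=0}\sigma(t,s) = \E_{*(t,\xi)}\frac{\partial}{\partial\theta_\mathfrak{g}}$ is a Jacobi field along $\gamma_{\mathcal{L}^{-1}(\xi)}$, and since $\sigma(0,s) \equiv x$ one gets $J(0) = 0$ at once.

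Next I would compute $(\nabla^T_T J)(0)$, where $T := \dot\gamma_{\mathcal{L}^{-1}(\xi)}$. Put $\mathcal{T}(t,s) := \frac{\partial}{\partial t}\sigma(t,s)$, so $\mathcal{T}(t,0) = T(t)$ and $\mathcal{T}(0,s) = \mathcal{L}^{-1}(\xi(s))$. Torsion-freeness of the Chern connection (the symmetry lemma, with reference vector $\mathcal{T}$) gives $\nabla_{\partial_t}\partial_s\sigma = \nabla_{\partial_s}\partial_t\sigma$, and at $(t,s) = (0,0)$ this reads $(\nabla^T_T J)(0) = \nabla_{\partial_s}\mathcal{T}(0,s)\big|_{s=0}$, the covariant derivative being along the $s$-curve $\sigma(0,\cdot)$. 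Since $\sigma(0,\cdot) \equiv x$ is constant its velocity vanishes, so the connection coefficients drop out and this covariant derivative is simply the ordinary derivative of the $T_xM$-valued curve $s \mapsto \mathcal{L}^{-1}(\xi(s))$ at $s = 0$, that is $\mathcal{L}^{-1}_{*\xi}\big(\frac{\partial}{\partial\theta_\mathfrak{g}}\big)$, as claimed.

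Finally I would verify transversality: $g_T(T,J) \equiv 0$, $J(0) \in T_xN$, and $(\nabla^T_T J)(0) + \mathfrak{A}^\xi(J(0)) \in T^\bot_xN$. Since $J$ is a Jacobi field, $g_T(T,J)$ is affine in $t$; its value at $0$ is $g_n(n, J(0)) = 0$, and its $t$-derivative at $0$ is $g_T\big(T,(\nabla^T_T J)(0)\big) = \tfrac{1}{2}\frac{d}{ds}\big|_{s=0}F^2\big(\mathcal{L}^{-1}(\xi(s))\big) = \tfrac{1}{2}\frac{d}{ds}\big|_{s=0}\big(F^*(\xi(s))\big)^2 = 0$ because $F^*(\xi(s)) = 1$ for all $s$; hence $g_T(T,J) \equiv 0$, exactly as in Proposition 4.2. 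As $J(0) = 0 \in T_xN$ we have $\mathfrak{A}^\xi(J(0)) = 0$, so it suffices to show $(\nabla^T_T J)(0) = \mathcal{L}^{-1}_{*\xi}\big(\frac{\partial}{\partial\theta_\mathfrak{g}}\big) \in T^\bot_xN$. For this I would use the elementary fact that the differential of the Legendre transformation at a nonzero vector $y$ equals $g_y$ (the Cartan-tensor terms cancel by homogeneity of $A$), so that $\mathcal{L}^{-1}_{*\xi}$ is the inverse of $g_n$ and $g_n\big(\mathcal{L}^{-1}_{*\xi}(w), X\big) = \langle w, X\rangle$ for all $w \in T^*_xM$ and $X \in T_xM$; differentiating $\langle \xi(s), X\rangle \equiv 0$ (valid for $X \in T_xN$ since $\xi(s) \in \mathcal{V}^*_xN$) at $s = 0$ then gives $\big\langle \frac{\partial}{\partial\theta_\mathfrak{g}}, X\big\rangle = 0$, whence $g_n\big((\nabla^T_T J)(0), X\big) = 0$ for every $X \in T_xN$. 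The one delicate point, which I expect to be the main obstacle, is precisely this last step: pinning down the identification of $\mathcal{L}^{-1}_{*\xi}$ with the inverse metric, and checking that the Cartan and torsion correction terms in the Chern connection genuinely vanish along the geodesic and at the frozen base point, so that the clean formula for $(\nabla^T_T J)(0)$ is justified.
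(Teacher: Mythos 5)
Your proof is correct and follows essentially the same route as the paper: the core step, namely the choice of the variation $\sigma(t,s)=\exp_{\pi(\xi)}\bigl(t\,\mathcal{L}^{-1}(\xi(s))\bigr)$ with $\xi(s)$ a fibre curve through $\xi$, is identical. Where you differ is only in the intermediate machinery. The paper reads the whole claim off the chain-rule formula $J(t)=\bigl(\exp_{\pi(\xi)}\bigr)_{*t\mathcal{L}^{-1}(\xi)}\bigl(t\,\mathcal{L}^{-1}_{*\xi}(\partial/\partial\theta_\mathfrak{g})\bigr)$ and then invokes the Gauss lemma to obtain $g_T(T,J)=0$; you instead compute $(\nabla^T_TJ)(0)$ by the symmetry lemma (torsion-freeness of the Chern connection with reference vector $\mathcal{T}$, plus the observation that the base $s$-curve is constant so the covariant derivative degenerates to an ordinary one), and you get $g_T(T,J)\equiv0$ by noting it is affine in $t$ with both value and $t$-derivative vanishing at $t=0$. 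Both mechanisms are valid, and the $t$-derivative computation is exactly the same identity $\tfrac{1}{2}\tfrac{d}{ds}\big|_{s=0}F^2(\mathcal{L}^{-1}(\xi(s)))=0$ that the paper uses before applying Gauss. Your proof is in fact slightly more complete than the paper's: you explicitly check the last defining condition of a transverse Jacobi field, $(\nabla^T_TJ)(0)=\mathcal{L}^{-1}_{*\xi}(\partial/\partial\theta_\mathfrak{g})\in T^\bot_xN$, whereas the paper defers that orthogonality to the short computation placed after Lemma 4.4 rather than inside this proof. The point you flag as delicate --- the identification $\mathcal{L}_{*y}=g_y$ --- is indeed the right justification and is unproblematic: it follows from Euler's homogeneity via $A_{ijk}y^k=0$, so no Cartan correction survives, and since the base curve $\sigma(0,\cdot)$ is constant no connection coefficients enter either.
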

\begin{proof}
Suppose that $\xi=(u^\alpha,\theta_\mathfrak{g})$. Set $\xi(s)=(u^\alpha,\theta_\mathfrak{g}(s))$, $s\in (-\epsilon,\epsilon)$, such that $\frac{d}{ds}|_{s=0}\xi(s)=\frac{\partial}{\partial \theta_\mathfrak{g}}$. Consider the variation
$\sigma(t,s)=\E(t,\xi(s))=\exp_{\pi(\xi)}t\mathcal {L}^{-1}(\xi(s))$. Clearly,
\[
J(t)=\E_{*(t,\xi)}\frac{\partial}{\partial \theta_\mathfrak{g}}=\left.\frac{\partial}{\partial s}\right|_{s=0}\sigma(t,s)=\left(\exp_{\pi(\xi)}\right)_{*t\mathcal {L}^{-1}(\xi)}t\mathcal {L}^{-1}_{*\xi}\left(\frac{\partial}{\partial \theta_\mathfrak{g}}\right).
\]
Since $F(\mathcal {L}^{-1}(\xi(s)))=1$,
\[
0=\left.\frac{d}{ds}\right|_{s=0}F^2(\mathcal {L}^{-1}(\xi(s)))=2g_{\mathcal {L}^{-1}(\xi)}\left(\mathcal {L}^{-1}(\xi),\mathcal {L}^{-1}_{*\xi}\left(\frac{\partial}{\partial \theta_\mathfrak{g}}\right)\right).
\]
By the Guass lemma, we have
\[
g_T(T,J)=g_{\mathcal {L}^{-1}(\xi)}\left(\mathcal {L}^{-1}(\xi),t\mathcal {L}^{-1}_{*\xi}\left(\frac{\partial}{\partial \theta_\mathfrak{g}}\right)\right)=0.
\]
\end{proof}

Note that $T_{(t,\xi)}(\mathcal {V}^*N)=\mathbb{R}\frac{\partial}{\partial t}\oplus T_\xi(\mathcal {V}^*SN)$, for all $t>0$ and $\xi\in \mathcal {V}^*SN$. As a trivial combination of Proposition 4.2 and Proposition 4.3 we get
\begin{lemma}Given $t_0>0$ and $\xi\in \mathcal {V}^*SN$.
If there exists some $X$ such that $\E_{*(t_0,\xi)}X=0$ , then  $X\in T_\xi(\mathcal {V}^*SN)$.
\end{lemma}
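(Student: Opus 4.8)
The plan is to decompose an arbitrary $X\in T_{(t_0,\xi)}(\mathcal{V}^*N)$ according to the direct sum $T_{(t_0,\xi)}(\mathcal{V}^*N)=\mathbb{R}\frac{\partial}{\partial t}\oplus T_\xi(\mathcal{V}^*SN)$ noted just before the statement, and show that the $\partial/\partial t$-component must vanish when $\E_{*(t_0,\xi)}X=0$. Write $X=a\,\frac{\partial}{\partial t}+W$ with $W\in T_\xi(\mathcal{V}^*SN)$, so that $0=\E_{*(t_0,\xi)}X=a\,\E_{*(t_0,\xi)}\frac{\partial}{\partial t}+\E_{*(t_0,\xi)}W$. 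By the formula recorded before Proposition 4.2, $\E_{*(t_0,\xi)}\frac{\partial}{\partial t}=\left(\exp_{\pi(\xi)}\right)_{*t_0\mathcal{L}^{-1}(\xi)}\mathcal{L}^{-1}(\xi)=T(t_0)$, the velocity of the geodesic $\gamma_{\mathcal{L}^{-1}(\xi)}$ at time $t_0$, which in particular is nonzero and satisfies $g_{T(t_0)}(T(t_0),T(t_0))=1$.

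The key step is then to observe that $\E_{*(t_0,\xi)}W$ is $g_T$-orthogonal to $T(t_0)$. Indeed, in the cone coordinate system $(t,u^\alpha,\theta_\mathfrak{g})$ on $\mathcal{V}^*N\backslash N$, any $W\in T_\xi(\mathcal{V}^*SN)$ is a linear combination of the coordinate vectors $\frac{\partial}{\partial u^\alpha}$ and $\frac{\partial}{\partial\theta_\mathfrak{g}}$, and by Proposition 4.2 and Proposition 4.3 each of $\E_{*(t_0,\xi)}\frac{\partial}{\partial u^\alpha}$ and $\E_{*(t_0,\xi)}\frac{\partial}{\partial\theta_\mathfrak{g}}$ is the value at $t_0$ of a \emph{transverse} Jacobi field $J$ along $\gamma_{\mathcal{L}^{-1}(\xi)}$, hence satisfies $g_T(T,J)\equiv 0$ on $[0,a]$ by the definition of $\mathfrak{T}$. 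Consequently $g_{T(t_0)}\!\left(T(t_0),\E_{*(t_0,\xi)}W\right)=0$. Pairing the relation $0=a\,T(t_0)+\E_{*(t_0,\xi)}W$ with $T(t_0)$ via $g_{T(t_0)}$ gives $0=a\cdot g_{T(t_0)}(T(t_0),T(t_0))=a$, so $a=0$ and $X=W\in T_\xi(\mathcal{V}^*SN)$, as claimed.

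I do not anticipate a genuine obstacle here: once the direct-sum decomposition and the two propositions are in hand, the argument is a one-line orthogonality computation. The only point requiring minor care is the bookkeeping that the span of $\{\frac{\partial}{\partial u^\alpha},\frac{\partial}{\partial\theta_\mathfrak{g}}\}$ at $\xi$ is exactly $T_\xi(\mathcal{V}^*SN)$ (so that every $W$ is covered by Propositions 4.2--4.3), and the fact that $g_T(T,\cdot)$ applied to a transverse Jacobi field vanishes identically in $t$, which is immediate from the Lagrange identity / the first-variation computation already used in the proofs of those propositions.
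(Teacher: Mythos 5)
Your proof is correct and is exactly the "trivial combination of Proposition 4.2 and Proposition 4.3" that the paper invokes without writing out: decompose $X = a\,\partial/\partial t + W$, note that $\E_{*(t_0,\xi)}(\partial/\partial t) = T(t_0)$ is nonzero while $\E_{*(t_0,\xi)}W$ is $g_T$-orthogonal to $T(t_0)$ because it is the value of a transverse Jacobi field, and conclude $a=0$.
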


Let $\xi(s)$ defined as in the proof of Proposition 4.3. Then $\mathcal {L}^{-1}(\xi(s))$ is a unit normal vector for all $s$ and therefore,
\[
0=\left.\frac{d}{ds}\right|_{s=0}g_{\mathcal {L}^{-1}(\xi(s))}\left(\mathcal {L}^{-1}(\xi(s)),\frac{\partial}{\partial u^\alpha}\right)=g_{\mathcal {L}^{-1}(\xi)}\left(\mathcal {L}^{-1}_{*\xi}\left(\frac{\partial}{\partial \theta_\mathfrak{g}}\right),\frac{\partial}{\partial u^\alpha}\right).
\]
Hence, for each $\xi\in \mathcal {V}^*_xSN$, we have
\begin{align*}
(T_xM,g_{n})&=\mathbb{R}\cdot n\oplus n^\bot=\mathbb{R}\cdot n\oplus T_xN\oplus (T^\bot_xN\cap n^\bot)\\
&=\mathbb{R}\cdot n\oplus \Sp\left \{\frac{\partial}{\partial u^\alpha}\right \} \oplus\Sp\left \{\mathcal {L}^{-1}_{*\xi}\left(\frac{\partial}{\partial \theta_\mathfrak{g}}\right)\right \},
\end{align*}
where $n:=\mathcal {L}^{-1}(\xi)$. For convenience, set $e_\alpha:=\frac{\partial}{\partial u^\alpha}$ and $e_\mathfrak{g}:= \mathcal {L}^{-1}_{*\xi}\left(\frac{\partial}{\partial \theta_\mathfrak{g}}\right)$.

Given $\xi\in \mathcal {V}^*_xSN$, let $n$, $e_\alpha$, $e_\mathfrak{g}$ be defined as above.
Denote by $P_{t;n}$ the parallel translation along
$\gamma_n$ from $T_{\gamma_n(0)}M$ to $T_{\gamma_n(t)}M$ (with
respect to the Chern connection) for all $t\geq 0$. Set $T=\dot{\gamma}_n(t)$. Let
$R_T:=R_T(\cdot,T)T$ and
\[
\mathcal {R}(t,n ):=P^{-1}_{t;n}\circ R_T \circ P_{t;n}:
n^{\bot}\rightarrow n^{\bot}.\]
Let $\mathcal {A}(t,n)$ be the solution of the matrix (or linear
transformation) ordinary differential equation on $n^\bot$:
\[
\left \{
\begin{array}{lll}
&\mathcal {A}{''}+\mathcal {R}(t,y)\mathcal {A}=0,\\
&\mathcal {A}(0,n)e_\alpha=e_\alpha,\ \mathcal {A}'(0,n)e_\alpha=(\nabla^T_TJ_\alpha)(0)\\
 &\mathcal {A}(0,n)e_\mathfrak{g}=0,\ \mathcal {A}'(0,n)e_\mathfrak{g}=e_\mathfrak{g}.
\end{array}
\right.
\]
where $\mathcal {A}'=\frac{d}{dt}\mathcal {A}$ and $J_\alpha(t)=\E_{*(t,\xi)}e_\alpha$. Note that $\gamma_n(t)=P_{t;n}n$. Thus, for each $X\in n^\bot$,
\[
g_{P_{t;n}n}(P_{t;n}n,P_{t;n}\mathcal {A}(t,n)X)=g_n(n,\mathcal {A}(t,n)X)=0,
\]
that is, $P_{t;n}\mathcal {A}(t,n)X$ is a transverse Jacobi filed along $\gamma_n$. In particular, $\E_{*(t,\xi)}e_\alpha=P_{t;n}\mathcal {A}(t,n)e_\alpha$ and  $\E_{*(t,\xi)}\mathcal {L}_{*\xi}\,e_\mathfrak{g}=P_{t;n}\mathcal {A}(t,n)e_\mathfrak{g}$. Set $\mathcal {A}e_{\mathbbm{a}}=:\mathcal {A}_{\mathbbm{a}}^{\mathbbm{b}}e_{\mathbbm{b}}$ and $\det \mathcal {A}:=\det \mathcal {A}_{\mathbbm{a}}^{\mathbbm{b}}$, where ${\mathbbm{a}}=\alpha,\mathfrak{g}$. Clearly, $\det \mathcal {A}$ is independent of choice of basis for $n^\bot$. Then we have the following
\begin{proposition}
Given $\xi\in \mathcal {V}^*SN$, let $n=\mathcal {L}^{-1}(\xi)$. The following statements are mutually equivalent:

(1) $\gamma_n(t_0)$, $0<t_0<\infty$ is a focal point of $N$ along $\gamma_n$.

(2) $\Exp_{*t_0\xi}$ is singular.

(3) $\E_{*(t_0,\xi)}$ is singular.

(4) $\det \mathcal {A}(t_0, n)=0$.
\end{proposition}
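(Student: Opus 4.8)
The plan is to establish the chain of equivalences by first identifying $(1)\Leftrightarrow(4)$ through the transverse Jacobi field theory developed above, then connecting the differential-of-exponential statements $(2)$ and $(3)$ to the Jacobi field picture using Proposition 4.2, Proposition 4.3 and Lemma 4.4. First I would note that, by construction, for each $t>0$ the columns $\E_{*(t,\xi)}e_\alpha = P_{t;n}\mathcal{A}(t,n)e_\alpha$ and $\E_{*(t,\xi)}\mathcal{L}_{*\xi}e_{\mathfrak{g}} = P_{t;n}\mathcal{A}(t,n)e_{\mathfrak{g}}$ are precisely the images under parallel translation of $\mathcal{A}(t,n)$ applied to a basis of $n^\bot$, and these $m-1$ fields are exactly the transverse Jacobi fields with the standard initial data that span $\mathfrak{T}$ (this uses the already-established $\dim\mathfrak{T} = m-1$). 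Consequently a nontrivial transverse Jacobi field vanishing at $t_0$ exists if and only if the linear map $\mathcal{A}(t_0,n)\colon n^\bot\to n^\bot$ has nontrivial kernel, i.e. $\det\mathcal{A}(t_0,n)=0$; since $P_{t_0;n}$ is an isomorphism, this is the equivalence $(1)\Leftrightarrow(4)$.

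Next I would handle $(3)\Leftrightarrow(4)$. Using $T_{(t,\xi)}(\mathcal{V}^*N) = \mathbb{R}\frac{\partial}{\partial t}\oplus T_\xi(\mathcal{V}^*SN)$ and the decomposition $T_\xi(\mathcal{V}^*SN)$ into the $e_\alpha$-directions and the $\theta_{\mathfrak{g}}$-directions, the matrix of $\E_{*(t,\xi)}$ in the adapted frame is block-triangular: on the $\frac{\partial}{\partial t}$-line it equals $T(t)\neq 0$, which is $g_T$-orthogonal to all the transverse Jacobi fields, so $\E_{*(t_0,\xi)}$ is singular exactly when its restriction to $T_\xi(\mathcal{V}^*SN)$ is singular. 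That restriction sends a basis to $P_{t_0;n}\mathcal{A}(t_0,n)$ applied to a basis of $n^\bot$ (absorbing the invertible factor $\mathcal{L}^{-1}_{*\xi}$ on the $\theta_{\mathfrak{g}}$-block), hence is singular iff $\det\mathcal{A}(t_0,n)=0$. This gives $(3)\Leftrightarrow(4)$.

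For $(2)\Leftrightarrow(3)$ I would simply observe that $\E(t,\xi) = \Exp(t\xi)$, so $\E_{*(t_0,\xi)}$ and $\Exp_{*t_0\xi}$ differ only by the reparametrization $\xi\mapsto t_0\xi$ and the inclusion of the radial direction; more precisely $\Exp_{*t_0\xi}$ restricted to the radial direction is nonzero and $g$-orthogonal to the image of $T_\xi(\mathcal{V}^*SN)$ by the same Gauss-lemma computation, and on the complementary directions $\Exp_{*t_0\xi}$ and $\E_{*(t_0,\xi)}$ have the same rank. So both fail to be injective simultaneously. I expect the main obstacle to be bookkeeping the non-linearity of $\mathcal{L}$: the Legendre transform is only a homeomorphism (a diffeomorphism off the zero section), so one must be careful that $\mathcal{L}^{-1}_{*\xi}$ is a genuine linear isomorphism on $T_\xi(T^*_xM\backslash 0)$ — which it is, since we work away from $0$ — and that the identification of the $\theta_{\mathfrak{g}}$-block with $e_{\mathfrak{g}}$-directions in $n^\bot$ is faithful; once that is pinned down the block-triangular structure and Lemma 4.4 (which guarantees any kernel vector of $\E_{*(t_0,\xi)}$ already lies in $T_\xi(\mathcal{V}^*SN)$) make the equivalences routine.
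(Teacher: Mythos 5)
Your proof is correct and follows essentially the same approach as the paper: identify $\E_{*(t,\xi)}$ on the transverse directions with $P_{t;n}\mathcal{A}(t,n)$ via Propositions 4.2--4.3, use Lemma 4.4 to control the radial direction, and (for (2)$\Leftrightarrow$(3)) exploit the factorization $\E=\Exp\circ\mathscr{F}$ with $\mathscr{F}(t,\xi)=t\xi$ a diffeomorphism of $(0,\infty)\times\mathcal{V}^*SN$ onto $\mathcal{V}^*N\backslash N$. The one point you should make explicit is that the $m-1$ fields $\{J_{\mathbbm{a}}\}$ are linearly independent, hence actually form a basis of $\mathfrak{T}$; the paper supplies this from Theorem 4.1 (nonsingularity of $\E_{*(t,\xi)}$ for small $t>0$), whereas you leave it implicit in ``span $\mathfrak{T}$.''
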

\begin{proof} From above, we have (3) $\Leftrightarrow$ (4).

Define a map $\mathscr{F}:(0,+\infty)\times \mathcal {V}^*SN\rightarrow \mathcal {V}^*N\backslash N$ by $\mathscr{F}(t,\xi)=t\xi$. Then $\mathscr{F}$ is a diffeomorphism, since $\mathscr{F}^{-1}(\xi)=(F^*(\xi),\xi/F^*(\xi))$. Clearly, $\Exp_*\circ\mathscr{F}_*=\E_*$, which implies (2) $\Leftrightarrow$ (3).

By Theorem 4.1, there exists $\epsilon(\xi)>0$ such that $\E_{*(t,\xi)}$ is nonsingular for $0<t\leq\epsilon(\xi)$. Thus, $\{\E_{*(t,\xi)}e_\alpha,\E_{*(t,\xi)}\mathcal {L}_{*\xi}\,e_\mathfrak{g}\}$ form a basis for the space of the transverse Jacobi fields along $\gamma_n(t)$, $0\leq t\leq\epsilon(\xi)$.

Suppose $\gamma_n(t_0)$ is a focal point. Then there exists a nontrivial transverse Jacobi field $J$ along $\gamma_n$ such that $J(t_0)=0$. From above, $J(t)=C^\alpha\E_{*(t,\xi)}e_\alpha+C^\mathfrak{g}\E_{*(t,\xi)}\mathcal {L}_{*\xi}\,e_\mathfrak{g}$,
for $0\leq t\leq \epsilon(\xi)$ and therefore, for $t\geq 0$. Here, $C^i$'s are constants not all zero. Then $J(t_0)=0$ implies (3). And it follows from Lemma 4.4 that (1) $\Leftrightarrow$ (3).
\end{proof}

By Theorem 4.1 and Proposition 4.5, we give the following definition.
\begin{definition}
Given $\xi\in \mathcal {V}^*SN$, the focal value $c_f(\xi)$ is defined by
\[
c_f(\xi):=\sup\{r>0: \text{ no point }\gamma_{\mathcal {L}^{-1}(\xi)}(t), \,0<t<r \text{ is focal point }\}
\]
\end{definition}

\begin{proposition}
The function $c_f:\mathcal {V}^*SN\rightarrow (0,+\infty]$ is lower semicontinuous. That is,
\[
\underset{\xi\rightarrow \xi_0}{\lim\inf}{\,c_f(\xi)}\geq c_f(\xi_0).
\]
\end{proposition}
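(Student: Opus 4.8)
The plan is to argue by contradiction, exploiting the characterization of focal points given in Proposition 4.5(4), namely that $\gamma_{\mathcal{L}^{-1}(\xi)}(t_0)$ is focal to $N$ precisely when $\det\mathcal{A}(t_0,n)=0$, where $n=\mathcal{L}^{-1}(\xi)$. Suppose the conclusion fails at some $\xi_0\in\mathcal{V}^*SN$: then there is a sequence $\xi_j\to\xi_0$ with $\lim_j c_f(\xi_j)=:c<c_f(\xi_0)$ (we may pass to a subsequence so the limit exists; note $c$ could a priori be $0$ or finite, but in any case $c<c_f(\xi_0)\le+\infty$). Pick $r$ with $c<r<c_f(\xi_0)$. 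For $j$ large we have $c_f(\xi_j)<r$, so there is a focal value $t_j:=c_f(\xi_j)\in(0,r)$ with $\det\mathcal{A}(t_j,n_j)=0$, where $n_j:=\mathcal{L}^{-1}(\xi_j)$; passing to a further subsequence, $t_j\to t_\infty\in[c,r]\subset(0,c_f(\xi_0))$ (and one checks $t_\infty>0$ using Theorem 4.1, which gives a uniform lower bound $\epsilon(\xi_0)>0$ on an open neighborhood of $\xi_0$ below which $\E_{*(t,\xi)}$ is nonsingular).

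The core step is a continuous-dependence argument for $\det\mathcal{A}$. First, $\mathcal{L}^{-1}$ is a diffeomorphism off the zero section (Section 2), so $n_j\to n_\infty:=\mathcal{L}^{-1}(\xi_0)$ in $TM\setminus 0$, and the geodesics $\gamma_{n_j}$ converge to $\gamma_{n_0}$ together with all derivatives on compact $t$-intervals, by smooth dependence of the geodesic flow on initial conditions. Consequently the parallel transport operators $P_{t;n_j}$, the curvature endomorphisms $\mathcal{R}(t,n_j)$, and — crucially — the initial data $e_\alpha(\xi_j),e_\mathfrak{g}(\xi_j),(\nabla^T_TJ_\alpha)(0)$ appearing in the ODE defining $\mathcal{A}(t,n_j)$ all vary continuously in $j$; here one uses that $e_\mathfrak{g}=\mathcal{L}^{-1}_{*\xi}(\partial/\partial\theta_\mathfrak{g})$ and the co-Weingarten data $\mathfrak{A}^{\xi_j}$ depend smoothly on $\xi_j$ (Proposition 3.2, Definition 4 in Section 2 of the preliminaries). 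By continuous dependence of solutions of linear ODEs on coefficients and initial conditions, $\mathcal{A}(t,n_j)\to\mathcal{A}(t,n_0)$ uniformly on $[0,r]$, hence $\det\mathcal{A}(t,n_j)\to\det\mathcal{A}(t,n_0)$ uniformly on $[0,r]$. Letting $j\to\infty$ in $\det\mathcal{A}(t_j,n_j)=0$ yields $\det\mathcal{A}(t_\infty,n_0)=0$.

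This gives the contradiction: by Proposition 4.5, $\det\mathcal{A}(t_\infty,n_0)=0$ with $0<t_\infty<c_f(\xi_0)$ says $\gamma_{n_0}(t_\infty)$ is a focal point of $N$ along $\gamma_{n_0}$ at parameter $t_\infty$ strictly less than the focal value $c_f(\xi_0)$, contradicting Definition 4.6. Hence no such $\xi_0$ and sequence exist, and $\liminf_{\xi\to\xi_0}c_f(\xi)\ge c_f(\xi_0)$.

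The main obstacle I expect is bookkeeping the continuity of the \emph{frame} along which $\mathcal{A}$ is defined rather than the abstract operator $\mathcal{A}$ itself: the splitting $n^\bot=\mathbb{R}\cdot n\oplus\Sp\{e_\alpha\}\oplus\Sp\{e_\mathfrak{g}\}$ is $\xi$-dependent, so to make sense of ``$\mathcal{A}(t,n_j)\to\mathcal{A}(t,n_0)$'' one should either transport everything to a fixed model space via $P_{t;n_j}$ composed with a fixed trivialization near $x_0=\pi(\xi_0)$, or observe directly that $\det\mathcal{A}$ is basis-independent (as noted in the excerpt just before Proposition 4.5) and set up the ODE in coordinates $(x^i,y^i)$ around $i(x_0)$ so all coefficients are genuine functions of $(t,\xi_j)$. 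Once the problem is phrased in a fixed chart, the remaining work is the standard Grönwall-type estimate for linear ODEs and is routine.
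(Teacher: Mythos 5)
Your argument is correct, but it takes a different route from the paper's. You argue by contradiction from a sequence $\xi_j\to\xi_0$, extract focal values $t_j=c_f(\xi_j)$, and then invoke continuity of $(t,n)\mapsto\det\mathcal{A}(t,n)$ to pass to the limit and find a focal point of $\gamma_{n_0}$ before $c_f(\xi_0)$. The paper instead gives a \emph{direct} covering argument that never touches $\mathcal{A}$: it observes that nonsingularity of $\Exp_{*t\xi}$ is an open condition in $(t,\xi)$ (Step~1 of its proof), covers the compact interval $[\varepsilon(\xi_0)/2,r]\subset(0,c_f(\xi_0))$ by finitely many such product neighborhoods, intersects the $\xi$-neighborhoods, and so produces an entire neighborhood $\mathcal{U}$ of $\xi_0$ on which $c_f(\cdot)>r+\epsilon_r$; letting $r\uparrow c_f(\xi_0)$ finishes. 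Both proofs ultimately rest on the same germ of continuity, and Proposition~4.5 makes the two detectors of focal points ($\Exp_*$ singular vs.\ $\det\mathcal{A}=0$) interchangeable; but the paper's choice to work with $\Exp_*$ sidesteps exactly the bookkeeping you flag at the end as the ``main obstacle.'' The differential $\Exp_*$ is a single smooth matrix-valued function of $(t,\xi)$ once you fix charts on $\mathcal{V}^*N$ and $M$ (this is already what Theorem~4.1 writes down), so openness of the full-rank locus is immediate, with no need to track the $\xi$-dependent splitting $n^\perp=T_xN\oplus(T^\perp_xN\cap n^\perp)$ or the $\xi$-dependent frame $\{e_\alpha,e_\mathfrak{g}\}$. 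Your plan is salvageable — transporting to a fixed chart, or using display~(6) to reduce continuity of $(\det\mathcal{A})^2$ to continuity of $\det g_T(J_\mathbbm{a},J_\mathbbm{b})$ with $J_\mathbbm{a}=\E_{*(t,\xi)}(\cdot)$ — but that reduction is essentially re-deriving the statement that $\E_*$ (equivalently $\Exp_*$) is continuous in $(t,\xi)$, which is where the paper starts. Your observation that $t_\infty>0$ follows from the uniform $\varepsilon(\xi_0)$ of Theorem~4.1 is correct and matches how the paper anchors its argument at the bottom end of the interval.
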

\begin{proof}

\textbf{Step 1:} Suppose $\Exp_*$ is nonsingular at $t\xi$, where $t>0$ and $\xi\in \mathcal {V}^*SN$. Then one can obtain a neighborhood $U$ of $\xi$ in $\mathcal {V}^*SN$ and a small $\epsilon>0$ such that

(i) For each $\eta\in U$ and $s\in (t-\epsilon,t+\epsilon)$, $\Exp_*$ is also nonsingular at $s\eta$.

(ii) In a local trivialization of $\mathcal {V}^*SN$, $U$ is the Cartesian product of an open ball (for the position $u^\alpha$) in $\mathbb{R}^k$ with an open "disk" (for the position $\theta_\mathfrak{g}$) on the standard unit sphere $\mathbb{S}^{n-k-1}$.

\textbf{Step 2:} Given $\xi_0\in \mathcal {V}^*SN$ and $0<r<c_f(\xi_0)$. Let ${\varepsilon}(\xi_0)$ and $\mathcal {W}$ be as in Theorem 4.1. If $r\leq {\varepsilon}(\xi_0)/2$, then we take $\epsilon_r={\varepsilon}(\xi_0)/2$ and $\mathcal {U}=\mathcal {W}$.
Now suppose
$r>{\varepsilon}(\xi_0)/2$. For each $t\in [{\varepsilon}(\xi_0)/2,r]$, one has a neighborhood $U_t$ of $\xi$ and a interval $I_t=(t-\epsilon_t,t+\epsilon_t)$ with the properties stated in Step 1. Then one can find finitely many $\{I_{t_s}\}_{s=1}^k$ such that $\cup_{s}I_{t_s}\supset [{\varepsilon}(\xi_0)/2,r]$. Without loss of generality, we suppose that $t_1<\cdots<t_k$ and $t_k=r$ (so $\epsilon_{t_k}=\epsilon_r$). Now let $\mathcal {U}:=\cap_{i}U_{t_i}\cap\mathcal {W}$. From above, it is easy to see that $\Exp_{*(x,t\xi)}$ is not singular for all $t\in (0,r+\epsilon_r)$ and $(x,\xi)\in
\mathcal{U}$, i.e., $c_f(\xi)> r+\epsilon_r$. Clearly, $\lim_{r\rightarrow c_f(\xi_0)}\epsilon_r=0$.
Hence,
$\underset{\xi\rightarrow\xi_0}{\lim\inf}c_f(\xi)\geq r+\epsilon_r$.
We complete the proof by letting $r\rightarrow c_f(\xi_0)$.\end{proof}

Let $\xi,n,T$ and $e_{\mathbbm{a}},\mathbbm{a}=\alpha,\mathfrak{g}$ be as before. Now, we continue to investigate $\det\mathcal {A}(t,n)$. For simplicity, set $J_\mathbbm{a}(t):=P_{t;n}\mathcal {A}(t,n)e_{\mathbbm{a}}$. A direct calculation yields
\begin{align*}
&\det\left[g_T(J_\mathbbm{a}(t),J_\mathbbm{b}(t))\right]=\det\left[g_n(\mathcal {A}(t,n)e_{\mathbbm{a}},\mathcal {A}(t,n)e_\mathbbm{b})\right]\\
=&(\det\mathcal {A})^2\cdot\det g_n(e_\alpha,e_\beta)\cdot\det g_n(e_\mathfrak{g},e_\mathfrak{h})\tag{6}
\end{align*}
By the Lagrange identity and L'Hospital's rule, we have
\begin{align*}
\underset{t\rightarrow 0^+}{\lim}\frac{g_T\left(J_\alpha(t),J_\mathfrak{g}(t)\right)}{t^2}&=\underset{t\rightarrow 0^+}{\lim}\frac{g_T\left(\nabla^T_TJ_\alpha(t),J_\mathfrak{g}(t)\right)}{t}=\underset{t\rightarrow 0^+}{\lim}g_T\left(\nabla^T_TJ_\alpha(t),\nabla^T_TJ_\mathfrak{g}(t)\right)\\
&=g_n\left((\nabla^T_TJ_\alpha)(0),e_\mathfrak{g}\right).
\end{align*}
And it is easy to see that $\lim_{t\rightarrow 0^+}\frac{g_T(J_\mathfrak{g},J_\mathfrak{h})}{t^2}=g_n(e_\mathfrak{g},e_\mathfrak{h})$. Hence, we have
\begin{align*}
&\underset{t\rightarrow 0^+}{\lim}\frac{\det g_T(J_i,J_k)(t)}{t^{2(m-k-1)}}
=\underset{t\rightarrow 0^+}{\lim}\det\left(
\begin{array}{cc}
   g_{T}(J_\alpha,J_\beta) & \  \frac{g_T\left(J_\alpha(t),J_\mathfrak{g}(t)\right)}{t^2}\\
    g_T\left(J_\mathfrak{h}, J_\beta\right)  &\  \frac{g_T\left(J_\mathfrak{g}(t),J_\mathfrak{h}(t)\right)}{t^2}).\\
  \end{array}
\right)\\
=&\det g_n(e_\alpha,e_\beta)\det g_n(e_\mathfrak{g},e_\mathfrak{h}),
\end{align*}
which implies that
\[
\underset{t\rightarrow 0^+}{\lim}\frac{\det \mathcal {A}(t,n)}{t^{m-k-1}}=1.
\]
Moreover, we have the following
\begin{theorem}Given $\xi\in \mathcal {V}^*SN$. Let $n=\mathcal {L}^{-1}(\xi)$ and $H_\xi=\text{tr}_{g^*_\xi}(h_\xi)$. If the flag curvature $\mathbf{K}(\dot{\gamma}_n(t);\cdot)\geq \delta$, then $c_f(\xi)\leq \min\{\zeta,\pi/\sqrt{\delta}\}$ and
\[
\det \mathcal {A}(t,n)\leq \left(\mathfrak{s}'_\delta-\frac{H_\xi}{k}\mathfrak{s}_\delta\right)^k(t)\cdot\mathfrak{s}^{n-k-1}_\delta(t), \text{ for }t\in [0, c_f(\xi)],
\]
where $\zeta$ is the first positive zero of
\[
\left(\mathfrak{s}'_\delta-\frac{H_{\xi}}{k}\mathfrak{s}_\delta\right)^k(t)
\]
(should such a zero exist; otherwise, set $\zeta=+\infty$).
\end{theorem}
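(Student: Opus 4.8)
The plan is to run a standard matrix-Riccati / Rauch comparison argument on the transverse Jacobi operator $\mathcal{A}(t,n)$, adapted to the conormal setting. Write $\mathbf{A}(t) := \mathcal{A}(t,n)$, a linear endomorphism of the $(m-1)$-dimensional space $n^\bot$ satisfying $\mathbf{A}'' + \mathcal{R}(t,n)\mathbf{A} = 0$. On the interval $(0, c_f(\xi))$ the matrix $\mathbf{A}(t)$ is nonsingular by Proposition 4.5, so the Riccati operator $\mathbf{U}(t) := \mathbf{A}'(t)\mathbf{A}(t)^{-1}$ is well-defined there and satisfies $\mathbf{U}' + \mathbf{U}^2 + \mathcal{R}(t,n) = 0$. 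Since $\mathbf{K}(\dot\gamma_n(t);\cdot)\ge\delta$ gives $\mathcal{R}(t,n)\ge\delta\cdot\mathrm{id}$ on $n^\bot$ in the sense of quadratic forms, a comparison with the scalar Riccati equation $u' + u^2 + \delta = 0$ forces $\mathrm{tr}\,\mathbf{U}(t) \le (m-1)\,\mathfrak{s}'_\delta(t)/\mathfrak{s}_\delta(t)$ wherever both sides are defined, and from $(\det\mathbf{A})'/\det\mathbf{A} = \mathrm{tr}\,\mathbf{U}$ one gets, after integrating from a small $\varepsilon>0$ and using $\det\mathbf{A}(t)/t^{m-k-1}\to 1$ as $t\to 0^+$ proved just above, the crude bound $\det\mathbf{A}(t)\le \mathfrak{s}_\delta^{m-1}(t)$. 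That already shows $c_f(\xi)\le\pi/\sqrt{\delta}$ when $\delta>0$, since $\mathfrak{s}_\delta$ vanishes at $\pi/\sqrt\delta$; but it does not yet produce the sharper split exponent, and the work is to do the comparison in a way that remembers the splitting $n^\bot = T_xN \oplus (T^\bot_xN\cap n^\bot)$.

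The sharper estimate comes from treating the two blocks asymmetrically, exactly as in the Heintze–Karcher argument. Decompose $\mathbf{A}(t)$ with respect to the $g_n$-orthogonal splitting $n^\bot = \mathrm{Span}\{e_\alpha\}\oplus\mathrm{Span}\{e_\mathfrak{g}\}$; the initial conditions in the ODE for $\mathcal{A}$ say that the "$e_\alpha$-block" starts at the identity with velocity governed by $\mathfrak{A}^\xi$ (via $(\nabla^T_TJ_\alpha)(0)$), while the "$e_\mathfrak{g}$-block" starts at $0$ with unit velocity. The key point is that $\det\mathbf{A}$ can be compared against the determinant of the model Jacobi operator built from Example 3.4: when $\mathbf{K}\equiv\delta$ and $\mathfrak{A}^\xi$ is a multiple $\lambda\cdot\mathrm{id}$ with $\lambda = H_\xi/k$, the transverse Jacobi fields are $(\mathfrak{s}'_\delta - \lambda\mathfrak{s}_\delta)(t)E(t)$ on $T_xN$ and $\mathfrak{s}_\delta(t)F(t)$ on $T^\bot_xN\cap n^\bot$, so the model determinant is exactly $(\mathfrak{s}'_\delta - \tfrac{H_\xi}{k}\mathfrak{s}_\delta)^k(t)\cdot\mathfrak{s}_\delta^{m-k-1}(t)$. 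To get from the model to the general case, set $f(t) := \det\mathbf{A}(t)$ and consider $\varphi(t) := f(t)\big/\big[(\mathfrak{s}'_\delta - \tfrac{H_\xi}{k}\mathfrak{s}_\delta)^k(t)\,\mathfrak{s}_\delta^{m-k-1}(t)\big]$; show $\varphi(0^+)=1$ using the limit computed above together with $\lim_{t\to0^+}g_T(J_\alpha,J_\beta)(t) = g_n(e_\alpha,e_\beta)$ and $\lim_{t\to0^+}g_T(J_\alpha,J_\mathfrak{g})(t)/t^2 = g_n((\nabla^T_TJ_\alpha)(0),e_\mathfrak{g})$, and then show $\varphi'\le 0$ on $(0,\min\{\zeta,c_f(\xi)\})$. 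The inequality $\varphi'\le0$ reduces to a logarithmic-derivative comparison: $\mathrm{tr}\,\mathbf{U}(t) \le (m-k-1)\tfrac{\mathfrak{s}'_\delta}{\mathfrak{s}_\delta}(t) + k\,\tfrac{d}{dt}\log\!\big|\mathfrak{s}'_\delta - \tfrac{H_\xi}{k}\mathfrak{s}_\delta\big|(t)$, which one proves by the index form: $-\log f'$-type monotonicity follows because for $t$ before the first focal point the transverse Jacobi field through any given terminal value minimizes the index $I$ (Theorem 3.6), and plugging the model vector fields of Example 3.4 as competitors into $I$, together with $H_\xi = \sum_\alpha\lambda_\alpha$ and the Cauchy–Schwarz inequality $\sum_\alpha\lambda_\alpha^2\ge H_\xi^2/k$ applied to the eigenvalues of $\mathfrak{A}^\xi$, yields precisely the stated bound.

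Finally, the claim $c_f(\xi)\le\zeta$ follows because, by Remark 3.8 together with Theorem 3.7, a focal point cannot be avoided past the first zero of the comparison function: if $\zeta < c_f(\xi)$ were to hold, then $\varphi$ would stay positive and finite up to $\zeta$ while its denominator vanishes there, forcing $f=\det\mathbf{A}$ to vanish at $\zeta$, contradicting $\zeta<c_f(\xi)$ via Proposition 4.5(4). Hence $c_f(\xi)\le\min\{\zeta,\pi/\sqrt\delta\}$ and $\det\mathcal{A}(t,n)\le(\mathfrak{s}'_\delta - \tfrac{H_\xi}{k}\mathfrak{s}_\delta)^k(t)\,\mathfrak{s}_\delta^{m-k-1}(t)$ on $[0,c_f(\xi)]$, with the boundary cases $k=0$ and $k=m$ handled directly (for $k=0$ the $e_\alpha$-block is empty and one recovers $\det\mathcal{A}\le\mathfrak{s}_\delta^{m-1}$). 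The main obstacle is the careful bookkeeping in the block decomposition near $t=0$: the conormal exponential map is merely $C^1$ (indeed not $C^1$ at the zero section unless $F$ is Riemannian, by Theorem 4.1), so one must argue entirely with the Jacobi-field ODE and its small-$t$ asymptotics rather than with derivatives of $\Exp$, and one must check that the $T$-curvature terms hidden inside $h_\xi(X,X) = -\Lambda^\xi(X) - \mathbf{T}_n(X)$ and inside $(\nabla^T_TJ_\alpha)(0)$ are exactly the ones that make $\varphi(0^+)=1$ rather than some other constant.
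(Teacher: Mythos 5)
Your overall strategy -- use the index form to prove a monotonicity for $\det\mathcal{A}$ against a constant-curvature model, then pass from the eigenvalue data of $\mathfrak{A}^\xi$ to the mean curvature $H_\xi$ -- matches the paper's, and you correctly identify the split $n^\bot = T_xN\oplus(T_xN^\bot\cap n^\bot)$ and the model Jacobi fields of Example 3.4. But there is a genuine gap in the step where you compare directly against the \emph{averaged} model (eigenvalue $H_\xi/k$ in every tangential direction) and try to absorb the discrepancy with the Cauchy--Schwarz inequality $\sum_\alpha\lambda_\alpha^2\ge H_\xi^2/k$.

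The index-form comparison only closes if the model fields $Y_\mathbbm{a}$ you feed into $I$ make the two boundary terms cancel: writing $I(Y,Y)=-h_\xi(Y(0),Y(0))+\bigl[g_T(\nabla^T_TY,Y)\bigr]_0^s$, you need $g_T(\nabla^T_TY,Y)(0)=-h_\xi(Y(0),Y(0))$ so that $\sum_\mathbbm{a}I(Y_\mathbbm{a},Y_\mathbbm{a})$ collapses to $\operatorname{tr}(\mathcal{A}_\delta'\mathcal{A}_\delta^{-1})(s)$. That cancellation is exactly the statement that the model's initial slope on $T_xN$ is $-\mathfrak{A}^\xi$ -- i.e.\ the model must carry the \emph{actual} eigenvalues $\lambda_\alpha$, not their average. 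With the averaged model you are left with a remainder $\sum_\alpha(\bar\lambda-\lambda_\alpha)\sum_\mathbbm{a}(C_\mathbbm{a}^\alpha)^2$, where the $C_\mathbbm{a}^\mathbbm{b}$ are the (time-dependent, unknown) coefficients that normalize $Y_\mathbbm{a}(s)=\bar J_\mathbbm{a}(s)$; this has no fixed sign and is not controlled by $\sum\lambda_\alpha^2\ge H_\xi^2/k$. (That inequality is precisely the $O(t)$ Taylor coefficient of the final determinant bound near $t=0$, not a substitute for it.)

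The paper's proof sidesteps this by keeping the exact model: it solves $\mathcal{A}_\delta''+\delta\mathcal{A}_\delta=0$ with the \emph{same} initial conditions as $\mathcal{A}$, so the boundary terms cancel identically, the index comparison yields $(\log\det\mathcal{A})'\le(\log\det\mathcal{A}_\delta)'$, and integration gives $\det\mathcal{A}\le\det\mathcal{A}_\delta=\mathfrak{s}_\delta^{m-k-1}\prod_\alpha(\mathfrak{s}'_\delta-\lambda_\alpha\mathfrak{s}_\delta)$. Only at this last line does it invoke the arithmetic--geometric mean inequality on the product $\prod_\alpha(\mathfrak{s}'_\delta-\lambda_\alpha\mathfrak{s}_\delta)\le\bigl(\mathfrak{s}'_\delta-\tfrac{H_\xi}{k}\mathfrak{s}_\delta\bigr)^k$. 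So your order of operations should be reversed: exact-eigenvalue model comparison via the index form first, AM--GM at the determinant (not the log-derivative) level last. Once you do that, your claim $\varphi'\le0$ does become true (it factors through the ratio $\det\mathcal{A}_\delta/(\mathfrak{s}'_\delta-\bar\lambda\mathfrak{s}_\delta)^k\mathfrak{s}_\delta^{m-k-1}$, which is decreasing by concavity of $\lambda\mapsto\tfrac{d}{dt}\log(\mathfrak{s}'_\delta-\lambda\mathfrak{s}_\delta)$), but that extra monotonicity is more than is needed and not what the index form directly gives. The asymptotics $\det\mathcal{A}(t)/t^{m-k-1}\to1$, the deduction $c_f(\xi)\le\zeta_0\le\min\{\zeta,\pi/\sqrt\delta\}$, and the remark about $\Exp$ being only $C^1$ at the zero section are all fine.
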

\begin{proof}
Fix some positive number $r<c_f(\xi)$. Let $J_\mathbbm{a}:=P_{t;n}\mathcal {A}e_{\mathbbm{a}}$, for $\mathbbm{a}=\alpha,\mathfrak{g}$. For $s\in (0,r)$, by (6), we have
\[
\frac{(\det \mathcal {A})'}{\det \mathcal {A}}(s)=\frac12\frac{\left({\det g_T(J_\mathbbm{a},J_\mathbbm{b})}\right)'}{{\det g_T(J_\mathbbm{a},J_\mathbbm{b})}}(s).
\]
Note that $\{J_\mathbbm{a}(t)\}$ is a basis for the space $\mathfrak{T}$ of transverse Jacobi fields along $\gamma_n(t)$, $0\leq t\leq s$. Let $\{\bar{J}_\mathbbm{a}(t)\}$ be another $n-1$
transverse Jacobi fields such that $\{T(s),\bar{J}_\mathbbm{a}(s)\}$ is a $g_{T}$-orthonormal basis. Then $\{\bar{J}_\mathbbm{a}(t)\}$ is also a basis for $\mathfrak{T}$. It is easy to check that
\[
\frac{(\det \mathcal {A})'}{\det \mathcal {A}}(s)=\frac12\frac{\left({\det g_T(J_\mathbbm{a},J_\mathbbm{b})}\right)'}{{\det g_T(J_\mathbbm{a},J_\mathbbm{b})}}(s)=\frac12\frac{\left({\det g_T(\bar{J}_\mathbbm{a},\bar{J}_\mathbbm{b})}\right)'}{{\det g_T(\bar{J}_\mathbbm{a},\bar{J}_\mathbbm{b})}}(s).\tag{*1}
\]
A direct calculation shows
\begin{align*}
\frac{\left({\det g_T(\bar{J}_\mathbbm{a},\bar{J}_\mathbbm{b})}\right)'}{{\det g_T(\bar{J}_\mathbbm{a},\bar{J}_\mathbbm{b})}}(s)
=2\cdot\underset{\mathbbm{a}}{\sum}\left({ g_T(\nabla^T_T\bar{J}_\mathbbm{a},\bar{J}_\mathbbm{a})}\right)'(s)=2\cdot\underset{\mathbbm{a}}{\sum} I_{[0,s]}(\bar{J}_\mathbbm{a},\bar{J}_\mathbbm{a}),\tag{*2}
\end{align*}
where $I_{[0,s]}$ is the index restricted to $\gamma_n(t)$, $0\leq t\leq s$.

Consider the solution $\mathcal {A}_\delta(t)$ to the
matrix differential equation in $n^\bot$:
\[
\mathcal {A}_\delta''+k\mathcal {A}_\delta=0,
\]
with the same initial conditions as $\mathcal {A}(t)$. Let $\{f_\alpha\}$ be a $g_n$-orthonormal basis for $T_xN$ consisting of eigenvectors
of the Weingarten map $\mathfrak{A}^\xi$, with respective eigenvalues $\lambda_\alpha$. And let $\{f_{\mathfrak{g}}\}$ be an orthonormal basis for $n^\bot\cap T^\bot_xN$. Then we have
\begin{align*}
\mathcal {A}_\delta f_\alpha&=(\mathfrak{s}'_\delta-\lambda_\alpha \mathfrak{s}_\delta)(t)\cdot f_\alpha+C_\alpha^\mathfrak{g} \mathfrak{s}_\delta(t)\cdot f_\mathfrak{g},\\
 \mathcal {A}_\delta f_\mathfrak{g}&=\mathfrak{s}_\delta(t)\cdot f_\mathfrak{g},
\end{align*}
where $C_\alpha^\mathfrak{g}$'s are constants determined by the initial conditions of $\mathcal {A}_k(t)$. It is easy to see that
\[
\det \mathcal {A}_\delta(t)=\mathfrak{s}_\delta^{m-k-1}(t)\cdot\overset{k}{\underset{\alpha=1}{\Pi}}(\mathfrak{s}'_\delta-\lambda_\alpha \mathfrak{s}_\delta)(t).
\]

Suppose $r<\zeta_0$, where $\zeta_0$ is the first positive zero of $\det \mathcal {A}_\delta(t)$. Thus, $\{T,P_{t;n}\mathcal {A}_\delta(t)f_\mathbbm{a}\}$ is a basis at $\gamma_n(t)$ for all $t\in (0,r]$. Hence, one can find constants $C_\mathbbm{a}^\mathbbm{b}$ such that $\det C^\mathbbm{b}_\mathbbm{a}\neq 0$ and $\bar{J}_\mathbbm{a}(s)=C_\mathbbm{a}^\mathbbm{b}P_{s;n}\mathcal {A}_\delta(s)f_\mathbbm{b}$.
Consider the vector fields $Y_\mathbbm{a}(t):=C_\mathbbm{a}^\mathbbm{b}P_{t;n}\mathcal {A}_\delta(t)f_\mathbbm{b}$. Clearly, $\nabla^T_T\nabla^T_TY_\mathbbm{a}+\delta Y_\mathbbm{a}=0$, $Y_\mathbbm{a}(s)=\bar{J}_\mathbbm{a}(s)$ and $g_T(T,Y_\mathbbm{a})=0$. Theorem 3.6 then yields
\begin{align*}
&\underset{\mathbbm{a}}{\sum}I_{[0,s]}(\bar{J}_\mathbbm{a},\bar{J}_\mathbbm{a})\leq \underset{\mathbbm{a}}{\sum}I_{[0,s]}(Y_\mathbbm{a},Y_\mathbbm{a})\\
\leq&\underset{\mathbbm{a}}{\sum}\left(-h_\xi(Y_\mathbbm{a}(0),Y_\mathbbm{a}(0))+\int^s_0g_T(\nabla^T_TY_\mathbbm{a},\nabla^T_TY_\mathbbm{a})-\delta g_T(Y_\mathbbm{a},Y_\mathbbm{a})dt\right)\\
=&\underset{\mathbbm{a}}{\sum}g_T(\nabla^T_TY_\mathbbm{a},Y_\mathbbm{a})(s).\tag{*3}
\end{align*}

Note that $\{l_\mathbbm{a}:=C^\mathbbm{b}_\mathbbm{a}\cdot f_\mathbbm{b}\}$ is also a basis for $n^\bot$. Set $\mathcal {A}_\delta l_\mathbbm{a}=:(\mathcal {A}_\delta)_\mathbbm{a}^\mathbbm{b}\cdot l_\mathbbm{b}$. Since $g_T(Y_\mathbbm{a},Y_\mathbbm{b})(s)=\delta_{\mathbbm{a}\mathbbm{b}}$, $(\mathcal {A}_\delta)_\mathbbm{b}^\mathbbm{c}(s) \cdot g_n(l_\mathbbm{a},l_\mathbbm{c})=(\mathcal {A}^{-1}_\delta)_\mathbbm{a}^\mathbbm{b}(s)$ and
\[
\underset{\mathbbm{a}}{\sum}g_T(\nabla_T^TY_\mathbbm{a},Y_\mathbbm{a})(s)=\text{tr}(\mathcal {A}'_\delta\cdot \mathcal {A}^{-1}_\delta)(s)=\frac{(\det \mathcal {A}_\delta)'}{\det \mathcal {A}_\delta}(s).\tag{*4}
\]
(*1) together with (*2), (*3) and (*4) furnishes
\[
\frac{(\det \mathcal {A})'}{\det \mathcal {A}}(s)\leq \frac{(\det \mathcal {A}_\delta)'}{\det \mathcal {A}_\delta}(s).
\]
Since $\det \mathcal {A}(t)\sim t^{n-k-1} \sim \det \mathcal {A}_\delta (t)$, $\det \mathcal {A}(t)\leq \det \mathcal {A}_\delta (t)$ for all $t\in [0,r]$, which implies that $c_f(\xi)\leq \zeta_0$. The arithmetic-geometric mean inequality now yields
\[
\det \mathcal {A}(t)\leq \det \mathcal {A}_\delta(t)\leq \left(\mathfrak{s}'_\delta-\frac{\underset{\alpha}{\sum}\lambda_\alpha}{k}\mathfrak{s}_\delta\right)^k(t)\cdot\mathfrak{s}^{n-k-1}_\delta(t),
\]
for $t\in [0, c_f(\xi)]$.\end{proof}

\section{proof of Theorem 1.1}
Given $\xi\in \mathcal {V}^*SN\backslash N$, choose a local coordinate system $(u^\alpha,\theta_\mathfrak{g})$ around $\xi$ such that $(t,u^\alpha,\theta_\mathfrak{g})$ is a local
cone coordinate system $(t,u^\alpha,\theta_\mathfrak{g})$ on $\mathcal {V}^*N\backslash N$. Set $\pi_1(\xi)=x$. It is easy to check that $\mathcal {L}^{-1}$ is an isometry from $(T^*_xM\backslash0,g^*_x)$ to $(T_xM\backslash0,g_x)$. Denote by $d\nu_x$ the Riemannian volume form on $\mathcal {V}^*_xSN$ induced by $g^*_x$. Let $n$ and $e_\mathbbm{a}$, $\mathbbm{a}=\alpha,\mathfrak{g}$ be defined as before. Since
\[
g^*_{\xi}\left(\frac{\partial}{\partial \theta_\mathfrak{g}},\frac{\partial}{\partial \theta_\mathfrak{h}}\right)=((\mathcal {L}^{-1})^*g_n)\left(\frac{\partial}{\partial \theta_\mathfrak{g}},\frac{\partial}{\partial \theta_\mathfrak{h}}\right)=g_n(e_\mathfrak{g},e_\mathfrak{h}),
\]
we have $d\nu_x(\xi)=\sqrt{\det g_{n}(e_\mathfrak{g},e_\mathfrak{h})}d\Theta$, where $d\Theta=\wedge_{\mathfrak{g}}d\theta_{\mathfrak{g}}$.
We define a $n$-form $\varpi$ on $(0,+\infty)\times\mathcal {V}^*SN$ by
\[
\varpi(t,\xi)=e^{-\tau(\dot{\gamma}_n(t))}\det \mathcal {A}(t,n)dt\wedge \sqrt{\det g_n(e_\alpha,e_\beta)}du^1\wedge\cdots\wedge du^k\wedge d\nu_x(\xi).
\]
A direct calculation shows that $\varpi$ is independent of the choice of chart.

Given an arbitrary point $p\in M\backslash N$, there exists a unit speed minimizing geodesic $\gamma_n$ from $N$ to $p$. By Remark 1, we have $\E(D)=M$, where $D:=\{(t,\xi): \xi\in \mathcal {V}^*SN ,\ 0\leq t\leq c_f(\xi)\}$.

Moreover, for each $x_0=E(t_0,\xi_0)\in M$ with $0<t<c_f(\xi_0)$, by Proposition 4.7, there exists an open set $\mathcal {Q}(t_0,\xi_0)=(t_0-\varepsilon,t_0+\varepsilon)\times \mathcal {W}(\xi_0)$ such that $\E|_{\mathcal {Q}(t_0,\xi_0)}:\mathcal {Q}(t_0,\xi_0)\rightarrow \E(\mathcal {Q}(t_0,\xi_0))$ is a diffeomorphism.
Hence, $(t\circ \E|_{\mathcal {Q}}^{-1},u^\alpha\circ \E|_{\mathcal {Q}}^{-1},{\theta}_\mathfrak{g}\circ \E|_{\mathcal {Q}}^{-1})$ is a local coordinate system on $\E(\mathcal {Q}(t_0,\xi_0))$. For simplicity, we still use $(t,u^\alpha,{\theta}_\mathfrak{g})$ or $(t,\xi)$ to denote this coordinate system. In this case,
\[
\left.\frac{\partial}{\partial t}\right|_{(t,\xi)}=P_{t;n}n,\ \left.\frac{\partial}{\partial u^\alpha}\right|_{(t,\xi)}=P_{t;n}\mathcal {A}e_\alpha,\ \left.\frac{\partial}{\partial {\theta}_\mathfrak{g}}\right|_{(t,\xi)}=P_{t;n}\mathcal {A} e_\mathfrak{g},
\]
where $n$ and $e_\mathbbm{a}$, $\mathbbm{a}=\alpha,\mathfrak{g}$ are defined as before.
And denote by $\det g_{\frac{\partial}{\partial t}}(\E(t,\xi))$ the determinant of $g_{\frac{\partial}{\partial t}}$.

Given a volume form $d\mu$ on $M$. It follows from (6) that
\begin{align*}
\E|_{\mathcal {Q}(t_0,\xi_0)}^*d\mu=:&\sigma(t,u^\alpha, \theta_\mathfrak{g})dt\wedge du^1\wedge\cdots\wedge du^k\wedge d\Theta\\
=&\frac{\sigma(t,u^\alpha, \theta_\mathfrak{g})}{\sqrt{\det g_{\frac{\partial}{\partial t}}(\E(t,\xi))}}\sqrt{\det g_{\frac{\partial}{\partial t}}(\E(t,\xi))}dt\wedge du^1\wedge\cdots\wedge du^k\wedge d\Theta\\
=&\varpi.\tag{4.2}
\end{align*}

\begin{proof}[ \textbf{Proof of Theorem 1.1}] By the argument above, we have $\mu(M)=\mu(D_d^0)$, where $D_d^0:=\{(t,\xi):\xi\in \mathcal {V}^*SN,\, 0< t\leq \min\{d,c_f(\xi)\} \}$.
Let $D_1=\{(t,\xi):\xi\in \mathcal {V}^*SN,\,0<t<d<c_f(\xi)\}$, $D_2:=\{(t,\xi):\xi\in \mathcal {V}^*SN,\,0<t<c_f(\xi)<d\}$ and $D_3:=\{(d,\xi): \xi\in \mathcal {V}^*SN,\,d=c_f(\xi)\}$. Sard's theorem then yields $\mu(M)=\mu(\E (D_1))+\mu(\E (D_2))$.

For each $(t,\xi)\in D_s$, $s=1,2$, from above, one can find an open neighborhood $\mathcal {Q}(t,\xi)\subset D_s$ of $(t,\xi)$ such that $\E|_{\mathcal {Q}(t,\xi)}:\mathcal {Q}(t,\xi)\rightarrow \E(\mathcal {Q}(t,\xi))$ is a diffeomorphism. Hence, there is a countable covering $\{\mathcal {Q}(t_i,\xi_i)\}$ of $D_1\cup D_2$. For simplicity, set $\mathcal {Q}_i:=\mathcal {Q}(t_i,\xi_i)$ and $E_i:=E|_{\mathcal {Q}_i}$. Note that $\{E(\mathcal {Q}_i)\}$ is also a open covering of $E(D_1\cup D_2)$. Let $\{\rho_i\}$ be a partition of unity subordinate to $\{E(\mathcal {Q}_i)\}$. And define a sequence of nonnegative continuous functions $\varrho_i:D_1\cup D_2\rightarrow \mathbb{R}$ by
\[
\varrho_i(t,\xi):=\left \{
\begin{array}{ll}
&\rho_i\circ \E_i, \  (t,\xi)\in \mathcal {Q}_i\\
&0,\ \text{else},
\end{array}
\right.
\]
Given $(t,\xi)\in D_1\cup D_2$,
\[
\underset{i}{\sum}\varrho_i(t,\xi)=\underset{\{i:(t,\xi)\in \mathcal {Q}_i\}}{\sum}\varrho_i(t,\xi)=\underset{\{i:(t,\xi)\in \mathcal {Q}_i\}}{\sum}\rho_i(E(t,\xi))\leq \underset{i}{\sum}\rho_i(E(t,\xi))=1,
\]
From above, we have
\begin{align*}
\mu(M)&=\underset{i}{\sum}\int_{\E_i(\mathcal {Q}_i)}\rho_i\cdot d\mu=\underset{i}{\sum}\int_{\mathcal {Q}_i}(\rho_i\circ\E_i)\cdot\varpi\\
&=\underset{\{i:\mathcal {Q}_i\subset D_1\}}{\sum}\int_{\mathcal {Q}_i}\varrho_i\cdot\varpi+\underset{\{i:\mathcal {Q}_i\subset D_2\}}{\sum}\int_{\mathcal {Q}_i}\varrho_i\cdot\varpi\\
&\leq \underset{\{i:\mathcal {Q}_i\subset D_1\}}{\sum}\int_{D_1}\varrho_i\cdot\varpi+\underset{\{i:\mathcal {Q}_i\subset D_2\}}{\sum}\int_{D_2}\varrho_i\cdot\varpi\\
&\leq \int_{D_1}\varpi+\int_{D_2}\varpi= \int_{D_1\cup D_2}\varpi.\tag{7}
\end{align*}

\textbf{Case 1:} $k=0$, i.e., $N=\{x\}$. Note that $\mathcal {V}^*_xSN=S^*_xM$ and $\mathcal {L}:(T_xM\backslash0,g_x)\rightarrow (T^*_xM\backslash0,g^*_x)$ is an isometry. For convenience, we also use $d\nu_x$ to denote the Riemannian volume form induced by $g_x$ on $S_xM$. It now follows Theorem 4.8 that
\[
\mu(M)\leq \int_{S_xM}e^{-\tau(\dot{\gamma}_y(t))}d\nu_x(y) \int^{d}_0\mathfrak{s}^{m-1}_\delta(t)dt.
\]

\textbf{Case 2:} Let $d\mu$ and $d\bar{\mu}$ denote either the Busemann-Hausdorff volume forms or Holmes-Thompson volume forms induced by $F$ and $F|_N$, respectively. Set $d\bar{\mu}=:\sigma_Ndu^1\wedge\cdots \wedge du^k$.
Hence,
\begin{align*}
\mu(M)
&\leq \int^{\min\{d,c_f(\xi)\}}_0 \left(\mathfrak{s}'_\delta-\frac{H_\xi}{k}\mathfrak{s}_\delta\right)^k(t)\cdot\mathfrak{s}^{m-k-1}_\delta(t)dt\int_{\mathcal {V}^*_xSN}e^{-\tau\left(\dot{\gamma}_{\mathcal {L}^{-1}(\xi)}(t)\right)} d\nu_x(\xi)\\
&\cdot\int_N \sqrt{ \det  g_{\mathcal {L}^{-1}(\xi)}\left(\frac{\partial}{\partial u^\alpha},\frac{\partial}{\partial u^\beta}\right)}du^1\cdots du^k(x).
\end{align*}
Choose $\xi_0\in \mathcal {V}^*SN$ such that $H_{\xi_0}=\min_{\xi\in \mathcal {V}^*SN}H_\xi$. Denote by $\zeta(\xi)$ the first positive zero of $\left(\mathfrak{s}'_\delta-\frac{H_{\xi}}{k}\mathfrak{s}_\delta\right)(t)$. Theorem 4.8 yields that $c_f(\xi)\leq \zeta(\xi)\leq \zeta(\xi_0)$ for all $\xi\in \mathcal {V}^*SN$.
By Proposition 7.1 and Proposition 7.3, we have
\begin{align*}
\mu(M)\leq c_{m-k-1}\cdot\Lambda_F^{(3m+k)/2}\cdot\bar{\mu}(N)\cdot\int^{\min\left\{d,\,\zeta(\xi_0)\right\}}_0
\left(\mathfrak{s}'_\delta-\frac{H_{\xi_0}}{k}\mathfrak{s}_\delta\right)^k(t)\cdot\mathfrak{s}^{m-k-1}_\delta(t)dt.
\end{align*}
\end{proof}

\begin{proof}[\textbf{Proof of Corollary 1.2}]
Without loss of generality, we can suppose that $N$ is a unit speed geodesic. Then for each $\xi\in \mathcal {V}^*SN$, we have
\[
-H_\xi=[g_n(X,X)]^{-1}\mathbf{T}_n(X)\leq l\frac{\left[\sqrt{g_n(X,X)}-g_n(X,n)\right]^2F(n)}{g_n(X,X)}= l,
\]
where $n=\mathcal {L}^{-1}(\xi)$ and $X=\frac{\partial}{\partial u}$ is the tangent vector field of $N$. Then we have
\[
\mu(M)\leq c_{m-2}\cdot\Lambda_F^{(3m+1)/2}\cdot\ell\cdot\left[\frac{\mathfrak{s}^{m-1}_\delta\left({\min\left\{d,\frac{\pi}{2\sqrt{\delta}}\right\}}\right)}{m-1}+ l\cdot\int_0^{d}\mathfrak{s}^{m-1}_\delta(t) dt\right].
\]

\end{proof}

\section{Randers metric}
Let $(M,F)$ be a compact Randers manifold and let $\gamma(t)$, $0\leq t\leq \ell$, be a unit speed closed geodesic in $M$. Set $b:=\sup_{x\in \gamma([0,\ell])}\|\beta\|_{\alpha}$ and $b_1:=\sup_{x\in \gamma([0,\ell])}\|\nabla\beta\|_{\alpha}$.

For each $t$, there exits a local coordinate $(u,x^A)$ such that $\frac{\partial}{\partial u}=\dot{\gamma}(t)$ and $x^A|_{\gamma(t)}=0$. Given $\xi\in \mathcal {V}_{\gamma(t)}SN$, let $n=\mathcal {L}^{-1}(\xi)$. It is easy to check that $e^{-\tau_{BH}(\dot{\gamma}_n(t))}\leq (1+b)^{(m+1)/2}$ and $e^{-\tau_{HT}(\dot{\gamma}_n(t))}\leq (1-b)^{-(m+1)/2}$.

From Example 1, we have
\[
g_n\left(\frac{\partial}{\partial u},\frac{\partial}{\partial u}\right)=\frac{1}{\alpha(n)}\left[\alpha^2\left(\frac{\partial}{\partial u}\right)-\frac{\langle\frac{\partial}{\partial u},n\rangle^2}{\alpha^2(n)}\right]=\frac{F(-\frac{\partial}{\partial u})}{1-\beta(n)}\leq \frac{1+b}{(1-b)^2}.
\]
By the formula of T-curvature, we have
\[
T_n\left(\frac{\partial}{\partial u}\right)\leq \frac{b_1(2b^3+5b^2-2b+7)}{2(1-b)^3}
\]
Hence, we have the following
\begin{theorem}
Let $(M,F)$ be a compact Randers manifold with $\mathbf{K}\geq \delta$ and let $\gamma$ be a closed geodesic in $M$. Set $b:=\sup_{x\in M}\|\beta\|_{\alpha}$ and $b_1:=\sup_{x\in M}\|\nabla\beta\|_{\alpha}$. Then
\[
L_F(\gamma)\geq \frac{(1-b)^{\frac{m+2}{2}}}{c_{m-2}(1+b)^{\frac{1}{2}}\mathfrak{S}(b,b_1,\delta,d,m)}\max\left\{\frac{\mu_{BH}(M)}{(1+b)^{\frac{m+1}{2}}},(1-b)^{\frac{m+1}{2}}\vol_\alpha(M)\right\},
\]
where
\[
\mathfrak{S}(b,b_1,\delta,d,m)=\frac{\mathfrak{s}_\delta^{m-1}\left(\min\left\{d,\frac{\pi}{2\sqrt{\delta}}\right\}\right)}{m-1}+\frac{b_1(2b^3+5b^2-2b+7)}{2(1-b)^3}\int_0^{d}\mathfrak{s}^{m-1}_\delta(t) dt,
\]
and $\vol_\alpha$ is the Riemannian volume of $M$ induced by $\alpha$.
\end{theorem}
\begin{remark}
In fact, we can take $b:=\sup_{x\in \gamma}\|\beta\|_{\alpha}$ and $b_1:=\sup_{x\in \gamma}\|\nabla\beta\|_{\alpha}$.
\end{remark}

\section{Non-Riemannian example}
In \cite{CH}, Cheeger showed the existence of the lower bound for the length of sample close geodesics in a closed Riemannian manifold in terms of an upper bound for the diameter and lower bounds for the volume and the curvature. However, this is not true in Finsler geometry. First, we introduce the notations in this section.

Let $M=\mathbb{S}^n\times\mathbb{S}$ $(n\geq2)$, and $\alpha$ be the canonical Riemannian product metric on $M$. Let $\beta_\epsilon$ be as in Example 1. Choose a smooth function $\phi$ defined on $(-1,1)$ such that
\begin{align*}
\text{(i)}&\underset{s\rightarrow -1}{\lim}\phi(s)=0,\ \underset{s\in (-1,1)}{\sup}\phi(s)<+\infty;\\
\text{(ii)}&\phi(s)-s\phi'(s)+(t^2-s^2)\phi''(s)>0,\ \text{for }t\in [0,1), \|s\|\leq t<1.
\end{align*}
Define $F_\epsilon:=\alpha\phi(s)$, $s=\beta/\alpha$.
It follows from \cite{CS} that $F_\epsilon$ is a Finsler metric on $M$ for all $\epsilon\in [0,1)$.
Note that $\beta$ is parallel corresponding $\alpha$. A direct calculation shows the spray of $F_\epsilon$ is the same as the one of $\alpha$. Hence, it is easy to check that the flag curvature $\mathbf{K}_{F_\epsilon}\geq 0$ for all $\epsilon\in [0,1)$. Let $(r,\theta^\alpha,t)$ be the local coordinate system of $M$, where $(r,\theta^\alpha)$ (resp. $t$) is the spherical coordinates of $\mathbb{S}^n$ (resp. $\mathbb{S}$). Thus, $\gamma(t)=(0,0,-t)$ is a geodesic of $F_\epsilon$. (i) yields that $L_{F_\epsilon}(\gamma)\rightarrow 0$, where $\epsilon\rightarrow 1$.
For convergence, set
\[
\mathscr{T}(s):=\phi(s)\cdot(\phi(s)-s\phi'(s))^{n-1}[\phi(s)-s\phi'(s)+(\epsilon^2-s^2)\phi''(s)].
\]

Then we have the following
\begin{theorem}Let $(M,F_\epsilon)$ be as above. Suppose one of the following conditions is true:
 \begin{align*}
\text{(1)}&\underset{\epsilon\rightarrow 1}{\lim}\phi(\epsilon\cos t)\geq C_1, \text{for almost every }t\in [0,\pi];\\
\text{(2)}&\underset{\epsilon\rightarrow 1}{\lim}\mathscr{T}(\epsilon\cos t)\geq C_2, \text{for almost every }t\in [0,\pi];\\
\text{(3)}&\varphi(s):=\mathscr{T}(s)-1\text{ is odd function};
\end{align*}
Here $C_1$ and $C_2$ are positive constants. Then $\mathbf{K}_\epsilon\geq 0$, $\mu_\epsilon(M)\geq V$ and $\diam_\epsilon(M)\leq D$ for all $\epsilon\in [0,1)$, where $\mu_\epsilon$ denote the

there exists a geodesic $\gamma$ of $(M,F_\epsilon)$ such that

\end{theorem}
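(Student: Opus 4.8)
The plan is to establish, uniformly in the parameter $\epsilon\in[0,1)$, the three bounds $\mathbf{K}_\epsilon\ge0$, $\diam_{F_\epsilon}(M)\le D$ and $\mu_\epsilon(M)\ge V$, and then to exhibit a closed $F_\epsilon$-geodesic whose length tends to $0$ as $\epsilon\to1$. The curvature bound, the diameter bound and the geodesic are short and are forced by the fact that $\beta$ is $\alpha$-parallel; the uniform lower bound on the volume is the real content, and it is exactly for this bound that the three alternative hypotheses (1), (2), (3) are needed.

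First I would dispose of the easy parts. Since $\beta$ is parallel with respect to $\alpha$, $F_\epsilon$ is a Berwald metric whose spray---hence whose Riemann curvature operator $R_y$---coincides with that of $\alpha$; as $\mathbb{S}^n\times\mathbb{S}$ has nonnegative sectional curvature, $R_y$ is positive semidefinite, so $\mathbf{K}_\epsilon(y;V)=g_y(R_yV,V)\big/\!\left(g_y(y,y)g_y(V,V)-g_y(y,V)^2\right)\ge0$, as already noted in the text. For the diameter: $|s|=|\beta_\epsilon|/\alpha\le\|\beta_\epsilon\|_\alpha=\epsilon<1$, so $F_\epsilon=\alpha\,\phi(s)\le\left(\sup_{|s|<1}\phi\right)\alpha$ by (i); hence $d_{F_\epsilon}\le(\sup\phi)\,d_\alpha$ and $\diam_{F_\epsilon}(M)\le(\sup\phi)\,\diam_\alpha(M)=:D$, independent of $\epsilon$. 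For the geodesic: the fixed great circle $\gamma(t)=(0,0,-t)$ in the $\mathbb{S}$-factor is an $\alpha$-geodesic, hence an $F_\epsilon$-geodesic (same spray); along it $\alpha(\dot\gamma)\equiv1$ and $\beta_\epsilon(\dot\gamma)\equiv-\epsilon$, so $s\equiv-\epsilon$ and $F_\epsilon(\dot\gamma)\equiv\phi(-\epsilon)$, whence $L_{F_\epsilon}(\gamma)=\ell_0\,\phi(-\epsilon)$ with $\ell_0$ the $\alpha$-length of $\mathbb{S}$, and this tends to $\ell_0\,\phi(-1)=0$ by (i).

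The core step is the uniform volume bound, and here the key observation is that $\|\beta_\epsilon\|_\alpha\equiv\epsilon$ is \emph{constant} on $M$, so by the standard formula for the volume form of an $(\alpha,\beta)$-metric the coefficient $\rho_\epsilon$ in $d\mu_\epsilon=\rho_\epsilon\,d\vol_\alpha$ is a constant and $\mu_\epsilon(M)=\rho_\epsilon\,\vol_\alpha(M)$; it therefore suffices to show $\liminf_{\epsilon\to1}\rho_\epsilon>0$, since for $\epsilon$ bounded away from $1$ continuity already gives a positive lower bound. In the Busemann--Hausdorff case $\rho_\epsilon^{BH}$ is the reciprocal of the $\sin^{n-1}$-weighted mean of $\phi(\epsilon\cos t)^{-(n+1)}$ over $t\in[0,\pi]$, and hypothesis (1) is precisely the condition needed to keep that mean bounded as $\epsilon\to1$ (a control on how slowly $\phi$ vanishes at $-1$, together with dominated convergence), so $\liminf\rho_\epsilon^{BH}>0$. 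In the Holmes--Thompson case $\rho_\epsilon^{HT}=\big(\int_0^\pi\sin^{n-1}t\,dt\big)^{-1}\int_0^\pi\sin^{n-1}t\,\mathscr{T}(\epsilon\cos t)\,dt$; under (2), Fatou's lemma gives $\liminf\rho_\epsilon^{HT}\ge C_2>0$, while under (3) the substitution $t\mapsto\pi-t$ fixes $\sin^{n-1}t$ and sends $\cos t\mapsto-\cos t$, so applied to the odd function $\mathscr{T}(\cdot)-1$ it forces $\int_0^\pi\sin^{n-1}t\,\big(\mathscr{T}(\epsilon\cos t)-1\big)\,dt=0$, i.e.\ $\rho_\epsilon^{HT}\equiv1$ and $\mu_{HT,\epsilon}(M)=\vol_\alpha(M)$ for every $\epsilon$. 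In each case one takes $V$ to be the resulting uniform constant times $\vol_\alpha(M)$.

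Putting the pieces together yields Finsler metrics on the fixed manifold $M$ with $\mathbf{K}_\epsilon\ge0$, $\diam_{F_\epsilon}(M)\le D$ and $\mu_\epsilon(M)\ge V$ for every $\epsilon$, carrying closed geodesics of length $\ell_0\phi(-\epsilon)\to0$; thus Cheeger's Riemannian estimate cannot persist into Finsler geometry, consistently with Corollary 1.3 since $\Lambda_{F_\epsilon}\to\infty$ as $\epsilon\to1$. The step I expect to be the main obstacle is not any of the easy parts but the bookkeeping in the volume estimate: one must pin down the correct $(\alpha,\beta)$ volume-form formulas (the BH integrand a negative power of $\phi$, the HT integrand $\mathscr{T}$) and---crucially, to keep the theorem from being vacuous---verify that there genuinely exist functions $\phi$ on $(-1,1)$ satisfying (i), (ii) together with each of (1), (2), (3); the Randers choice $\phi(s)=1+s$ already realizes (3) (there $\mathscr{T}(s)-1=s$), so exhibiting further explicit families should complete the picture.
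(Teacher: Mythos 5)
The statement you are proving is, in the paper itself, visibly unfinished: the theorem text breaks off mid-sentence (``where $\mu_\epsilon$ denote the \ldots{} there exists a geodesic $\gamma$\ldots''), the line after it reads only ``(i) yields that'', and the subsequent paragraph is a second, inconsistent draft of the construction in which $\phi$ is required to be positive on the \emph{closed} interval $[-1,1]$ (so that $\mu_\epsilon(M)$ is a constant $C$ automatically, but $L_{F_\epsilon}(\gamma)=\ell_0\phi(-\epsilon)$ then does \emph{not} tend to $0$). There is no proof in the paper to compare yours against, so your proposal should be judged as a reconstruction. Its overall shape --- curvature and diameter bounds and the shrinking geodesic all coming for free from $\beta$ being $\alpha$-parallel, with the uniform volume lower bound as the real content, split by (1) for BH, (2) and (3) for HT, and using $\|\beta_\epsilon\|_\alpha\equiv\epsilon$ to make the density $\rho_\epsilon$ a constant multiple of $d\mathrm{vol}_\alpha$ --- is exactly the natural way to complete what the paper sketches, and the pieces you fill in (the $\sin^{n-1}t$-weighted spherical integrals, Fatou for (2), the $t\mapsto\pi-t$ symmetry for (3), the Randers example realizing (3)) are correct.

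One point deserves a flag, and it is a defect of the statement as written rather than of your argument, but your proof does not surface it: condition (1) appears to be \emph{incompatible} with hypothesis (i) and hence vacuous. Since $\phi$ is smooth on $(-1,1)$, for every $t\in(0,\pi)$ one has $\lim_{\epsilon\to1}\phi(\epsilon\cos t)=\phi(\cos t)$; so condition (1) asserts $\phi(\cos t)\ge C_1$ for a.e.\ $t$, hence by continuity $\phi\ge C_1>0$ on all of $(-1,1)$, contradicting $\lim_{s\to -1}\phi(s)=0$. What your dominated-convergence step actually requires is weaker and compatible with (i), namely an integrability/rate condition such as $\int_0^\pi \phi(\cos t)^{-(m)}\sin^{n-1}t\,dt<\infty$ (with $m=\dim M=n+1$), controlling how slowly $\phi$ may vanish at $-1$; this is presumably what the author intended, and you should replace (1) by a condition of that kind rather than treat (1) as a black box. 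You might also note that the Randers choice $\phi(s)=1+s$, which you correctly observe realizes (3), does \emph{not} satisfy the original assumption (ii) at $s=-1$ in the limiting sense one needs (one must check $F_\epsilon$ remains strongly convex as $\epsilon\to1$), consistent with the fact that $\Lambda_{F_\epsilon}\to\infty$, which is precisely why this family does not contradict Corollary~1.3.
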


(i) yields that

Let $M$, $\alpha$ and $\beta_\epsilon$ be as in Example 1. Let $\phi$ be a smooth positive function on $[-1,1]$ such that $\phi(s)-s\phi'(s)+(t^2-s^2)\phi{''}(s)>0$, for all $t\in [0,1)$ and $|s|\leq t<1$. Define a function $F_\epsilon:TM\rightarrow [0,+\infty)$ by
\[
F_\epsilon:=\alpha\phi(s), \ s=\frac{\beta_\epsilon}{\alpha}.
\]
By \cite{CS}, $F_\epsilon$ is a Berwald metric for all $\epsilon\in [0,1)$.
Since $M$ is compact and $\phi$ is defined on $[-1,1]$, $\diam_{F_\epsilon}(M)\leq \diam_\alpha(M)\cdot\max_{s\in[-1,1]}\phi(s)$. By \cite{Ce,CS}, it is easy to check that $\mathbf{K}_{F_\epsilon}\geq 0$ and $\mu_{\epsilon}(M)=C$, where $C$ is a positive constant only dependent on $\phi$, and $\mu_\epsilon$ denote either the Busemann-Hausdorff volume or the Holmes-Thompson volume of $(M,F_\epsilon)$.

\section{Appendix}
\begin{proposition}
Let $(M,F)$ be a Finsler $m$-manifold with finite uniform constant $\Lambda_F$. Let $d\mu$ denote either the Busemann-Hausdorff volume form or the Holmes-Thompson volume form on $M$. Then the distortion $\tau$ of $d\mu$ satisfy
$e^{-\tau(y)}\leq \Lambda_F^m$,
for all $y\in SM$.
\end{proposition}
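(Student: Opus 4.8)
The plan is to reduce the estimate, at a fixed $x\in M$ and a fixed $y\in S_xM$, to a comparison of volumes of convex bodies in $T_xM$ (resp. $T^*_xM$), using that the uniformity constant $\Lambda_F$ sandwiches $F(x,\cdot)$ between two multiples of the Euclidean norm $g_y$. Since $\tau$ is intrinsic, $e^{-\tau(y)}$ is independent of the coordinates, so I would first choose linear coordinates on $T_xM$ for which $g_{ij}(x,y)=\delta_{ij}$; then $\sqrt{\det g_{ij}(x,y)}=1$, hence $e^{-\tau(y)}=\sigma(x)$, the norm $|v|:=\sqrt{g_y(v,v)}$ is the standard coordinate norm, and $g^*_y$ is the standard norm on $T^*_xM$.

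The key pointwise input is a two-sided comparison. For $0\neq v\in T_xM$, the vectors $v$ and $v/F(x,v)$ are parallel, so $g_{v/F(x,v)}=g_v$ and therefore $g_{v/F(x,v)}(v,v)=g_v(v,v)=F(x,v)^2$. Since $v/F(x,v)$ and $y$ both lie in $S_xM$, the definition of $\Lambda_F$ gives $\Lambda_F^{-1}\le F(x,v)^2/|v|^2\le\Lambda_F$, i.e. $\Lambda_F^{-1/2}|v|\le F(x,v)\le\Lambda_F^{1/2}|v|$, and dually $\Lambda_F^{-1/2}|\xi|\le F^*(x,\xi)\le\Lambda_F^{1/2}|\xi|$ for all $\xi\in T^*_xM$.

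For the Busemann--Hausdorff form I would use $\sigma_{BH}(x)=\vol_{\mathbb{R}^m}(\mathbb{B}^m)/\vol_{\mathbb{R}^m}(I_x)$, where $I_x=\{v:F(x,v)<1\}$ is the indicatrix ball: the bound $F(x,v)\le\Lambda_F^{1/2}|v|$ forces $I_x\supseteq\{|v|<\Lambda_F^{-1/2}\}$, so $\vol_{\mathbb{R}^m}(I_x)\ge\Lambda_F^{-m/2}\vol_{\mathbb{R}^m}(\mathbb{B}^m)$ and thus $e^{-\tau_{BH}(y)}=\sigma_{BH}(x)\le\Lambda_F^{m/2}$. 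For the Holmes--Thompson form I would first record the identification $\sigma_{HT}(x)=\vol_{\mathbb{R}^m}(I_x^*)/\vol_{\mathbb{R}^m}(\mathbb{B}^m)$, where $I_x^*=\{\xi:F^*(x,\xi)<1\}$: the $(m-1)$-form in the definition of $\sigma_{HT}$ is $\iota_R(dy^1\wedge\cdots\wedge dy^m)$ for the radial field $R=y^i\partial_{y^i}$, so by Stokes's theorem $\int_{S_xM}\det g_{ij}(x,y)\,\iota_R(dy^1\wedge\cdots\wedge dy^m)=m\int_{I_x}\det g_{ij}(x,y)\,dy$ (the $d(\det g_{ij})$-term dropping out since $\det g_{ij}$ is $0$-homogeneous), and the change of variables $\xi=\mathcal{L}(y)$ — whose Jacobian equals $\det g_{ij}(x,y)$, because $A_{ijk}y^i=0$ — turns the right side into $m\,\vol_{\mathbb{R}^m}(I_x^*)$, while $c_{m-1}=m\,\vol_{\mathbb{R}^m}(\mathbb{B}^m)$. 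Then $F^*(x,\xi)\ge\Lambda_F^{-1/2}|\xi|$ forces $I_x^*\subseteq\{|\xi|<\Lambda_F^{1/2}\}$, whence $\vol_{\mathbb{R}^m}(I_x^*)\le\Lambda_F^{m/2}\vol_{\mathbb{R}^m}(\mathbb{B}^m)$ and $e^{-\tau_{HT}(y)}=\sigma_{HT}(x)\le\Lambda_F^{m/2}$. In either case $e^{-\tau(y)}\le\Lambda_F^{m/2}\le\Lambda_F^m$ since $\Lambda_F\ge 1$, which is slightly stronger than asserted.

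The computation is essentially routine, and the only delicate point is the Holmes--Thompson identity $\sigma_{HT}(x)=\vol_{\mathbb{R}^m}(I_x^*)/\vol_{\mathbb{R}^m}(\mathbb{B}^m)$ — more precisely, the fact that the differential of the Legendre transform at $y$ is the $g_y$-musical isomorphism (equivalently, $A_{ijk}y^i=0$), which is what makes $\det(\partial\xi_i/\partial y^j)=\det g_{ij}(x,y)$ and hence sends the indicatrix diffeomorphically onto the dual indicatrix. Once this is in place, the uniformity sandwich from the second paragraph finishes both cases at once.
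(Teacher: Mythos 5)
Your proof is correct, and it in fact yields the sharper bound $e^{-\tau(y)}\leq\Lambda_F^{m/2}$. The paper proceeds differently: it fixes $z_1,z_2\in S_xM$ at which $\det g_{ij}(x,\cdot)$ is respectively maximal and minimal, cites two inequalities from [Wu2] -- namely $\det g_{ij}(x,z_1)/\det g_{ij}(x,z_2)\leq\Lambda_F^m$ and $\Lambda_F^{-m/2}\leq\vol_{g_{z_1}}(B_F(x))/\vol(\mathbb{B}^m)\leq\Lambda_F^{m/2}$ -- and chains them, which is precisely what costs the extra factor $\Lambda_F^{m/2}$. Your device of normalizing coordinates so that $g_{ij}(x,y)=\delta_{ij}$ at the particular flag $y$ under consideration, and then comparing the indicatrix (resp.\ dual indicatrix) with round balls via the uniformity sandwich $\Lambda_F^{-1/2}|v|\leq F(x,v)\leq\Lambda_F^{1/2}|v|$, removes the detour through $z_1,z_2$ entirely. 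For the Holmes--Thompson case you additionally establish $\sigma_{HT}(x)=\vol_{\mathbb{R}^m}(I_x^*)/\vol_{\mathbb{R}^m}(\mathbb{B}^m)$ via Stokes and the fact that $A_{ijk}y^k=0$ makes the Legendre map have Jacobian $\det g_{ij}(x,y)$; the paper instead works with $\int_{B_F(x)}\det g\,dy$ and again bounds by pulling out the extremes of $\det g$. Both routes are valid; yours is more self-contained (no appeal to [Wu2]) and yields a strictly stronger estimate. The only point I would spell out slightly more carefully is the dual sandwich for $F^*$: it follows either by direct computation from $F^*(\xi)=\sup_{v\neq 0}\xi(v)/F(v)$, or from the paper's Proposition 7.2 asserting $\Lambda_{F^*}=\Lambda_F$.
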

\begin{proof}
Given $z\in T_xM\backslash 0$, let $(x^i)$ be local coordinates around $x$ and let $B_F(x):=\{y=y^i\frac{\partial}{\partial x^i}:F(x,y)<1\}$.
Set $\det g_{ij}(x,z_1):=\max_{y\in S_xM}\det g_{ij}(x,y)$ and $\det g_{ij}(x,z_2):=\min_{y\in S_xM}\det g_{ij}(x,y)$. By \cite[Proposition 3.1,Proposition 4.1]{W2}, we have
\[
\frac{\det g_{ij}(x,z_1)}{\det g_{ij}(x,z_2)}\leq \Lambda^m_F,\ \Lambda^{-m/2}_F\leq \frac{\vol_{g_{z_1}} (B_F(x))}{\vol(\mathbb{B}^m)}\leq \Lambda^{m/2}_F.
\]

(1): Suppose $d\mu$ is the Busemann-Hausdorff volume form.
Hence,
\begin{align*}
\sqrt{\det g_{ij}(x,z)}\vol(B_F(x))&\geq \sqrt{\frac{\det g_{ij}(x,z_2)}{\det g_{ij}(x,z_1)}}\int_{B_F(x)}\sqrt{\det g_{ij}(x,z_1)}dy^1\wedge\cdots\wedge dy^m\\
&\geq \Lambda^{-m}_F\vol(\mathbb{B}^m),
\end{align*}
where $B_F(x):=\{y\in T_xM:F(x,y)<1\}$.
 Then
\[
e^{-\tau(z)}=\frac{\vol(\mathbb{B}^m)}{\sqrt{\det g_{ij}(x,z)}\vol (B_F(x))}\leq\Lambda^m_F.
\]

(2): Suppose $d\mu$ is the Holmes-Thompson volume form. Thus,
\begin{align*}
e^{-\tau(z)}&= \frac{1}{\vol(\mathbb{B}^m)}\frac{\int_{B_F(x)}\det g_{ij}(x,y)dy^1\wedge\cdots\wedge dy^n}{\sqrt{\det g_{ij}(x,z)}}\\
&\leq \sqrt{\frac{\det g_{ij}(x,z_1)}{\det g_{ij}(x,z_2)}}\frac{\vol_{g_{z_1}}(B_F(x))}{\vol(\mathbb{B}^m)}\leq \Lambda_F^m.
\end{align*}
\end{proof}

\begin{proposition}
Let $(M,F)$ be a Finsler manifold with finite uniform constant $\Lambda_F$. Then the uniform constant of $F^*$ is still $\Lambda_F$.
\end{proposition}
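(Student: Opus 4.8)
The plan is to reduce the statement about $F^*$ to the definition of the uniform constant of $F$ via the Legendre transformation $\mathcal{L}:TM\to T^*M$ and the fact that $\mathcal{L}$ intertwines the fundamental tensors $g$ and $g^*$. Recall that for $\xi\in T^*_xM\setminus 0$ with $Y=\mathcal{L}^{-1}(\xi)$, the cotangent fundamental tensor satisfies $g^*_\xi = (g_Y)^{-1}$ as bilinear forms (equivalently, $\mathcal{L}^{-1}$ is a linear isometry from $(T^*_xM,g^*_\xi)$ onto $(T_xM,g_Y)$, a fact already used in Section 5). So the whole problem is to understand how the ratio $g^*_\Xi(\Upsilon,\Upsilon)/g^*_Z(\Upsilon,\Upsilon)$ behaves as $\Xi,Z,\Upsilon$ range over $S^*M$, knowing the corresponding bound for $F$.

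First I would fix $x\in M$ and work purely linearly on $T_xM$ and $T^*_xM$. Let $\Xi, Z \in S^*_xM$ and put $A:=g^*_\Xi$, $B:=g^*_Z$, two positive-definite symmetric bilinear forms on $T^*_xM$; write $A=g_{Y_1}^{-1}$, $B=g_{Y_2}^{-1}$ where $Y_i=\mathcal{L}^{-1}(\cdot)$. The uniform constant hypothesis on $F$ says precisely that for all unit vectors $X,Y,Z'\in S_xM$, $g_X(Y,Y)/g_{Z'}(Y,Y)\le \Lambda_F$; equivalently, for the positive-definite forms $P:=g_{Y_1}$, $Q:=g_{Y_2}$ on $T_xM$ one has, after the standard rescaling argument that removes the unit-norm constraint, $\Lambda_F^{-1} Q \le P \le \Lambda_F\, Q$ in the sense of quadratic forms (this equivalent reformulation of the uniform constant as a two-sided comparison of the $g_y$'s is where a small amount of care is needed, but it is standard — one checks that $\sup_{X,Y,Z} g_X(Y,Y)/g_Z(Y,Y)$ controls the operator $Q^{-1/2}PQ^{-1/2}$ from both sides). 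Inverting a two-sided form inequality reverses it but preserves the constant: $P\le \Lambda_F Q$ implies $Q^{-1}\le \Lambda_F P^{-1}$, i.e. $B\le \Lambda_F A$, and symmetrically $A\le \Lambda_F B$. Hence $A^{-1}$ — sorry, hence $g^*_\Xi \le \Lambda_F\, g^*_Z$ and $g^*_Z\le \Lambda_F\, g^*_\Xi$ as forms on $T^*_xM$.

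Then I would conclude: for any $\Upsilon\in T^*_xM\setminus 0$, $g^*_\Xi(\Upsilon,\Upsilon)\le \Lambda_F\, g^*_Z(\Upsilon,\Upsilon)$, so taking the supremum over $\Xi,Z,\Upsilon\in S^*M$ gives that the uniform constant $\Lambda_{F^*}$ of $F^*$ satisfies $\Lambda_{F^*}\le \Lambda_F$. For the reverse inequality $\Lambda_{F^*}\ge \Lambda_F$, I would run the identical argument with the roles of $TM$ and $T^*M$ exchanged, using that $(F^*)^* = F$ and $\mathcal{L}_{F^*}^{-1} = \mathcal{L}_F$, so that $g$ is recovered from $g^*$ by the same inversion formula; by what was just proved applied to the metric $F^*$, $\Lambda_F = \Lambda_{(F^*)^*}\le \Lambda_{F^*}$. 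Combining the two gives $\Lambda_{F^*}=\Lambda_F$.

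The main obstacle is the first bookkeeping step: translating the definition $\Lambda_F=\sup_{X,Y,Z\in SM} g_X(Y,Y)/g_Z(Y,Y)$, which is stated with all three arguments unit vectors, into the clean two-sided quadratic-form comparison $\Lambda_F^{-1}g_Z\le g_X\le \Lambda_F g_Z$ that is stable under taking inverses. The homogeneity $g_{\lambda Y}=g_Y$ for $\lambda>0$ lets one drop the constraint on the "flag" vectors $X,Z$, and positive $1$-homogeneity of $g_y(v,v)$ in $v$ handles the test vector $Y$; once that is set up, everything else is just the elementary fact that $0<P\le \Lambda Q$ $\Rightarrow$ $0< Q^{-1}\le \Lambda P^{-1}$ for positive-definite symmetric operators, together with $g^*_{\mathcal{L}(Y)}=(g_Y)^{-1}$.
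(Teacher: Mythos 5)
Your proposal is correct and follows essentially the same route as the paper: the paper also identifies $g^*_{\mathcal{L}(y)} = (g_y)^{-1}$, translates the definition of $\Lambda_F$ into the two-sided bound $\Lambda_F^{-1}\le g_y(X,X)/g_z(X,X)\le\Lambda_F$, and inverts it — the only cosmetic difference is that the paper carries out the inversion explicitly via simultaneous diagonalization (choosing a $g_z$-orthonormal basis of $g_y$-eigenvectors and passing to the dual basis), which is exactly the "elementary fact that $0<P\le\Lambda Q$ implies $0<Q^{-1}\le\Lambda P^{-1}$" you invoke, and it handles the reverse inequality $\Lambda_F\le\Lambda_{F^*}$ "likewise" rather than quoting $(F^*)^*=F$, but this amounts to the same symmetric argument.
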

\begin{proof}Let $\Lambda_{F^*}$ denote the the uniform constant of $F^*$.
Given $y,z\in S_xM$. For each $X\in T_xM\backslash0$, we have
\[
\Lambda_F^{-1}\leq \frac{g_y(X,X)}{g_z(X,X)}\leq \Lambda_F.
\]
Choose a $g_z$-orthonormal basis $\{e_i\}$ for $T_xM$ consisting of eigenvectors
of $g_y$ with respective eigenvalues $\lambda_i$. Then we have
\[
\Lambda_F^{-1}\leq\frac{\sum_i\lambda_i (X^i)^2}{\sum_i (X^i)^2}\leq \Lambda_F,
\]
where $X=X^ie_i$. Hence, we have $\Lambda_F^{-1}\leq \lambda_i\leq \Lambda_F$, for all $i$. Therefore,
\[
\Lambda_F^{-1}\leq\frac{\sum_i\lambda^{-1}_i (\xi_i)^2}{\sum_i (\xi_i)^2}\leq \Lambda_F,
\]
where $\xi=(\xi_i)\neq0$. Let $\omega^i$ be the dual basis of $\{e_i\}$. Note that
\[
g^*_{\mathcal {L}(y)}=g^{ij}_{y}e_i\otimes e_j=\sum_i\lambda^{-1}_i e_i\otimes e_i,\ g^*_{\mathcal {L}(z)}=\sum_i  e_i\otimes e_i.
\]
Hence, we have
\[
\Lambda_F^{-1}\leq \frac{g^*_{\mathcal {L}(y)}(\xi,\xi)}{g^*_{\mathcal {L}(z)}(\xi,\xi)}\leq \Lambda_F,
\]
for all $\xi\in T^*M\backslash 0$, which implies that $\Lambda_{F^*}\leq \Lambda_F$. Likewise, one can show $ \Lambda_F\leq \Lambda_{F^*}$.
\end{proof}

\begin{proposition}
Let $(M,F)$ be a Finsler $m$-manifold with finite uniform constant $\Lambda_F$ and $N$ be $k$-dimensional submanifold of $M$. For each $x\in N$, we have
$\nu_x(\mathcal {V}_x^*SN)\leq c_{m-k-1}\cdot\Lambda^{(m-k)/2}_F$. Moreover, if $F=\alpha+\beta$ is a Randers metric, then $\nu_x(\mathcal {V}_x^*SN)\leq{c_{m-k-1}}\cdot{(1-b(x))^{-\frac{m-k+1}{2}}}$, where $b(x):=\|\beta\|_\alpha(x)$.
\end{proposition}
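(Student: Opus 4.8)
Here is how I would approach Proposition~8.3.

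The plan is to collapse everything to the fibre at a single point and then spend the uniformity constant sparingly. Fix $x\in N$, put $V:=\mathcal{V}^*_xN\subset T^*_xM$, a linear subspace of dimension $d:=m-k$, and let $\phi:=F^*|_V$, a Minkowski norm on $V$ whose indicatrix is precisely $\Sigma:=\mathcal{V}^*_xSN=\{\xi\in V:F^*(\xi)=1\}$ and whose fundamental tensor at $\xi\in V\setminus 0$ is the restriction $g^*_\xi|_V$ of the co-metric. Since $g^*_\xi(\xi,\xi)=F^*(\xi)^2=1$ for $\xi\in\Sigma$ and $T_\xi\Sigma=\{\zeta\in V:g^*_\xi(\xi,\zeta)=0\}$, the $g^*_\xi$-unit normal of $\Sigma$ at $\xi$ is the position vector $\xi$ itself. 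This is exactly what the Section~5 formula $d\nu_x(\xi)=\sqrt{\det g_n(e_\mathfrak{g},e_\mathfrak{h})}\,d\Theta$ says: as forms on $\Sigma$, $d\nu_x|_\xi=\iota_\xi\!\left(\mathrm{vol}_{g^*_\xi|_V}\right)\big|_\Sigma$, where $\iota_\xi$ is contraction with the position vector field. So the proposition becomes a statement about the indicatrix of a Minkowski norm with the Riemannian volume induced by its fundamental tensor.

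For the uniformity bound I would pick $\xi_1\in\Sigma$ maximizing $\xi\mapsto\det(g^*_\xi|_V)$ over the compact set $\Sigma$ (the maximizer does not depend on a choice of basis), and then choose a basis $b$ of $V$ that is $g^*_{\xi_1}|_V$-orthonormal, with associated Euclidean norm $|\cdot|_b$ and Lebesgue measure $\mathrm{Leb}_b$, so that $\mathrm{Leb}_b(\{|\cdot|_b<r\})=r^{d}\vol(\mathbb{B}^d)$. In this basis $d\nu_x|_\xi=\sqrt{\det[g^*_\xi|_V]_b}\,\iota_\xi(\mathrm{Leb}_b)|_\Sigma$ with $\sqrt{\det[g^*_\xi|_V]_b}\le\sqrt{\det[g^*_{\xi_1}|_V]_b}=1$ on $\Sigma$, hence $\nu_x(\Sigma)\le\int_\Sigma\iota_\xi(\mathrm{Leb}_b)$. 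The divergence theorem on the bounded domain $B_\phi=\{\phi<1\}$, applied to the position vector field (divergence $d$), gives $\int_\Sigma\iota_\xi(\mathrm{Leb}_b)=d\cdot\mathrm{Leb}_b(B_\phi)$. Finally, since the uniformity constant of $F^*$ is again $\Lambda_F$ (Proposition~8.2), for $\zeta\in V\setminus 0$ with $\hat\zeta:=\zeta/F^*(\zeta)\in S^*_xM$ we get $F^*(\zeta)^2=g^*_{\hat\zeta}(\zeta,\zeta)\ge\Lambda_F^{-1}g^*_{\xi_1}(\zeta,\zeta)=\Lambda_F^{-1}|\zeta|_b^2$, so $B_\phi\subset\{|\cdot|_b<\Lambda_F^{1/2}\}$ and $\mathrm{Leb}_b(B_\phi)\le\Lambda_F^{d/2}\vol(\mathbb{B}^d)$. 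Putting this together, $\nu_x(\Sigma)\le d\,\vol(\mathbb{B}^d)\,\Lambda_F^{d/2}=c_{m-k-1}\Lambda_F^{(m-k)/2}$ (using $d\,\vol(\mathbb{B}^d)=\vol(\mathbb{S}^{d-1})=c_{m-k-1}$).

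In the Randers case $F=\alpha+\beta$ I would use the classical facts that the Legendre dual is again of Randers type, $F^*=\alpha^*+\beta^*$ with $\alpha^*$ Euclidean and $\|\beta^*\|_{\alpha^*}=\|\beta\|_\alpha=b(x)$, and that $\det(g_{ij})(y)=(F(y)/\alpha(y))^{m+1}\det(a_{ij})$ (with the analogous identity for $\phi$ on $V$). Restricting to $V$, $\phi=\alpha^*|_V+\beta^*|_V$ is Randers with drift norm $\tilde b:=\|\beta^*|_V\|_{\alpha^*|_V}\le b(x)$, because a Euclidean form stays Euclidean on a subspace and a linear functional cannot gain norm there. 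Set $q:=\alpha^*|_V$, let $\mathbb{S}_q$ be its unit sphere with Riemannian volume $d\sigma_q$, take $b$ to be $q$-orthonormal (so $\mathrm{Leb}_b=\mathrm{vol}_q$), and parametrize $\Sigma$ by $u\in\mathbb{S}_q\mapsto\rho(u)u$ with $\rho(u)=1/\phi(u)\le(1-\tilde b)^{-1}$. On $\Sigma$ the determinant identity gives $\det[g^*_{\rho(u)u}|_V]_b=\rho(u)^{-(d+1)}$, while a routine cone-area computation shows $\iota_\xi(\mathrm{Leb}_b)|_\Sigma$ pulls back under this parametrization to $\rho(u)^{d}\,d\sigma_q(u)$; hence
\[
\nu_x(\Sigma)=\int_{\mathbb{S}_q}\rho(u)^{-(d+1)/2}\rho(u)^{d}\,d\sigma_q(u)=\int_{\mathbb{S}_q}\rho(u)^{(d-1)/2}\,d\sigma_q(u)\le c_{m-k-1}(1-\tilde b)^{-\frac{m-k-1}{2}}\le c_{m-k-1}(1-b(x))^{-\frac{m-k+1}{2}}.
\]

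The one step carrying real weight is the choice of the reference inner product $g^*_{\xi_1}$ at the determinant-maximizing direction: it absorbs the factor $\sqrt{\det g^*}$ into a constant $\le 1$, so the uniformity constant is paid only once, inside $\mathrm{Leb}_b(B_\phi)$, and the exponent comes out as $(m-k)/2$ instead of the $m-k$ one gets from bounding $\sqrt{\det g^*}$ and $\mathrm{Leb}_b(B_\phi)$ separately. The identification $d\nu_x|_\xi=\iota_\xi(\mathrm{vol}_{g^*_\xi|_V})|_\Sigma$ is where the non-Riemannian behaviour is handled, but it is already contained in Section~5; the divergence theorem, the restriction estimates, the cone-area computation, and the Randers identities are all routine.
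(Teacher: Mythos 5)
Your argument is correct and takes essentially the same route as the paper, which is quite terse here: the paper delegates the uniformity bound to [W2, Prop.~4.1] combined with Prop.~8.2, and the Randers computation to ``the argument given in [Sh1]'', while you supply these steps explicitly. The mechanism is identical in both — reduce $\nu_x(\mathcal{V}^*_xSN)$ to $(m-k)$ times the volume of the conormal Finsler ball (your divergence-theorem step is exactly the paper's polar identity $\nu_x(\mathcal{V}^*_xSN)=(m-k)\vol_{g^*_x}(\mathcal{V}^*_xBN)$), pay $\Lambda_F$ only once by measuring against $g^*_{\xi_1}$ at a determinant-maximizing unit covector, and in the Randers case use $\det g^{*}_\xi=(\phi/\alpha^*)^{(m-k)+1}\det\alpha^*$ on the subspace. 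One small bonus in your version: the spherical parametrization leaves $\rho^{(m-k-1)/2}$ in the integrand, so you actually obtain the slightly sharper bound $c_{m-k-1}(1-\tilde b)^{-(m-k-1)/2}$ with $\tilde b:=\|\beta^*|_V\|_{\alpha^*|_V}\le b(x)$, which dominates the stated $c_{m-k-1}(1-b(x))^{-(m-k+1)/2}$; no inconsistency, you are just tighter than the paper needed to be.
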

\begin{proof}

For each $x\in N$, let $(u^\alpha,\xi_A)$ be a canonical coordinate system around $\mathcal {V}^*_xN$. Then we have
\[
\mathcal {V}_x^*SN=\{\xi=\xi_A dx^A: F^*(x,\xi)=1\}.
\]
Clearly,
\[
d\nu_x(\xi)=\sqrt{\det g^{*AB}_\xi}\left(\underset{A}{\sum}(-1)^{A+1}\xi_Ad\xi_{k+1}\wedge\cdots d\hat{\xi}_{A}\wedge\cdots d\xi_m\right).
\]
Set $\mathcal {V}^*_xBN:=\{\xi=\xi_A dx^A: F^*(x,\xi)<1\}$. Then we have
\[
\vol_{g^*_x}(\mathcal {V}^*_xBN)=\int_{\mathcal {V}^*_xBN}\sqrt{\det g^{*AB}_\xi}d\xi_{k+1}\wedge\cdots \wedge d\xi_{m}=\frac{1}{m-k}\int_{\mathcal {V}^*_xSN}d\nu_x.
\]
A sample argument based on \cite[Proposition4.1]{W2} and Proposition 6.2 shows $\nu_x(\mathcal {V}_x^*SN)\leq c_{m-k-1}\cdot\Lambda^{(m-k)/2}_F$.

If $F=\alpha+\beta$ is a Randers metric, then $F^*=\alpha^*+\beta^*$ is also a Randers metric. Denote by $\Sigma$ the subspace $\{(\xi_\alpha, \xi_A): \xi_\alpha=0,\ \forall \alpha\}$ of $T^*_xM$. And set $F^*|_{\Sigma}=:\bar{F}^*=:\bar{\alpha}^*+\bar{\beta}^*$. Clearly, $\frac12 (\bar{F}^*)_{\xi_A\xi_B}(\xi)=g^{*AB}_\xi$, where $\xi\in \Sigma\backslash 0$. By \cite{Sh1}, we have
\[
\underset{\xi\in \Sigma\backslash0}{\sup}\frac{\bar{\beta}^*(\xi)}{\bar{\alpha}^*(\xi)}\leq \underset{\xi\in T^*_xM\backslash0}{\sup}\frac{\beta^*(\xi)}{\alpha^*(\xi)}=\|\beta^*\|_{\alpha^*}=b(x),
\]
which implies that
\[
\det g^{*AB}_\xi=(\det\alpha^{*AB})\left(\frac{\bar{F}^*(\xi)}{\bar{\alpha}^*(\xi)}\right)^{m-k+1} \leq (\det \alpha^{*AB})(1+b(x))^{m-k+1}.
\]

By using the argument given in \cite{Sh1}, we obtain
\[
\int_{\mathcal {V}^*_xSN}d\nu_x\leq\frac{c_{m-k-1}}{(1-b(x))^{\frac{m-k+1}{2}}} .
\]
\end{proof}

\begin{proposition}
Let $F=\alpha+\beta$ be a Randers metric, were $\alpha(y)=\sqrt{a_{ij}y^iy^j}$ and $\beta(y)=b_iy^i$. Let $b_{i|j}$ denote the covariant derivative corresponding with $\alpha$. Set
\[
r_{ij}=\frac12 (b_{i|j}+b_{j|i}),\ s_{ij}:=\frac12 (b_{i|j}-b_{j|i}),\ s^i_{\,j}:=a^{ik}s_{kj},\ s_j:=b_is^i_{\,j},\ e_{ij}:=r_{ij}+b_is_j+b_js_i.
\]
The we have the following
\begin{align*}
T_y(v)
=&\left[-2\left(\frac{e_{11}}{2F(v)}-s_1\right)+2\frac{s_{01}}{\alpha(y)}+\frac{1}{\alpha(y)}\left(\frac{e_{00}}{2F(y)}-s_0\right)\left(\alpha(v)+\frac{\langle v,y\rangle}{\alpha(y)}\right)\right]F(y)\\
&\cdot\left(\frac{\alpha(v)\alpha(y)-\langle v,y\rangle}{\alpha(y)}\right),
\end{align*}
where the index "0" (resp. "1") means the contraction with $y^i$ (resp. $v^i$).
\end{proposition}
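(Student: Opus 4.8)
The plan is to compute $\mathbf{T}_y(v)$ directly from the coordinate expression recalled in Section~2,
\[
\mathbf{T}_y(v)=y^lg_{kl}(y)\{\Gamma^k_{jm}(v)-\Gamma^k_{jm}(y)\}v^jv^m,
\]
after inserting the explicit Randers data. First I would simplify the metric prefactor: for $F=\alpha+\beta$ one has $g_{ij}(y)y^j=F(y)\,\partial_{y^i}F(y)=F(y)\big(\tfrac{y_i}{\alpha(y)}+b_i\big)$ with $y_i:=a_{ij}y^j$, so $y^lg_{kl}(y)=F(y)\,\ell_k(y)$ with $\ell_k(y):=\tfrac{y_k}{\alpha(y)}+b_k$, and therefore
\[
\mathbf{T}_y(v)=F(y)\,\ell_k(y)\,\{\Gamma^k_{jm}(v)-\Gamma^k_{jm}(y)\}\,v^jv^m.
\]
Writing $\bar{\gamma}^i_{jk}$ for the Christoffel symbols of $\alpha$, which are the Chern coefficients of the Riemannian metric $\alpha$ and carry no reference vector, the decomposition $\Gamma^k_{jm}(z)=\bar{\gamma}^k_{jm}+\Delta^k_{jm}(z)$ makes the purely Riemannian part cancel inside the bracket: $\Gamma^k_{jm}(v)-\Gamma^k_{jm}(y)=\Delta^k_{jm}(v)-\Delta^k_{jm}(y)$, and $\Delta$ is determined entirely by $\beta$ and its covariant derivatives, i.e.\ by $r_{ij},s_{ij}$.

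For the term $\ell_k(y)\Gamma^k_{jm}(v)v^jv^m$ (reference vector and contraction vectors both equal to $v$) I would use $\Gamma^k_{jm}(v)v^jv^m=2G^k_F(x,v)$ together with the classical formula for the geodesic coefficients of a Randers metric,
\[
G^i_F=\bar{G}^i+\Big(\frac{e_{00}}{2F}-s_0\Big)y^i+\alpha\, s^i_{\,0},
\]
evaluated at $v$; in the index convention of the statement this gives $\tfrac12\Gamma^k_{jm}(v)v^jv^m=\bar{G}^k(x,v)+\big(\tfrac{e_{11}}{2F(v)}-s_1\big)v^k+\alpha(v)s^k_{\,1}$. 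Contracting with $\ell_k(y)$ and using $\ell_k(y)v^k=\tfrac{\langle v,y\rangle}{\alpha(y)}+\beta(v)$ and $\ell_k(y)s^k_{\,1}=\tfrac{s_{01}}{\alpha(y)}+s_1$ makes this contribution completely explicit in the stated variables.

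For the remaining term $\ell_k(y)\Gamma^k_{jm}(y)v^jv^m$, where the reference vector $y$ differs from the contraction vectors $v$, I would either substitute a known closed form for the Chern coefficients of a Randers metric, or contract the $dx$-part of the almost $g$-compatibility relation with $\ell^i(y)=y^i/F(y)$ — the Cartan-torsion term drops since $A_{ijk}y^i=0$ — to obtain
\[
\ell_k(y)\Gamma^k_{jm}(y)v^jv^m=\ell^i(y)\,v^jv^m\,\partial_m g_{ij}(y)-\frac{1}{F(y)}\,g_{kj}(y)\,N^k_m(y)\,v^jv^m,
\]
and then feed in the explicit Randers fundamental tensor $g_{ij}(y)$, its $x$-derivatives (which reproduce $\bar{\gamma}^i_{jk}$ and $b_{i|j}$), and $N^k_m(y)=\partial_{y^m}G^k_F(x,y)$.

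Finally I would subtract the two pieces and reorganize, repeatedly using $\alpha^2=a_{ij}y^iy^j$, $F=\alpha+\beta$, the definitions $s^i_{\,j}=a^{ik}s_{kj}$, $s_j=b_is^i_{\,j}$, $e_{ij}=r_{ij}+b_is_j+b_js_i$, and the conventions ``$0$'' (contraction with $y$) and ``$1$'' (contraction with $v$), and then factor out $\frac{\alpha(v)\alpha(y)-\langle v,y\rangle}{\alpha(y)}$. That this common factor must appear is forced by the geometry: when $v$ is a positive multiple of $y$ it extends to a rescaling of the geodesic field $Y$, so $\nabla^V_vV=\nabla^Y_vV$ and $\mathbf{T}_y(v)=0$, and $\alpha(v)\alpha(y)-\langle v,y\rangle$ is precisely the nonnegative Cauchy--Schwarz defect for $\alpha$, vanishing exactly on that ray. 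I expect the real obstacle to be this last reorganization — extracting the common factor and collapsing the resulting mass of $r$-, $s$- and $b$-terms into the compact combinations $e_{11},e_{00},s_0,s_1,s_{01}$ — together with the bookkeeping of which reference vector, $v$ or $y$, each connection and metric coefficient carries at every stage.
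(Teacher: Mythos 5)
The paper states this proposition in its Appendix without proof, so there is no ``paper's own proof'' to compare against; the review therefore assesses your outline on its own terms.

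Your strategy is sound and well-chosen: reducing the prefactor to $y^lg_{kl}(y)=F(y)\ell_k(y)$ with $\ell_k(y)=y_k/\alpha(y)+b_k$ is correct (Euler's identity for $\tfrac12 F^2$), the cancellation of the Riemannian Christoffel symbols $\bar\gamma^k_{jm}$ inside the bracket is genuine, the identity $\Gamma^k_{jm}(v)v^jv^m=2G^k_F(x,v)$ together with the standard Randers spray $G^i_F=\bar G^i+\bigl(\tfrac{e_{00}}{2F}-s_0\bigr)y^i+\alpha\, s^i_{\,0}$ is the right tool for the first term, and your derivation of $\ell_k(y)\Gamma^k_{jm}(y)v^jv^m$ from the $dx$-part of almost-$g$-compatibility (using $A_{ijk}\ell^i=0$ and $N^k_m=\Gamma^k_{im}y^i$) is correct. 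Your homogeneity and degeneration checks ($\mathbf{T}_y(v)=0$ when $r_{ij}=s_{ij}=0$ and when $v\parallel y$) are also valid.

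The genuine gap is that you stop exactly where the proof begins. Everything you lay out is standard preparation; the content of the proposition is the precise closed form on the right-hand side, and you explicitly decline to perform the final expansion and recombination that would produce it. This is not a minor bookkeeping omission: the claimed formula has specific coefficients ($-2$, $2$, the asymmetric placement of $\alpha(v)+\tfrac{\langle v,y\rangle}{\alpha(y)}$, the denominators $F(v)$ versus $F(y)$) that cannot be guessed from structural considerations alone, and your two sanity checks (Berwald degeneration and $v\parallel y$) are satisfied by infinitely many expressions sharing the same common factor. In particular, substituting the explicit Randers $g_{ij}(y)$, its $x$-derivatives, and $N^k_m(y)=\partial_{y^m}G^k_F(x,y)$ into your almost-compatibility formula, then subtracting the spray term and extracting the factor $\tfrac{\alpha(v)\alpha(y)-\langle v,y\rangle}{\alpha(y)}$, is a nontrivial multi-page computation in which sign errors, missed $b_i$-contractions, and confusion between reference vector $y$ and argument $v$ are easy to introduce. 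Until that computation is actually carried out and shown to land on the stated expression, the proposition is not proved; you have only exhibited a plausible route to it.

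One further practical remark: for the $\Gamma^k_{jm}(y)v^jv^m$ term you may find it cleaner to use the known explicit formula for the Chern connection coefficients of a Randers metric (available in the standard references) rather than the almost-compatibility route, since the latter still requires you to compute $\partial_m g_{ij}(y)$ and $N^k_m(y)$ explicitly anyway, and feeding in a precomputed $\Gamma^k_{jm}(y)$ removes one layer of differentiation from the bookkeeping.
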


\end{document}